\journal{Journal of Multivariate Analysis}
\newtheorem{theorem}{Theorem}
\newtheorem{lemma}{Lemma}
\newtheorem*{lemma*}{Lemma}
\newtheorem*{assumption*}{Partial Exchangeability Assumption (PEA)}
\newtheorem{proposition}{Proposition}
\newtheorem*{proposition*}{Proposition}
\newtheorem{corollary}{Corollary}
\newtheorem*{example*}{Example}
\newtheorem{example}{Example}
\newtheorem{remark}{Remark}
\newtheorem{definition}{Definition}
\def\d{\mathrm{d}}
\providecommand{\bs}[1]{\boldsymbol{#1}}
\providecommand{\bss}[1]{\boldsymbol{\mathrm{#1}}}
\providecommand{\Id}[1]{\boldsymbol{\mathrm{I}_{#1}}}
\def\t{\boldsymbol{\tau}}
\def\th{\boldsymbol{\hat{\tau}}}
\def\tt{\boldsymbol{\tilde{\tau}}}
\def\eqdis{\stackrel{\mbox{\em\tiny d}}{=}}
\def\S{\boldsymbol{\Sigma}}
\def\Sh{\boldsymbol{\hat{\Sigma}}}
\def\St{\boldsymbol{\tilde{\Sigma}}}
\def\Ta{\boldsymbol{\Theta}}
\def\T{\boldsymbol{\mathrm{T}}}
\def\R{\boldsymbol{\mathrm{R}}}
\def\Tt{\boldsymbol{\tilde{\mathrm{T}}}}
\def\Th{\boldsymbol{\hat{\mathrm{T}}}}
\def\D{\boldsymbol{\Delta}}
\def\G{\boldsymbol{\Gamma}}
\def\B{\boldsymbol{\mathrm{B}}}
\def\U{\boldsymbol{\mathrm{U}}}
\def\X{\boldsymbol{\mathrm{X}}}
\def\SS{\boldsymbol{\mathrm{S}}}
\providecommand{\E}{\rm{E}}
\DeclareMathOperator*{\argmin}{\arg\,\min}
\def\namedlabel#1#2{\begingroup
   \def\@currentlabel{#2}%
   \label{#1}\endgroup
}
\begin{document}

\begin{frontmatter}

\title{Detection of block-exchangeable structure in large-scale correlation matrices}

\author[A1]{Samuel Perreault\corref{mycorrespondingauthor}}
\author[A1]{Thierry Duchesne}
\author[A2]{Johanna G. Ne\v{s}lehov\'a}

\address[A1]{D{\'e}partement de math{\'e}matiques et de statistique, Universit{\'e} Laval, Qu{\'e}bec, Canada}
\address[A2]{Department of Mathematics and Statistics, McGill University, Montr{\'e}al, Canada}

\cortext[mycorrespondingauthor]{Corresponding author. Email address: \url{samuel.perreault.3@ulaval.ca}}
  
\begin{abstract}
Correlation matrices are omnipresent in multivariate data analysis. When the number $d$ of variables is large, the sample estimates of correlation matrices are typically noisy and conceal underlying dependence patterns. We consider the case when the variables can be grouped into $K$ clusters with exchangeable dependence; this assumption is often made in applications, e.g., in finance and econometrics. Under this partial exchangeability condition, the corresponding correlation matrix has a block structure and the number of unknown parameters is reduced from $d(d-1)/2$ to at most $K(K+1)/2$. We propose a robust algorithm based on Kendall's rank correlation to identify the clusters without assuming the knowledge of $K$ a priori or anything about the margins except continuity. The corresponding block-structured estimator performs considerably better than the sample Kendall rank correlation matrix when $K < d$. The new estimator can also be much more efficient in finite samples even in the unstructured case $K = d$, although there is no gain asymptotically. When the distribution of the data is elliptical, the results extend to linear correlation matrices and their inverses. The procedure is illustrated on financial stock returns.
\end{abstract}

\begin{keyword}
Agglomerative clustering \sep
Constrained maximum likelihood \sep 
Copula \sep 
Kendall's tau \sep
Parameter clustering \sep
Shrinkage.
\MSC[2010] Primary 62E10 \sep 
Secondary 62F30 \sep
62H30
\end{keyword}

\end{frontmatter}

\section{Introduction} \label{sec:intro}

Relationships between the components of a random vector $\X = (X_1,\dots, X_d)$ are of prime interest in many fields where statistical methods are used. Traditionally, this dependence  is summarized through a correlation matrix. When $\X$ is multivariate Normal, the classical choice is the linear correlation matrix. When multivariate Normality fails, as is frequent, e.g., in risk management, linear correlation can be grossly misleading and may not even exist \citep{Embrechts/McNeil/Straumann:2002}. For this reason, it is safer to use a rank correlation matrix such as  the matrix of  pair-wise Kendall's taus or Spearman's rhos.

In high dimensions, empirical correlation matrices typically conceal underlying dependence patterns. This is due to their sheer size and to the inherent imprecision of the estimates, especially when the sample size is small compared to dimension~$d$. 
For example, consider the log-returns of $107$ stocks included in the NASDAQ100 index from January 1 to September 30, 2017, giving $187$ observations. Hardly any pattern is visible in the left panel of Figure~\ref{fig:stocks-matrices}, which shows the empirical Kendall rank correlation matrix based on residuals from a fitted stochastic volatility model.

\begin{figure}[t]
	\centering
	\begin{minipage}{.05\textwidth}
		\centering
	   	\includegraphics[height=0.2\textheight]{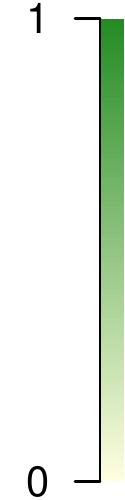}\\
	\end{minipage}
	\begin{minipage}{.9\textwidth}
		\centering
		\includegraphics[width=.28\textwidth]{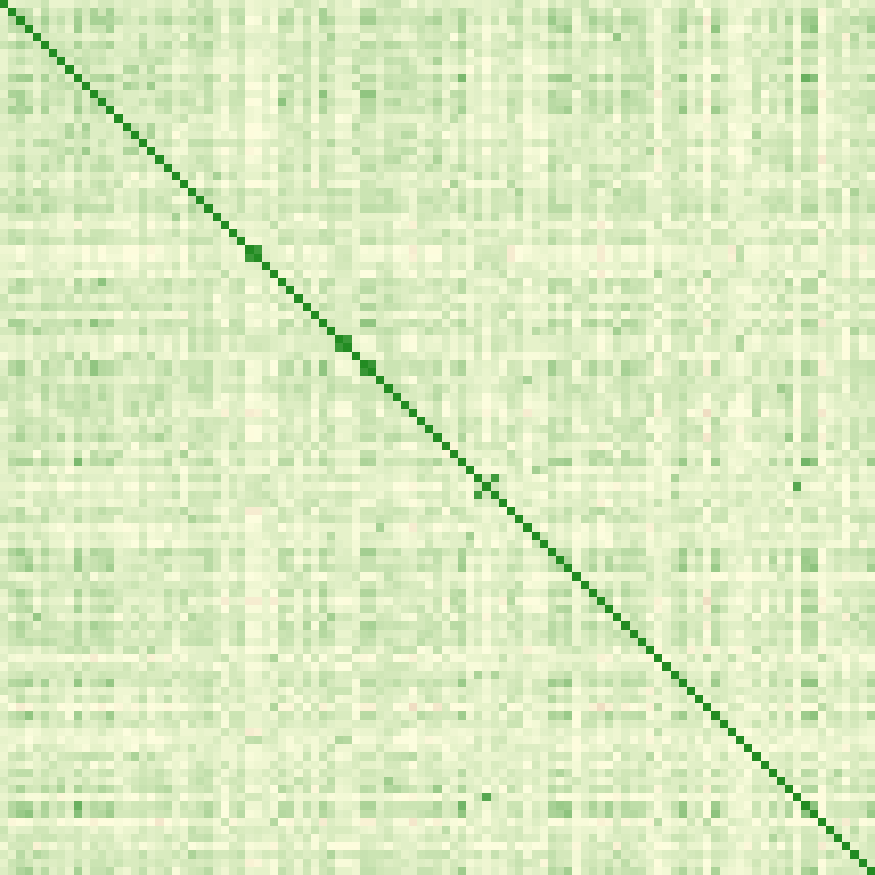} 
		\hspace{1mm}
		\includegraphics[width=.28\textwidth]{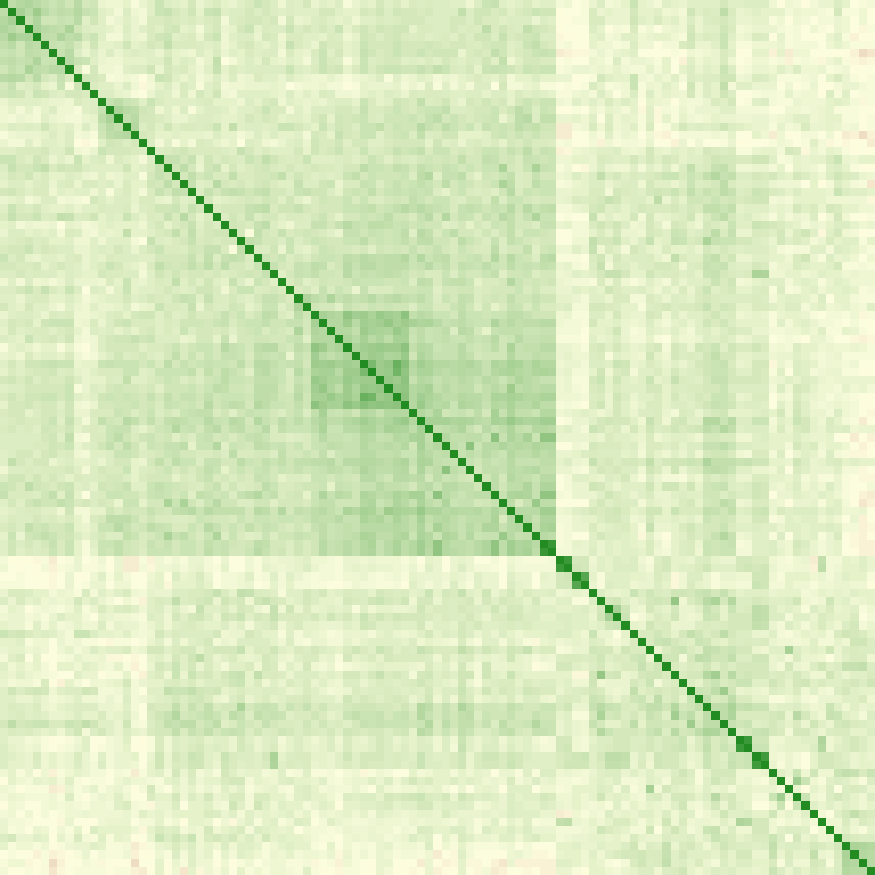}
		\hspace{1 mm}
		\includegraphics[width=.28\textwidth]{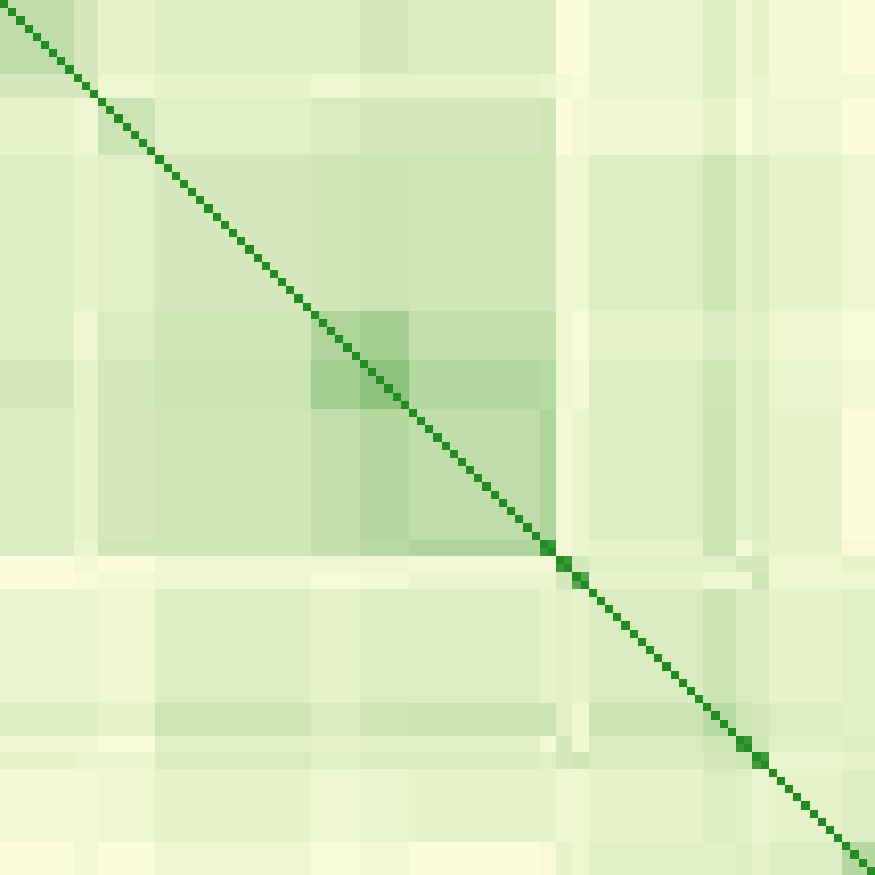}
	\end{minipage}
\caption{The empirical Kendall's tau matrix of $107$ stocks included in the NASDAQ100 index in the original labeling (left) and after relabeling (middle). The right panel shows the improved estimate obtained from Algorithm \ref{algo:path} and structure selection with $\alpha = .5$.}
\label{fig:stocks-matrices}
\end{figure}

Noisiness of sample correlation matrices is well documented. Several strategies have been proposed to remedy for it, most notably shrinkage \citep{Ledoit/Wolf:2004,Schafer/Strimmer:2005}. Alternative procedures developed in the context of graphical models consist of decomposing a noisy inverse covariance matrix into a low-rank matrix and a sparse matrix \citep{Chandrasekaran/Parrilo/Willsky:2012, Ma/Xue/Zou:2013,Agarwal/Negahban/Wainwright:2012}.

We follow a different path in this article. Motivated by the above NASDAQ example and many others, we focus on applications in which it makes sense to assume that the correlation matrix has a block structure. By this we mean that the variables can be grouped into $K$ disjoint clusters in such a way that for any two clusters $A$ and $B$, and any $X_i \in A$ and $X_j \in B$, the correlation between $X_i$ and $X_j$ satisfies $\rho(X_i, X_j) = \rho_{AB}$. In other words, all variables within each cluster are equicorrelated and the between-cluster correlation depends only on the clusters but not the representatives. This assumption is a way to reduce {the number of unknown pair-wise correlations} from $d(d-1)/2$ to at most $K(K+1)/2$. Correlation matrices with a block structure occur in portfolio and credit risk modeling, where variables can be grouped by industry classifications or risk types; see, e.g., the block-structured DECO model \citep{Engle/Kelly:2012}.  They also arise in the modeling of categorical, clustered, or gene expression data, and in genomics studies. In the NASDAQ100 example, a block structure emerges upon relabeling of the variables, as shown in the middle panel of Figure~\ref{fig:stocks-matrices}, though it is still noisy.

This article describes a technique for learning the cluster structure from data and shows how to use the latter to devise a more efficient estimator of the correlation matrix. No prior knowledge of the clusters, their number or composition is assumed. We only require that
the dependence within each cluster is exchangeable. The procedure we propose is an iterative algorithm  that is similar to, but different from, agglomerative clustering.  In contrast to model-based clustering which aims to cluster together observations from the same subpopulation of a multivariate mixture distribution, the current proposal aims at identifying elements of a correlation matrix that are equal. The algorithm also outputs an improved estimate of the correlation matrix which has a block structure, and an estimate of its asymptotic covariance matrix. In the above example of stock returns, the relabeling in the middle panel was done using the clusters identified through the proposed algorithm; the improved estimate of the correlation matrix is displayed in the right panel. As we prove asymptotically and illustrate via simulations, the improvement of the estimator can be substantial, in particular for $K \ll d$. Even in the unstructured case when $K=d$  and there is no gain asymptotically, the new estimator can perform substantially better in finite samples due to a bias-variance tradeoff, particularly when $n$ is small compared to $d$.

All procedures developed in this paper are based on the matrix $\T$  of pair-wise Kendall rank correlations, which turned out to be slightly more convenient than Spearman's rank correlation matrix, for reasons stated in Section~\ref{sec:model}. A~clear advantage of this approach over the linear correlation matrix is that Kendall's tau is margin-free, well-defined and well-behaved irrespective of the distribution of $\X$, making our methodology nonparametric and margin-free. In particular, it is not assumed that the variables in the same cluster are equally distributed; we only require that the marginal distributions are continuous. No Normality assumption is imposed either. However, when $\X$ is multivariate Normal, or more generally elliptical, there is a one-to-one relationship between $\T$ and the linear correlation matrix, and the improved estimator of $\T$ developed in this paper may be used to obtain more efficient estimators of the linear correlation matrix and its inverse.

Beyond the estimation of correlation itself, our procedure can be used as a first step in building complex dependence models. When $d$ is large, a model for the distribution of $\X$ needs to be both flexible and parsimonious. Within the Normal or  elliptical model, this means that the number of free parameters in the correlation matrix needs to be reduced, and the block structure identified through our algorithm can serve precisely this purpose. Outside the Normal model, dependence in $\X$ can be conveniently described through copulas, for joint distribution of $\X$ can be rewritten, for all $x_1,\ldots, x_d \in \mathbb{R}$, as
\begin{align} 
\label{eq:copula}
	\Pr(X_1 \leq x_1, \dots, X_d \leq x_d) = C\left\{ F_1(x_1),\ldots, F_d(x_d) \right\},
\end{align}
where $F_1,\ldots, F_d$ are the univariate marginal distributions of $\X$ and $C$ is a copula, i.e., a joint distribution function with standard uniform marginals \citep{Sklar:1959}. To achieve flexibility and parsimony when $d$ is large, the complexity of the problem needs to be reduced through an ingenious construction of $C$; examples are vines  \citep{Kurowicka/Joe:2011}, factor models \citep{Krupskii/Joe:2013,Hua/Joe:2017}, or hierarchical constructions \citep{Mai/Scherer:2012,Brechmann:2014,Joe:2015}. The cluster algorithm proposed in this paper is particularly well suited for such approaches: Equicorrelated clusters can first be identified through it and modeled by exchangeable lower-dimensional copulas. Dependence between clusters can then be achieved subsequently through vines or factors.~As such, this paper contributes to the emerging literature on structure learning for copula models; clustering algorithms have been employed very recently to learn the structure of nested (also called hierarchical) Archimedean copulas \cite{Okhrin/Okhrin/Schmid:2013,Gorecki/Hofert/Holena:2016,Gorecki/Hofert/Holena:2017a,Gorecki/Hofert/Holena:2017b}.

The article is organized as follows. In Section~\ref{sec:model}, we discuss the partial exchangeability assumption and its implications. In Section~\ref{sec:improved}, we construct an improved estimator of $\T$ assuming a known cluster structure, derive its asymptotic distribution, and show that it has smaller asymptotic variance than the empirical Kendall rank correlation matrix when $K < d$. The algorithm through which $K$ and the cluster structure can be learned from data, and which outputs an improved estimator of $\T$, is then introduced in Section~\ref{sec:learning}. In Section~\ref{sec:bijective}, we discuss implications for the estimation of the linear correlation matrix.  The new methodology is studied through simulations in Section~\ref{sec:simulation} and illustrated on NASDAQ100 stock returns in Section~\ref{sec:stock}. Section~\ref{sec:conclusion} concludes the paper. The estimation of the covariance matrix of the estimator of $\T$ and proofs are relegated to the Appendix. Additional material from the simulation study and the data illustration as well as  the link to the \textsf{R}-code to implement the method may be found in the Online Supplement.

\section{Partial exchangeability assumption} \label{sec:model}

Throughout, let $\X=(X_1,\dots,X_d)$ be a random vector with continuous univariate marginals, denoted $F_1,\ldots, F_d$. In this case, the copula $C$ in Sklar's decomposition \eqref{eq:copula} is unique; in fact, it is the joint distribution of the vector $(F_1(X_1),\dots, F_d(X_d))$. The following partial exchangeability assumption plays a central role in this paper.
\begin{assumption*}
\namedlabel{ass:main}{PEA} 
For $j\in\{1,\dots, d\}$, let $U_j = F_j(X_j)$. A partition $\mathcal{G} = \{\mathcal{G}_1, \dots, \mathcal{G}_{K}\}$ of $\{ 1,\dots,d \}$ satisfies the Partial Exchangeability Assumption (PEA) if for any $u_1,\dots, u_d \in [0,1]$ and any permutation $\pi$ of $1,\dots, d$ such that for all $j \in\{1,\dots, d\}$ and all $k \in \{1,\dots, K\}$, $ j \in \mathcal{G}_k$ if and only if $\pi(j) \in \mathcal{G}_k$, one has
\begin{align*}
C(u_1, \dots, u_d) = C(u_{\pi(1)}, \dots, u_{\pi(d)})
\end{align*}
or equivalently, $(U_1, \dots, U_d) \eqdis (U_{\pi(1)}, \dots, U_{\pi(d)})$, where $\eqdis$ denotes equality in distribution.
\end{assumption*}

To understand the \ref{ass:main}, note first that the partition $\mathcal{G}=\{\{1,\dots,d\}\}$ with $K = 1$ satisfies the \ref{ass:main} only if $C$ is fully exchangeable, meaning that for all $u_1,\dots, u_d \in [0,1]$ and any permutation $\pi$ of $1,\dots, d$, $C(u_1,\dots, u_d) = C(u_{\pi(1)}, \dots, u_{\pi(d)})$; examples of fully exchangeable copulas are Gaussian or Student $t$ with an equicorrelation matrix, and all Archimedean copulas. When $K > 1$, the \ref{ass:main} is a weaker version of full exchangeability. A partition $\mathcal{G}$ for which \ref{ass:main} holds divides $X_1,\dots, X_d$ into clusters such that the copula $C$ is invariant under within-cluster permutations. In particular, for any $k \in \{1,\dots, K\}$, the copula of $(X_j, j \in \mathcal{G}_k)$ is fully exchangeable. In contrast to many standard clustering contexts, the \ref{ass:main} does not imply that variables in the same cluster are equally distributed, because it only concerns the copula $C$ of $\X$ and not the marginals $F_1,\dots, F_d$. {As explained in Section~\ref{sec:learning}, while the \ref{ass:main} may hold for several partitions, the coarsest partition $\mathcal{G}$ that satisfies the \ref{ass:main} is unique}, viz.
\begin{equation}
\label{eq:coarsestG}
\mathcal{G} = \argmin_{\mathcal{G}^* \text{ satisfies the \ref{ass:main}}} (|\mathcal{G}^*|).
\end{equation}

Finally, observe that the \ref{ass:main} is not restrictive in any way. In the completely unstructured case, $\mathcal{G}$ in Eq.~\eqref{eq:coarsestG} is $\{\{1\},\dots, \{d\}\}$. However, as we demonstrate in Section~\ref{sec:simulation}, there may still be a substantial advantage in considering a coarser partition for inference purposes in finite samples. When the partition $\mathcal{G}$ in Eq.~\eqref{eq:coarsestG} is such that $|\mathcal{G}| = K < d$, {the number of distinct rank correlations is reduced from $d(d-1)/2$ to 
\begin{align}
\label{eq:nblocks}
L = K(K-1)/2 + \sum_{i=1}^K \boldsymbol{1}(|\mathcal{G}_k| > 1).
\end{align}
}
Several commonly used models make an implicit or explicit use of this kind of {complexity reduction}; examples include latent variable models such as frailty or random effects models, Markov random fields or graphical models, hierarchical copula models \citep{Brechmann:2014,Mai/Scherer:2012}, factor copulas \citep{Gregory/Laurent:2004,Krupskii/Joe:2015}, or nested (hierarchical) Archimedean copulas \citep{Joe:2015}. 
\begin{definition}
For a partition $\mathcal{G}$ that satisfies the \ref{ass:main}, we write $X_i \sim X_j$ whenever $i,j\in \mathcal{G}_k$ for some $k \in \{1,\dots, K\}$. Furthermore, the cluster membership matrix $\D$ is a $d\times d$ matrix whose $(i,j)$th entry is given, for all $i,j \in \{1,\dots, d\}$, by $\Delta_{ij}= \mathbf{1}(X_i \sim X_j)$.
\end{definition}
Next, let $\T$ be the $d\times d$ matrix of pair-wise Kendall correlation coefficients. Specifically, for all $i,j \in \{1,\dots, d\}$, the $(i,j)$th entry of $\T$ is the population version of Kendall's tau between $X_i$ and $X_j$, viz.
\begin{align*}
\T_{ij} = \tau(X_i,X_j) =\Pr\{(X_i - X_i^*)(X_j - X_j^*) > 0\} - \Pr\{(X_i - X_i^*)(X_j - X_j^*) < 0\},
\end{align*}	
i.e., the difference between the probabilities of concordance and discordance of $(X_i, X_j)$ and its independent copy $(X_i^*, X_j^*)$. Because $\tau(X_i, X_j)$ depends only on the copula $C_{ij}$ of $(X_i, X_j)$, viz.
\begin{align}\label{eq:taucop}
\tau(X_i, X_j) =	-1+4 \int C_{ij}(u_i,u_j)\ \d C_{ij}(u_i,u_j),
\end{align}
see, e.g., \cite{Nelsen:1999}, it is not surprising that under the \ref{ass:main}, several entries in $\T$ are identical. This is specified in the next result, which follows directly from the \ref{ass:main} and Eq.~\eqref{eq:taucop}.
\begin{proposition} \label{prop:T-equalities}
Suppose that the partition $\mathcal{G}$ of $\{1,\dots, d\}$ satisfies the \ref{ass:main} and that $X_{i_1} \sim X_{i_2}$ and $X_{j_1} \sim X_{j_2}$, where $i_1 \neq j_1$ and $i_2 \neq j_2$. Then the copulas $C_{i_1j_1}$ and $C_{i_2j_2}$ are identical and, consequently, $\T_{i_1j_1} = \T_{i_2j_2}$.
\end{proposition}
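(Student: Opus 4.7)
The plan is to construct an explicit cluster-preserving permutation $\pi$ that simultaneously sends $i_1$ to $i_2$ and $j_1$ to $j_2$, then invoke the \ref{ass:main} to extract the bivariate marginal equality, and finally appeal to Eq.~\eqref{eq:taucop} to pass from copulas to Kendall's taus.

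First, I would fix $k, \ell \in \{1,\dots, K\}$ such that $i_1, i_2 \in \mathcal{G}_k$ and $j_1, j_2 \in \mathcal{G}_\ell$; these exist by the hypotheses $X_{i_1} \sim X_{i_2}$ and $X_{j_1} \sim X_{j_2}$. The key step is to build a permutation $\pi$ of $\{1,\dots, d\}$ satisfying $\pi(i_1) = i_2$, $\pi(j_1) = j_2$, and $\pi(\mathcal{G}_m) = \mathcal{G}_m$ for every $m \in \{1,\dots, K\}$. If $k \neq \ell$, I simply take $\pi$ to be the product of the transpositions $(i_1\, i_2)$ and $(j_1\, j_2)$, extended by the identity off $\{i_1, i_2, j_1, j_2\}$; each transposition acts within a single cluster, so the cluster structure is preserved. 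If $k = \ell$, then all four indices lie in the same cluster $\mathcal{G}_k$, and I would split into subcases according to which of $i_1, i_2, j_1, j_2$ coincide (recall only $i_1 \neq j_1$ and $i_2 \neq j_2$ are guaranteed): when the four are distinct, $\pi = (i_1\, i_2)(j_1\, j_2)$ works; when $i_1 = i_2$, one takes $\pi = (j_1\, j_2)$; when $j_1 = j_2$, one takes $\pi = (i_1\, i_2)$; and in the remaining case $\{i_1, j_1\} = \{i_2, j_2\}$ (necessarily $i_1 = j_2$ and $i_2 = j_1$) the single transposition $(i_1\, j_1)$ does the job. In all subcases the support of $\pi$ lies in $\mathcal{G}_k$, so cluster membership is preserved.

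With such a $\pi$ in hand, the \ref{ass:main} yields
\begin{align*}
(U_1, \dots, U_d) \eqdis (U_{\pi(1)}, \dots, U_{\pi(d)}).
\end{align*}
Projecting onto the coordinates $(i_1, j_1)$ on the left and $(\pi(i_1), \pi(j_1)) = (i_2, j_2)$ on the right gives $(U_{i_1}, U_{j_1}) \eqdis (U_{i_2}, U_{j_2})$. Since $C_{ij}$ is by definition the joint distribution function of $(U_i, U_j)$, this equality in distribution is exactly $C_{i_1 j_1} = C_{i_2 j_2}$. The representation \eqref{eq:taucop} of Kendall's tau as a functional of the bivariate copula alone then immediately delivers $\T_{i_1 j_1} = \tau(X_{i_1}, X_{j_1}) = \tau(X_{i_2}, X_{j_2}) = \T_{i_2 j_2}$.

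There is no deep obstacle here; the only step requiring care is the case analysis when $k = \ell$, to make sure the permutation one writes down genuinely satisfies $\pi(i_1) = i_2$ and $\pi(j_1) = j_2$ simultaneously despite possible coincidences among the four indices. Once $\pi$ is exhibited, the conclusion is a one-line consequence of \ref{ass:main} combined with \eqref{eq:taucop}.
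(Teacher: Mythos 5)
Your overall strategy---exhibit a cluster-preserving permutation $\pi$ with $\pi(i_1)=i_2$ and $\pi(j_1)=j_2$, apply the \ref{ass:main}, project onto the two relevant coordinates, and conclude via Eq.~\eqref{eq:taucop}---is exactly the direct argument the paper has in mind (the paper states the proposition ``follows directly'' from the \ref{ass:main} and Eq.~\eqref{eq:taucop} and gives no further detail), and your final two steps are carried out correctly. The one flaw sits in the very step you single out as requiring care: your subcase enumeration for $k=\ell$ is not exhaustive. You treat (a) all four indices distinct, (b) $i_1=i_2$, (c) $j_1=j_2$, and (d) $\{i_1,j_1\}=\{i_2,j_2\}$, but the hypotheses also allow exactly one cross-coincidence, e.g.\ $i_1=j_2$ with $i_2\neq j_1$: take a cluster containing $\{1,2,3\}$ and $(i_1,j_1)=(1,2)$, $(i_2,j_2)=(3,1)$, so the claim is $C_{12}=C_{31}$. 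This configuration falls under none of your subcases. Moreover, in it the two transpositions $(i_1\,i_2)$ and $(j_1\,j_2)$ are not disjoint, so ``their product'' is ambiguous: one order of composition is a $3$-cycle satisfying $\pi(i_1)=i_2$, $\pi(j_1)=j_2$, while the other order sends $i_1$ to $j_1$ and fails. The symmetric configuration $i_2=j_1$ with $i_1\neq j_2$ is missed as well.

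The gap is easy to repair: in the missing configurations use the appropriate $3$-cycle, or, more cleanly, dispense with the case analysis by observing that the assignment $i_1\mapsto i_2$, $j_1\mapsto j_2$ is an injective map on the two-point set $\{i_1,j_1\}$ (injective because $i_2\neq j_2$) that respects cluster membership; within each cluster it is an injection between subsets of equal cardinality, so it extends to a permutation of $\{1,\dots,d\}$ mapping each $\mathcal{G}_m$ onto itself. With that repair your proof is complete and coincides with the paper's intended argument; note that the paper's appendix lemmas (e.g.\ Lemma~\ref{lem:inter}) carry out the same kind of permutation construction in a more involved setting and handle the overlap configurations there by explicit case distinctions.
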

\begin{remark}\em
It follows from Proposition \ref{prop:T-equalities} that whenever $X_{i_1} \sim X_{i_2}$ and $X_{j_1} \sim X_{j_2}$ where $i_1 \neq j_1$ and $i_2 \neq j_2$, any copula-based measure of association $\kappa$ will satisfy $\kappa(X_{i_1}, X_{j_1}) = \kappa(X_{i_2}, X_{j_2})$; examples of $\kappa$ include Spearman's rho or Gini's gamma \citep{Nelsen:1999}. The reason why we focus on Kendall's tau in this paper is that the asymptotic and finite-sample variance of the empirical Kendall's tau matrix have a tractable form, more so for instance than that of Spearman's rho \citep{Borkowf:2002}. Having said that, the procedures proposed here could in principle be extended to other measures of association.
\end{remark}

Suppose now that a partition $\mathcal{G}$ with $|\mathcal{G}| > 1$ satisfies the \ref{ass:main}. Proposition \ref{prop:T-equalities} then implies that when $X_i \sim X_j$ for some $i \neq j$, the $i$th and $j$th rows and columns in $\T$ are identical, once the diagonal entries are aligned. Consequently, if the variables are relabeled so that the clusters are contiguous, then the cluster membership matrix $\D$ is block-diagonal and $\T$ is a block matrix. As an illustration, consider the following example, which we shall use throughout the paper.
\begin{figure}[t!]
	\centering
	\begin{minipage}{.05\textwidth}
	   	\includegraphics[height=0.18\textheight]{scale-short}\\
	\end{minipage}
	\hspace{2mm}
	\begin{minipage}{.225\textwidth}
	    \centering
    	    \includegraphics[width=1\linewidth]{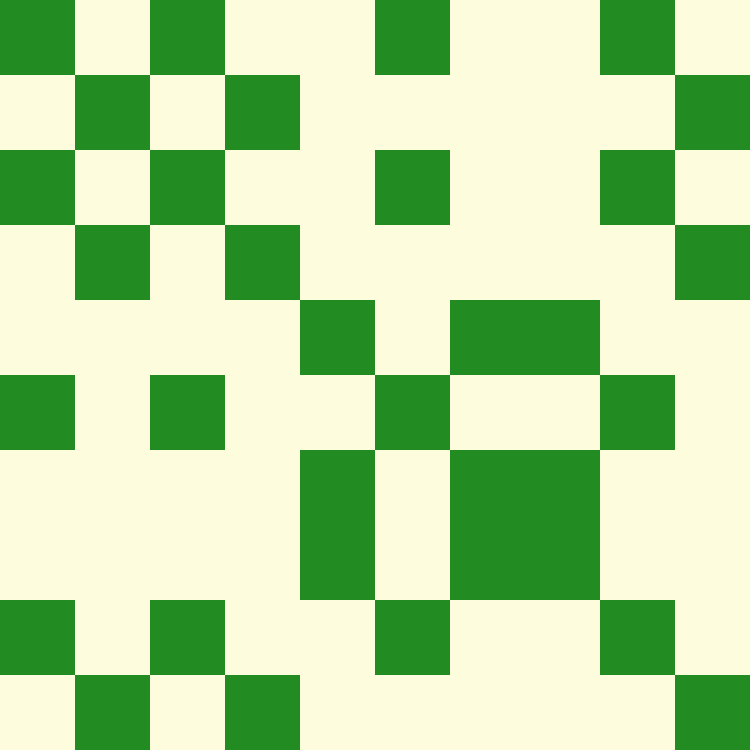}
    	    $\D^*$	
	\end{minipage}
	\begin{minipage}{.225\textwidth}
	    \centering
    	    \includegraphics[width=1\linewidth]{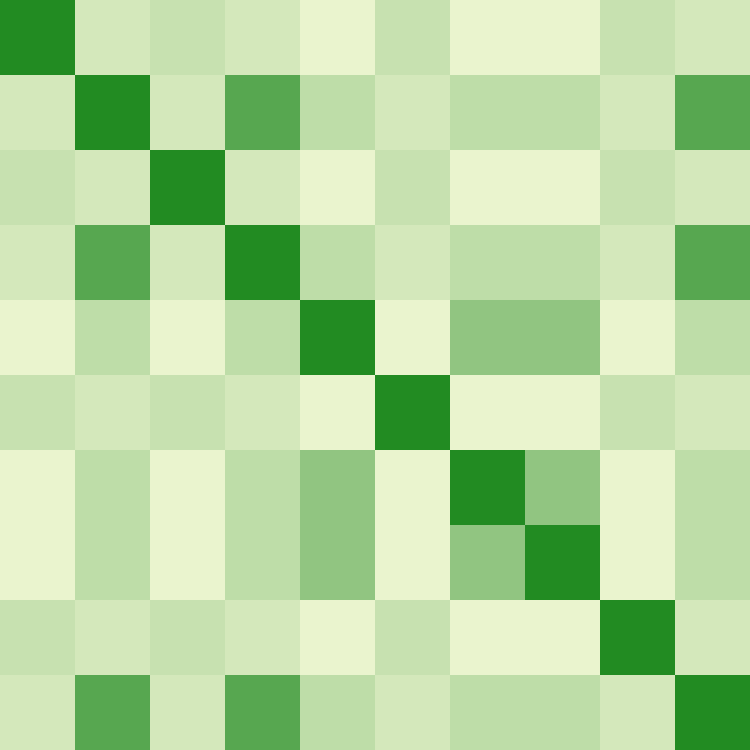}
    	    $\T^*$
	\end{minipage}
	\begin{minipage}{.225\textwidth}
	    \centering
    	    \includegraphics[width=1\linewidth]{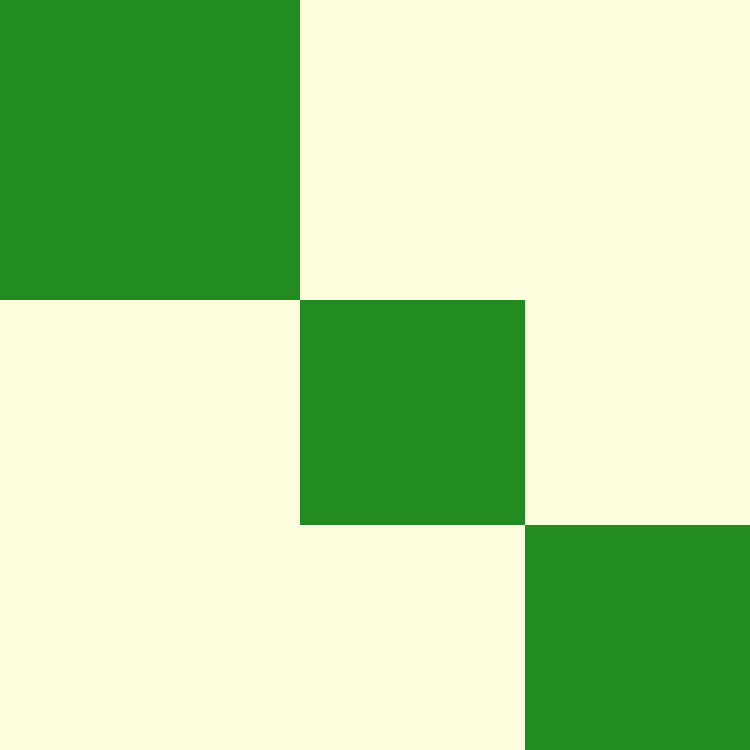}
    	    $\D$
	\end{minipage}
	\begin{minipage}{.225\textwidth}
	    \centering
    	    \includegraphics[width=1\linewidth]{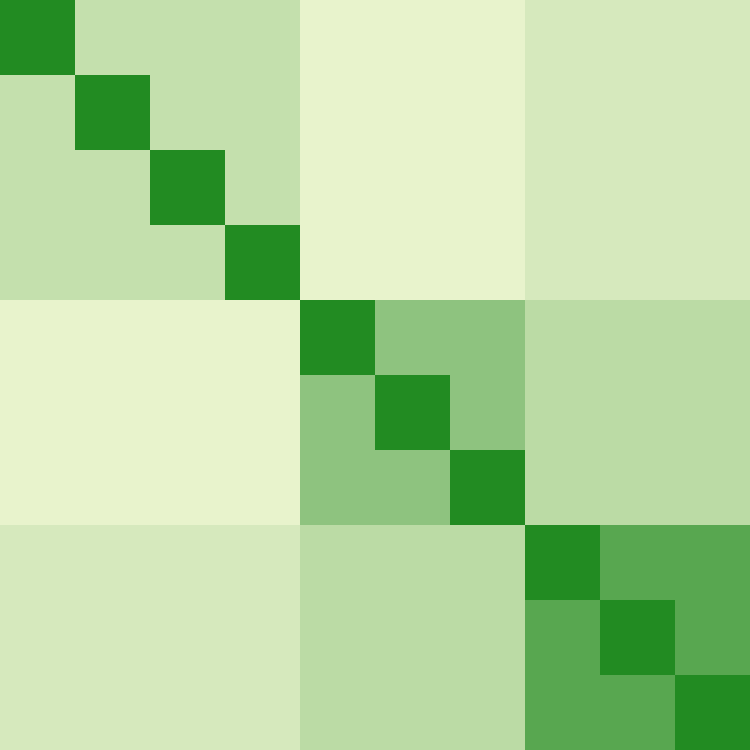}
    	    $\T$
	\end{minipage}
	\caption{Cluster membership and Kendall correlation matrices before ($\D^*$ and $\T^*$) and after ($\D$ and $\T$) relabeling of the variables.} \label{fig:ex-T}
\end{figure}

\begin{example}\label{ex:1}
\em
Consider a random vector $\X^*$ of dimension $d=10$ such that the partition $\mathcal{G}^*=\{\mathcal{G}_1^*, \mathcal{G}_2^*, \mathcal{G}_3^*\}$ of size $K = 3$ given by $\mathcal{G}_1^* = \{ 1, 3, 6, 9 \}$, $\mathcal{G}_2^* = \{ 5, 7, 8 \}$, and $\mathcal{G}_3^* = \{ 2, 4, 10 \}$ satisfies the \ref{ass:main}. The corresponding cluster membership matrix $\D^*$ and the matrix $\T^*$ of pair-wise Kendall correlations are shown in Figure~\ref{fig:ex-T}; the exact distribution of $\X^*$ does not matter at this point. In this case, the clusters are not contiguous, i.e., $\D^*$ is not block diagonal, and the block structure of $\T^*$ is not easily seen.  Once the variables are relabeled as $\X = (X_1,\dots, X_{10}) = (X^*_1, X^*_3, X^*_6, X^*_9,X^*_5,X^*_7, X^*_8, X^*_2, X^*_4, X^*_{10})$, the partition that satisfies the \ref{ass:main} becomes $\mathcal{G} = \{\mathcal{G}_1, \mathcal{G}_2, \mathcal{G}_3\}$, where
\begin{align} 
\label{eq:ex-G}
	\mathcal{G}_1 = \{1,2,3,4\}, \quad \mathcal{G}_2 = \{5,6,7\}, \quad  \mathcal{G}_3 = \{8,9,10\}.
\end{align}
The clusters are now contiguous, $\D$ is block-diagonal, and $\T$ has an apparent block structure; see Figure~\ref{fig:ex-T}. Every time we revisit this example, we work with the relabeled vector $\X$.
\end{example}
Although we use examples in which the matrix $\D$ is block-diagonal for illustrative purposes, contiguity of the clusters is not required.  In fact, given that $\mathcal{G}$ is unknown, the variables are unlikely to be labeled so that $\T$ has an apparent block structure. To describe the latter, we first need additional notation. To this end, let $\R$ be an arbitrary symmetric $d\times d$ matrix. The entries above the main diagonal can be stacked in a vector, say $\boldsymbol{\rho}$, of length $p = d(d-1)/2$. Note that the diagonal elements of $\R$ play no role at this point. The particular way the vectorization is done is irrelevant, as long as it is the same throughout. For example, one may use the lexicographical ordering viz.
\begin{align} 
\label{eq:vectorization}
\boldsymbol{\rho} = 
(\R_{12} , \dots, \R_{1d} ,\R_{23} ,\dots,\R_{2d} , \dots, \R_{(d-1) d})^\top.
\end{align}
For  arbitrary $r \in \{1,\dots, p\}$, $(i_r,j_r)$ refers to the pair of indices $i_r < j_r$ such that $\boldsymbol{\rho}_r = \R_{i_rj_r}$. Now any partition $\mathcal{G} = \{ \mathcal{G}_1,\dots, \mathcal{G}_K\}$ of $\{1,\dots, d\}$ induces a partition of the elements of $\boldsymbol{\rho}$, or, equivalently, of $\{1,\dots, p\}$.
For any $k_1 \le k_2 \in \{1,\dots, K\}$, let
\begin{align}
\label{eq:Bk}
	\mathcal{B}_{k_1k_2} &= \{ r \in \{1,\dots, p\} : (i_r,j_r) \in (\mathcal{G}_{k_1} \times \mathcal{G}_{k_2}) \cup (\mathcal{G}_{k_2} \times \mathcal{G}_{k_1}) \}.
\end{align}
Note that the total number of nonempty blocks $\mathcal{B}_{k_1k_2}$ is $L$ given in Eq.~\eqref{eq:nblocks}
because when $k_1 = k_2$, $\mathcal{B}_{k_1k_2}$ is nonempty only if $|\mathcal{G}_{k_1} | > 1$.
Referring to the sets $\mathcal{B}_{k_1k_2}$ using a single index, the partition of $\{1,\dots, p\}$ is then given by
\begin{align} 
\label{eq:B-cal}
	\mathcal{B}_{\mathcal{G}} = \{\mathcal{B}_1, \ldots, \mathcal{B}_L \}.
\end{align}
In analogy to the cluster membership matrix $\D$, we define a $p \times L$ block membership matrix $\B$; for all $r \in \{1,\dots, p\}$ and $\ell\in \{1,\dots, L\}$, its $(r,\ell)$th entry is given~by
$\B_{r \ell} = \boldsymbol{1}(r \in \mathcal{B}_\ell)$. 
Finally, define the set $\mathcal{T}_{\mathcal{G}}$ of all symmetric matrices with a block structure given by $\mathcal{B}_{\mathcal{G}}$, viz.
\begin{align} 
\label{eq:T-cal}
	\mathcal{T}_{\mathcal{G}} = \{\R \in \mathbb{R}^{d \times d}: \R \text{ symmetric and } \forall_{\ell \in\{1,\dots, L\}} \ r,s \in \mathcal{B}_{\ell} \;\; \Rightarrow \;\; \R_{i_rj_r} = \R_{i_sj_s} \}.
\end{align}
Note that only the elements of $\R$ that are above the main diagonal enter the definition of $\mathcal{T}_{\mathcal{G}}$ in Eq.~\eqref{eq:T-cal}.

Now suppose that $\mathcal{G}$ is such that the \ref{ass:main} holds and that the elements above the main diagonal of $\T$ are stacked in $\t$.  By Proposition \ref{prop:T-equalities}, for any $\ell \in \{1,\dots, L\}$ and $r,s \in \mathcal{B}_{\ell}$, $\t_r = \t_s$, or, equivalently, $\T_{i_r j_r} = \T_{i_sj_s}$. This means that $\T \in \mathcal{T}_{\mathcal{G}}$; when no confusion can arise, we will also write $\t \in \mathcal{T}_{\mathcal{G}}$. Consequently, there are only $L$ distinct elements in $\t$. Storing these in a vector $\t^* \in [-1,1]^{L}$, we thus have
$\t = \B \t^*$.
This means that when \ref{ass:main} holds, the number of free parameters in $\T$ is reduced from $d(d-1)/2$ to $L$ given in Eq.~\eqref{eq:nblocks}. We revisit Example \ref{ex:1} to illustrate.
\begin{example} \em
Consider the matrix $\T$ corresponding to $\X$ in Example \ref{ex:1}, and stack it in a vector $\t$ of length $p=45$ as in Eq.~\eqref{eq:vectorization}. Because there are $K=3$ clusters given in Eq.~\eqref{eq:ex-G}, $\t^*$ has length $L = 6$ and the cluster structure $\mathcal{G}$ reduces the number of free parameters in $\T$ from $45$ to $6$. The six distinct blocks are visible in the right panel of Figure~\ref{fig:ex-T}.
\end{example}

\section{Improved estimation of $\T$} \label{sec:improved}

Suppose that  $\X_1, \ldots ,\X_n$ is a random sample from $\X$ and, for each $i \in \{ 1, \ldots, n\}$, set  $\X_i = (X_{i1},\ldots,X_{id})^\top$. The classical nonparametric estimator of $\T$ is $\Th$; for $i,j \in \{1,\dots, d\}$, its $(i,j)$th entry is given by
\begin{align*} 
	\Th_{ij} = - 1 + \frac{4}{n(n-1)} \sum_{r \neq s} \boldsymbol{1}(X_{ri} \leq X_{si}) \boldsymbol{1}(X_{rj} \leq X_{sj}).
\end{align*}
As explained in Section~\ref{sec:model}, if the \ref{ass:main} holds for some partition $\mathcal{G}$ with $|\mathcal{G}| < d$, the number of free parameters in $\T$ reduces from $d(d-1)/2$ to $L$. We now show that an a priori knowledge of $\mathcal{G}$ leads to a more efficient estimator of $\T$.

Recall first that for all $i \neq j \in \{1,\dots, d\}$, $\Th_{ij}$ is a $U$-statistic and thus unbiased and asymptotically Normal \citep{Hoeffding:1948}. The behavior of $\Th$ was studied in \cite{ElMaache/Lepage:2003} and \cite{Genest/Neslehova/BenGhorbal:2011}; results pertaining to the closely related coefficient of agreement appear in \cite{Ehrenberg:1952}. If $\t$ and $\th$ denote the vectorized versions of $\T$ and $\Th$ respectively, one has, as $n \to \infty$,
\begin{align}
\label{eq:asstau}
\sqrt{n} \, (\th - \t) \rightsquigarrow \mathcal{N}(\boldsymbol{0}_p,\S_\infty),
\end{align}
where $\rightsquigarrow$ denotes convergence in distribution and $\boldsymbol{0}_p$ is the $p$-dimensional vector of zeros. Expressions for the asymptotic variance $\S_\infty$ as well as the finite-sample variance $\S$ of $\th$ have been derived in \cite{Genest/Neslehova/BenGhorbal:2011}.

The asymptotic Normality of $\th$ suggests to base inference on the following loss function $\mathcal{L} : [-1,1]^d \to [0,\infty)$,
\begin{align} 
\label{eq:loss}
	\mathcal{L}(\boldsymbol{t} \mid \th, \S) = (\th - \boldsymbol{t})^\top \S^{-1} (\th - \boldsymbol{t}).
\end{align}
$\mathcal{L}$ is the Mahalanobis distance between $\boldsymbol{t}$ and $\th$ that accounts for the heterogeneous variability of the entries of $\th$. The fact that the finite-sample variance $\S$ is unknown is irrelevant for now; it will only become a concern in Section~\ref{sec:learning}.

Considering an arbitrary $\boldsymbol{t} \in [-1,1]^d$, it is obvious that $\mathcal{L}$ attains its minimum at $\th$ since $\mathcal{L}(\boldsymbol{t}|\th,\S) \geq 0 = \mathcal{L}(\th|\th,\S)$. Now suppose that $\mathcal{G}$ is a partition of $\{1,\dots, d\}$ such that the \ref{ass:main} holds. Unless $|\mathcal{G}|=d$, it is extremely unlikely that $\Th \in \mathcal{T}_{\mathcal{G}}$. However, we can introduce these structural constraints implied by $\mathcal{G}$ into the estimation procedure.
\begin{theorem} \label{thm:maxlike}
Suppose that $\mathcal{G}$ is a partition of $\{1,\dots, d\}$ such that the \ref{ass:main} holds. Let $\B$ be the block membership matrix corresponding to $\mathcal{B}_{\mathcal{G}}$ in Eq.~\eqref{eq:B-cal}. Then for $\mathcal{L}(\boldsymbol{t}|\th, \S)$ as in Eq.~\eqref{eq:loss},
\begin{align} 
\label{eq:maxlike}
\tt(\th \mid \mathcal{G}) = \argmin_{\boldsymbol{t} \in \mathcal{T}_{\mathcal{G}}} \mathcal{L}(\boldsymbol{t} \mid \th, \S) = \G \th,
\end{align}
where $\G = \B \B^+$ and ${^+}$ denotes the Moore--Penrose generalized inverse. Furthermore, for any $\ell \in \{1,\dots, L\}$ and $r \in \mathcal{B}_\ell$, $\tt(\th|\mathcal{G})_r = \sum_{s \in \mathcal{B}_\ell} {\th}_s / |\mathcal{B}_\ell |$.
\end{theorem}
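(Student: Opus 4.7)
The plan is to cast the constrained problem as a weighted least squares problem. Recall from the discussion following Eq.~\eqref{eq:T-cal} that $\boldsymbol{t} \in \mathcal{T}_{\mathcal{G}}$ if and only if $\boldsymbol{t} = \B \boldsymbol{t}^*$ for some $\boldsymbol{t}^* \in \mathbb{R}^{L}$. Since the columns of $\B$ are indicators of the disjoint nonempty blocks $\mathcal{B}_1, \dots, \mathcal{B}_L$, $\B$ has full column rank. The minimization in Eq.~\eqref{eq:maxlike} then reduces to minimizing a strictly convex quadratic in $\boldsymbol{t}^*$, whose unique minimizer is $\boldsymbol{t}^* = (\B^\top \S^{-1} \B)^{-1} \B^\top \S^{-1} \th$, so that
\begin{equation*}
\tt(\th \mid \mathcal{G}) = \B(\B^\top \S^{-1} \B)^{-1} \B^\top \S^{-1} \th.
\end{equation*}
The real work is then to show this equals $\G \th = \B\B^+ \th$, i.e., that weighted and ordinary least squares coincide under the \ref{ass:main}.

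The crux is an equivariance property of $\S$. Any permutation $\pi$ of $\{1,\dots,d\}$ that preserves cluster membership induces a permutation $\sigma$ of $\{1,\dots,p\}$ defined by taking $\sigma(r)$ to be the index of the pair $\{\pi(i_r),\pi(j_r)\}$. By construction $\sigma$ fixes each block $\mathcal{B}_\ell$ setwise, and because each $\Th_{ij}$ is a rank statistic, the \ref{ass:main} forces $\S_{\sigma(r)\sigma(s)} = \S_{rs}$ for all $r,s$. For any $r, r'$ in a common block one can produce such a $\pi$ (a product of at most two within-cluster transpositions) satisfying $\sigma(r) = r'$. Summing over $s \in \mathcal{B}_m$ and reindexing via $\sigma$ then yields, for every block $\mathcal{B}_m$,
\begin{equation*}
\sum_{s \in \mathcal{B}_m} \S_{r's} = \sum_{s \in \mathcal{B}_m} \S_{rs},
\end{equation*}
which in matrix form says that each column of $\S\B$ is constant on each block. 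Equivalently, $\S\B = \B\boldsymbol{C}$ for some $L\times L$ matrix $\boldsymbol{C}$, invertible because $\S$ is positive definite and $\B$ has full column rank.

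From $\S\B = \B\boldsymbol{C}$ one obtains $\S^{-1}\B = \B\boldsymbol{C}^{-1}$, hence $\B^\top \S^{-1} \B = \B^\top\B\,\boldsymbol{C}^{-1}$ and $\B^\top \S^{-1} = \boldsymbol{C}^{-\top}\B^\top$. Substituting gives
\begin{equation*}
\B(\B^\top\S^{-1}\B)^{-1}\B^\top\S^{-1} = \B\,\boldsymbol{C}(\B^\top\B)^{-1}\boldsymbol{C}^{-\top}\B^\top.
\end{equation*}
Symmetry of $\B^\top\S\B = \B^\top\B\,\boldsymbol{C}$, together with the fact that $\B^\top\B = \mathrm{diag}(|\mathcal{B}_1|,\dots,|\mathcal{B}_L|)$ is diagonal, yields the commutation relation $\boldsymbol{C}(\B^\top\B)^{-1} = (\B^\top\B)^{-1}\boldsymbol{C}^\top$, so that $\boldsymbol{C}(\B^\top\B)^{-1}\boldsymbol{C}^{-\top} = (\B^\top\B)^{-1}$. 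Hence $\tt(\th \mid \mathcal{G}) = \B(\B^\top\B)^{-1}\B^\top\,\th = \B\B^+\,\th = \G\th$. The entry-wise formula is then immediate: since $\B^+ = \mathrm{diag}(1/|\mathcal{B}_\ell|)\B^\top$, one has $(\G\th)_r = \sum_{s\in\mathcal{B}_\ell}\th_s/|\mathcal{B}_\ell|$ whenever $r\in\mathcal{B}_\ell$.

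I expect the main obstacle to be the equivariance argument of the second paragraph, which requires carefully tracing how the \ref{ass:main} propagates through the covariance structure of $\th$ to deliver $\S\B = \B\boldsymbol{C}$; once this structural fact is in hand, the remaining linear algebra is essentially mechanical.
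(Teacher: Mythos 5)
Your proof is correct, and it reaches the conclusion by a genuinely lighter route than the paper's. The skeleton is the same (write $\boldsymbol{t}=\B\boldsymbol{t}^*$, obtain the weighted least-squares solution $\B(\B^\top\S^{-1}\B)^{-1}\B^\top\S^{-1}\th$, then show it collapses to $\B\B^+\th$), but the structural input differs. The paper first proves that $\S$ has the full block structure $\S\in\mathcal{S}_{\mathcal{G}}$ (Proposition \ref{prop:equal2}, via the explicit covariance formula for $\Th$), then that $\mathcal{S}_{\mathcal{G}}$ is closed under inversion (Proposition \ref{prop:S-inv}, via Cayley--Hamilton), and finally the averaging identity $\B^\top \SS=\B^\top\SS\B\B^+$ for $\SS\in\mathcal{S}_{\mathcal{G}}$ (Lemma \ref{lem:maxlike}, which rests on the counting Lemmas \ref{lem:slice-cardinality} and \ref{lem:ratio}); these are applied to $\S^{-1}$. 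You instead isolate exactly what is needed: the \ref{ass:main} makes the joint law of $\Th$ invariant under cluster-preserving relabelings (your equivariance step, which replaces the explicit copula-integral computation and the combinatorial counting), so the block sums $\sum_{s\in\mathcal{B}_m}\S_{rs}$ depend on $r$ only through its block, i.e.\ $\S\B=\B\boldsymbol{C}$; the inverse is then handled by the invariant-subspace observation $\S^{-1}\B=\B\boldsymbol{C}^{-1}$ rather than by closure of $\mathcal{S}_{\mathcal{G}}$ under inversion, and the symmetry-plus-commutation computation finishes the reduction of weighted to ordinary least squares. What your approach buys is economy: no Cayley--Hamilton and no cardinality lemmas, and only column-space invariance of $\S$ rather than its full block pattern. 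What the paper's heavier machinery buys is reuse: the full description of $\mathcal{S}_{\mathcal{G}}$ and its closure under inversion are needed again for the improved estimator of $\S$ in \ref{subapp:sigma-tilde} and for Propositions \ref{prop:qform} and \ref{prop:stop}. Two cosmetic points: diagonality of $\B^\top\B$ is not actually needed for your commutation relation (symmetry suffices), and, like the paper, you tacitly use that $\S$ (hence $\B^\top\S^{-1}\B$) is positive definite and that the block averages automatically respect the box constraint $[-1,1]$; neither is a gap relative to the paper's own treatment.
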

\begin{remark}\em
The block-wise averages that appear in $\tt$ are akin to the pair-wise linear correlation averaging from \cite{Elton/Gruber:1973} and \cite{Ledoit/Wolf:2003a}, and more particularly the block DECO in \cite{Engle/Kelly:2012}, which uses block-wise averages of linear correlations. {Averaging of pair-wise Kendall's tau in order to fit nested Archimedean copula models has recently been employed in  \cite{Okhrin/Tetereva:2017}.} Also note that the \ref{ass:main} plays a crucial role in the proof of Theorem \ref{thm:maxlike}. It is needed to invoke Proposition \ref{prop:S-inv} and Lemma~\ref{lem:maxlike} and obtain a finite-sample variance matrix $\S$ with a structure that will lead to the required simplifications. In particular, it is not enough to assume that $\t \in \mathcal{T}_{\mathcal{G}}$ for some partition $\mathcal{G}$.
\end{remark}

\begin{figure}[t!]
\centering
	\begin{minipage}{.05\textwidth}
	   	\includegraphics[height=0.2\textheight]{scale-short}\\
	\end{minipage}
	\hspace{2mm}
	\begin{minipage}{.25\textwidth}
	    \centering
    	    \includegraphics[width=1\linewidth]{example-T}\\
    	    $\T$	
	\end{minipage}
	\begin{minipage}{.25\textwidth}
	    \centering
    	    \includegraphics[width=1\linewidth]{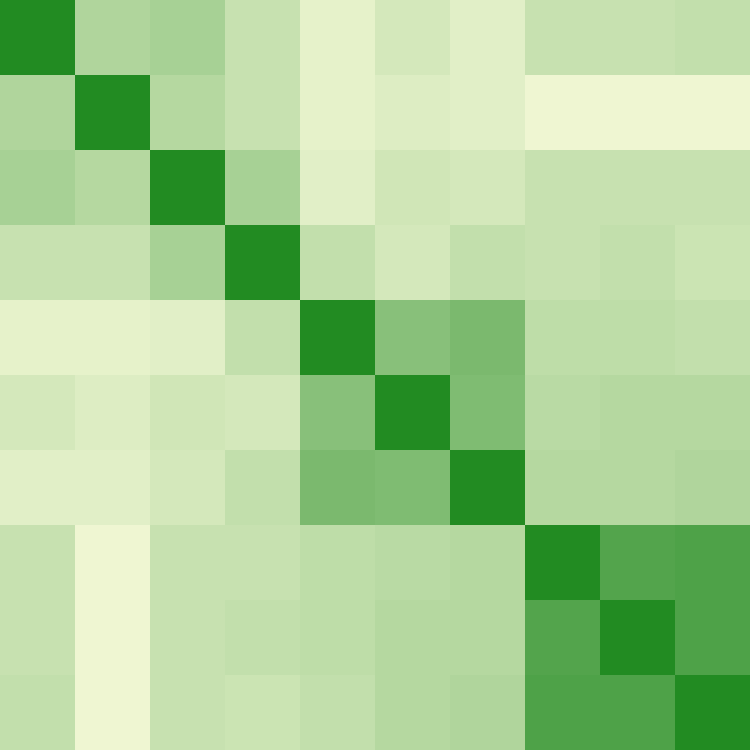}	\\
    	    $\Th$
	\end{minipage}
	\begin{minipage}{.25\textwidth}
	    \centering
    	    \includegraphics[width=1\linewidth]{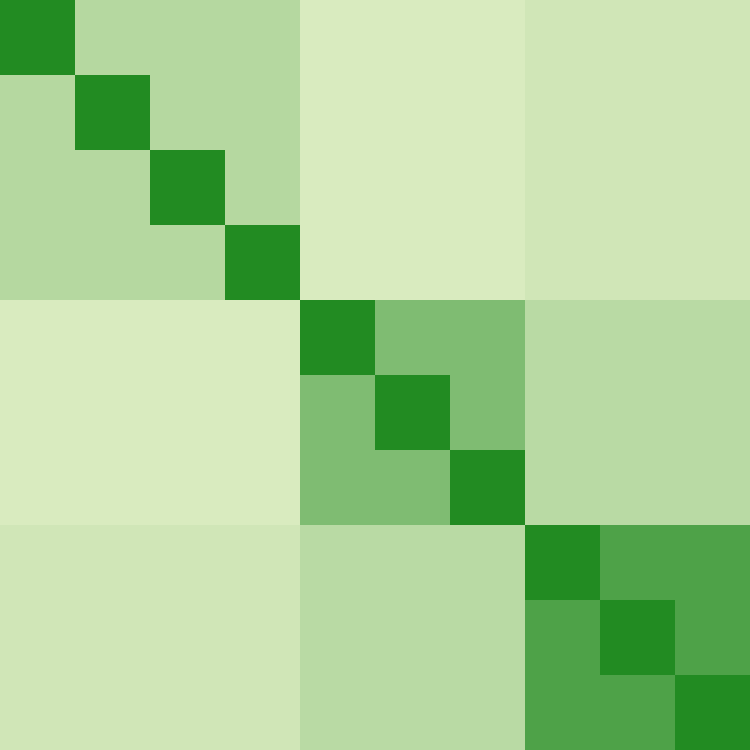}	\\
    	    $\Tt$
	\end{minipage}
	\caption{The matrices $\T$, $\Th$ and $\Tt$ in Example~\ref{ex:3}.} \label{fig:ex-T-tilde}
\end{figure}

When it introduces no confusion, we refer to $\tt(\th|\mathcal{G})$ as $\tt$  and to its matrix version $\Tt(\Th|\mathcal{G})$ as $\Tt$. What is crucial in Theorem \ref{thm:maxlike} is that  $\tt$ consists of the cluster averages of the elements of $\th$ and as such does not involve the unknown finite-sample variance $\S$ of $\th$, so an estimator of $\S$ is not needed to compute $\tt$. The information contained in $\mathcal{G}$ is introduced by projecting $\Th$ onto $\mathcal{T}_{\mathcal{G}}$. The resulting estimator $\Tt$ is expected to be closer to the original matrix $\T$ because the entries that estimate a same value are averaged over, thus reducing the estimation variance.  In fact, for any $r \in \{1,\dots, p\}$, the asymptotic variance of $\tt_r$ is less than or equal to that of $\th_r$ as a result of the following theorem.
\begin{theorem} \label{thm:reduced-variance}
Suppose that $\mathcal{G}$ is a partition of $\{1,\dots, d\}$ such that the \ref{ass:main} holds. Let $\B$ be the block membership matrix corresponding to $\mathcal{B}_{\mathcal{G}}$ in Eq.~\eqref{eq:B-cal}, and $\G =\B \B^+$ and $\tt = \G\th$ be as in Theorem~\ref{thm:maxlike}. Then the following statements hold:
\begin{itemize}
\item[(i)] As $n\to \infty$, $\sqrt{n} \, (\tt - \t) \rightsquigarrow \mathcal{N}\left( \mathbf{0}, \G\S_{\infty} \right)$.
\item[(ii)] The matrix $\S_\infty-\G\S_\infty$ is nonnegative definite.
\end{itemize}
\end{theorem}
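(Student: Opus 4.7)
The plan is to deduce both statements from the asymptotic distribution of $\th$ given in Eq.~\eqref{eq:asstau}, combined with the projection nature of $\G=\B\B^+$ and a key commutativity relation between $\G$ and $\S_\infty$ that is inherited from the \ref{ass:main}.

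For (i), the first step is to note that $\G=\B\B^+$ is symmetric and idempotent, so it is the orthogonal projection onto $\mathrm{col}(\B)$. Because the \ref{ass:main} together with Proposition~\ref{prop:T-equalities} yields $\t \in \mathcal{T}_{\mathcal{G}}$, one may write $\t = \B\t^*$ for some $\t^* \in [-1,1]^L$, whence $\G\t = \B\B^+\B\t^* = \B\t^* = \t$. Since $\tt = \G\th$, this gives $\sqrt{n}(\tt - \t) = \G\sqrt{n}(\th - \t)$, and the continuous mapping theorem applied to Eq.~\eqref{eq:asstau} produces
\begin{equation*}
\sqrt{n}(\tt - \t) \rightsquigarrow \mathcal{N}(\mathbf{0}, \G\S_\infty\G^\top) = \mathcal{N}(\mathbf{0}, \G\S_\infty\G).
\end{equation*}

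What remains for (i) is to identify $\G\S_\infty\G$ with $\G\S_\infty$, i.e., to establish that $\G$ and $\S_\infty$ commute. This is the crux of the proof and must exploit the block structure that the \ref{ass:main} imposes on $\S_\infty$, the same structure leveraged in Proposition~\ref{prop:S-inv}. The underlying reason is that any permutation of $\{1,\dots,d\}$ stabilizing each $\mathcal{G}_k$ induces a permutation of $\{1,\dots,p\}$ that stabilizes each block $\mathcal{B}_\ell$; by the \ref{ass:main} the distribution of $\th$, and therefore $\S_\infty$, is invariant under such permutations. This symmetry of $\S_\infty$ with respect to $\mathcal{B}_{\mathcal{G}}$ is precisely what is needed for $\G$, which is constant on each product block $\mathcal{B}_\ell \times \mathcal{B}_{\ell'}$, to commute with $\S_\infty$. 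Granted $\G\S_\infty = \S_\infty\G$, idempotence gives $\G\S_\infty\G = \S_\infty\G^2 = \S_\infty\G = \G\S_\infty$, completing (i).

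For (ii), the same commutativity together with $\G = \G^\top$ and $\G^2 = \G$ yields
\begin{align*}
(\I - \G)\S_\infty(\I - \G)^\top &= \S_\infty - \G\S_\infty - \S_\infty\G + \G\S_\infty\G \\
&= \S_\infty - 2\G\S_\infty + \G\S_\infty \\
&= \S_\infty - \G\S_\infty.
\end{align*}
The left-hand side is a congruence transformation of the nonnegative definite $\S_\infty$, hence itself nonnegative definite, which is (ii). The main obstacle throughout is the commutativity $\G\S_\infty = \S_\infty\G$: without it, neither the symmetric form of the covariance in (i) nor the clean quadratic-form representation in (ii) survives. Everything else is linear algebra applied to a weak limit, but this commutation is where the \ref{ass:main}---through the invariance it imposes on $\S_\infty$---really does the work.
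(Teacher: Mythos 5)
Your skeleton is the same as the paper's: write $\tt-\t=\G(\th-\t)$ using $\G\t=\t$, invoke Eq.~\eqref{eq:asstau} and the continuous mapping theorem, and reduce both (i) and (ii) to the identity $\G\S_\infty=\G\S_\infty\G=\S_\infty\G$ combined with idempotence of $\G$ and $\Id{p}-\G$ (this is exactly how the paper argues, via Lemma~\ref{lem:idem} and Lemma~\ref{lem:gamma-sigma-rel}). The gap is at the step you yourself call the crux: you never prove the commutation $\G\S_\infty=\S_\infty\G$, you only assert that the invariance of $\S_\infty$ under within-cluster permutations ``is precisely what is needed.'' Beware that the superficially tempting version of this claim is false: $\S_\infty$ is \emph{not} constant on the product blocks $\mathcal{B}_{\ell}\times\mathcal{B}_{\ell'}$, because its entries depend on how the index pairs $(i_r,j_r)$ and $(i_s,j_s)$ overlap (this is why the paper works with the finer class $\mathcal{S}_{\mathcal{G}}$ rather than $\mathcal{T}_{\mathcal{B}_{\mathcal{G}}}$; see Example~\ref{ex:A.1}). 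So commuting with the block-averaging operator $\G$ is genuinely nontrivial; in the paper it is the content of Lemma~\ref{lem:gamma-sigma-rel}, whose proof needs Proposition~\ref{prop:equal2} ($\S_\infty\in\mathcal{S}_{\mathcal{G}}$) plus the counting Lemmas~\ref{lem:slice-cardinality} and~\ref{lem:ratio}. As written, your proposal has a hole exactly where the \ref{ass:main} has to do its work.

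The good news is that your invariance idea can be completed, and then it gives an argument that is arguably cleaner than the paper's combinatorics. For each permutation $\pi$ of $\{1,\dots,d\}$ preserving every $\mathcal{G}_k$, let $P_\pi$ denote the induced $p\times p$ permutation matrix acting on the pairs $\{1,\dots,p\}$; the \ref{ass:main} gives $P_\pi\S_\infty P_\pi^\top=\S_\infty$, i.e., $P_\pi\S_\infty=\S_\infty P_\pi$. The orbits of this finite group $H$ of induced permutations on $\{1,\dots,p\}$ are exactly the blocks $\mathcal{B}_1,\dots,\mathcal{B}_L$, so the group average $|H|^{-1}\sum_{\pi\in H}P_\pi$ is the orthogonal projector onto the vectors constant on each block, which is precisely $\G=\B\B^+$. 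Averaging the relation $P_\pi\S_\infty=\S_\infty P_\pi$ over $\pi\in H$ then yields $\G\S_\infty=\S_\infty\G$, after which your algebra for (i) and for the congruence representation $(\Id{p}-\G)\S_\infty(\Id{p}-\G)^\top=\S_\infty-\G\S_\infty$ in (ii) goes through verbatim. Without this bridge (or an equivalent computation such as the paper's Lemmas~\ref{lem:slice-cardinality}, \ref{lem:ratio} and \ref{lem:gamma-sigma-rel}), the proposal is incomplete.
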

To conclude this section, we illustrate $\tt$ using the setup in Example \ref{ex:1}.
\begin{example}\label{ex:3}\em
Consider a random sample of size $n=70$ from the vector $\X$ in Example \ref{ex:1}; we used $\X$ to be Normally distributed with Kendall correlation matrix $\T$ displayed in Figure~\ref{fig:ex-T}. Figure~\ref{fig:ex-T-tilde} displays $\T$, $\Th$ and $\Tt$. For this one simulated sample, it is clear that $\Tt$ is a better estimate of $\T$ than $\Th$.
\end{example}

\section{Learning the structure $\bs{\mathcal{G}}$} \label{sec:learning}

Because the cluster structure $\mathcal{G}$ is typically unknown, the improved estimator $\tt$ derived in Section~\ref{sec:improved} cannot be directly used. We now propose a way to learn $\mathcal{G}$ from data and to obtain an improved estimator of $\t$ as a by-product. To do so, we first identify $d$ candidate structures in Section~\ref{subsec:alg1} and then choose one among them in Section~\ref{subsec:selection}.

\subsection{Creating a set of candidate structures} \label{subsec:alg1}

If the \ref{ass:main} holds for some partition $\mathcal{G}$, it holds for any refinement thereof, defined below.
\begin{definition}\label{def:condition}
Let $\mathcal{G} = \{ \mathcal{G}_1,\dots, \mathcal{G}_K\}$ be a partition of $\{1,\dots, d\}$. A refinement of $\mathcal{G}$  is a partition $\mathcal{G}^* = \{ \mathcal{G}_1,\dots, \mathcal{G}_{K^*}\}$ of $\{1,\dots, d\}$ such that $K^* > K$ and 
for any $k^* \in \{1,\dots, K^*\}$ there exists $k \in \{1,\dots, K\}$ such that $\mathcal{G}_{k^*} \subseteq \mathcal{G}_{k}$.
\end{definition}
The block structure implied by a refinement of $\mathcal{G}$ is consistent with the block structure implied by $\mathcal{G}$. This is formalized in the next proposition, which follows easily from Eq.~\eqref{eq:T-cal}.
\begin{proposition} \label{prop:refinement}
For partitions $\mathcal{G}$, $\mathcal{G}^*$ of $\{1,\dots, d\}$, $\mathcal{G}^*$ is a refinement of $\mathcal{G}$ if and only if $\mathcal{T}_{\mathcal{G}} \subseteq \mathcal{T}_{\mathcal{G}^*}$.
\end{proposition}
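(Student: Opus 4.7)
The plan is to route the proof through the intermediate statement that $\mathcal{G}^*$ refines $\mathcal{G}$ if and only if the pair-index partition $\mathcal{B}_{\mathcal{G}^*}$ refines $\mathcal{B}_{\mathcal{G}}$ (meaning every block of $\mathcal{B}_{\mathcal{G}^*}$ is contained in some block of $\mathcal{B}_{\mathcal{G}}$). By Eq.~\eqref{eq:T-cal}, $\mathcal{T}_{\mathcal{G}}$ is exactly the space of symmetric matrices that are constant on the blocks of $\mathcal{B}_{\mathcal{G}}$, so the inclusion $\mathcal{T}_{\mathcal{G}} \subseteq \mathcal{T}_{\mathcal{G}^*}$ will be equivalent to $\mathcal{B}_{\mathcal{G}^*}$ refining $\mathcal{B}_{\mathcal{G}}$ by a standard correspondence.

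For the forward direction, I would assume $\mathcal{G}^*$ refines $\mathcal{G}$ and pick any block $\mathcal{B}^*_{k^*_1 k^*_2}$ of $\mathcal{B}_{\mathcal{G}^*}$. By Definition~\ref{def:condition}, there exist $k_1, k_2$ with $\mathcal{G}^*_{k^*_1} \subseteq \mathcal{G}_{k_1}$ and $\mathcal{G}^*_{k^*_2} \subseteq \mathcal{G}_{k_2}$, so from the definition in Eq.~\eqref{eq:Bk} one reads off directly that $\mathcal{B}^*_{k^*_1 k^*_2} \subseteq \mathcal{B}_{k_1 k_2}$. Any $\R \in \mathcal{T}_{\mathcal{G}}$ is constant on each $\mathcal{B}_{k_1 k_2}$, hence constant on each $\mathcal{B}^*_{k^*_1 k^*_2}$, giving $\R \in \mathcal{T}_{\mathcal{G}^*}$.

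For the reverse direction, I would suppose $\mathcal{T}_{\mathcal{G}} \subseteq \mathcal{T}_{\mathcal{G}^*}$ and use indicator matrices as probes: for each $\mathcal{B}_\ell \in \mathcal{B}_{\mathcal{G}}$, the symmetric matrix $\R^{(\ell)}$ with above-diagonal entries $\R^{(\ell)}_{i_r j_r} = \boldsymbol{1}(r \in \mathcal{B}_\ell)$ lies in $\mathcal{T}_{\mathcal{G}}$, hence in $\mathcal{T}_{\mathcal{G}^*}$. For $\R^{(\ell)}$ to be constant on every block of $\mathcal{B}_{\mathcal{G}^*}$, each such block must lie entirely inside $\mathcal{B}_\ell$ or entirely outside; letting $\ell$ range shows that $\mathcal{B}_{\mathcal{G}^*}$ refines $\mathcal{B}_{\mathcal{G}}$. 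To translate this back to a cluster-level statement, I would examine each $\mathcal{G}^*_{k^*}$ in turn: singleton clusters are trivially contained in some $\mathcal{G}_k$, while for $|\mathcal{G}^*_{k^*}| \ge 2$ I would pick distinct $i,j \in \mathcal{G}^*_{k^*}$ and let $\mathcal{B}^*_{k^* k^*} \subseteq \mathcal{B}_{k_1 k_2}$. Introducing any third element $l \in \mathcal{G}^*_{k^*}$ (when available) and inspecting the pairs $(i,l)$ and $(j,l)$, which must also lie in $\mathcal{B}_{k_1 k_2}$, yields inconsistent membership requirements unless $k_1 = k_2 =: k$, in which case all elements of $\mathcal{G}^*_{k^*}$ sit inside $\mathcal{G}_k$.

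The main obstacle is this last passage from block refinement back to cluster refinement. The $\mathcal{B}$ partition records only pair indices without marking which blocks are ``within-cluster'' and which are ``between-cluster'', so the three-element contradiction argument is what rules out the off-diagonal configuration $k_1 \neq k_2$ in the generic case; clusters of size exactly two require a small additional observation (or the natural convention that equal partitions count as trivial refinements, so that the degenerate $d=2$ scenario is absorbed), which I expect to be the only delicate point in an otherwise direct combinatorial argument.
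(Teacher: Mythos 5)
Your route is the natural fleshing-out of what the paper leaves implicit: the paper gives no argument for Proposition~\ref{prop:refinement} beyond the remark that it follows easily from Eq.~\eqref{eq:T-cal}, and your translation of $\mathcal{T}_{\mathcal{G}} \subseteq \mathcal{T}_{\mathcal{G}^*}$ into refinement of the pair partitions is correct: the forward direction via $\mathcal{B}^*_{k_1^*k_2^*}\subseteq\mathcal{B}_{k_1k_2}$ read off from Eq.~\eqref{eq:Bk}, and the indicator-matrix probes showing that each block of $\mathcal{B}_{\mathcal{G}^*}$ sits inside a single block of $\mathcal{B}_{\mathcal{G}}$, are both fine.

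The genuine gap is precisely the step you flag and then leave open: clusters $\mathcal{G}^*_{k^*}=\{i,j\}$ of size exactly two, where no third within-cluster element is available; and your parenthetical (counting equal partitions as trivial refinements) does not repair it, because the problematic configuration $i\in\mathcal{G}_{k_1}$, $j\in\mathcal{G}_{k_2}$ with $k_1\neq k_2$ has nothing to do with $\mathcal{G}^*=\mathcal{G}$. The missing observation is to probe with an element \emph{outside} the pair: for $d\geq 3$ pick $m\notin\{i,j\}$, say $m\in\mathcal{G}_{k_3}$. The pairs $(i,m)$ and $(j,m)$ involve the same unordered pair of $\mathcal{G}^*$-clusters, hence lie in the same block of $\mathcal{B}_{\mathcal{G}^*}$, hence (by your block-refinement step) in the same block of $\mathcal{B}_{\mathcal{G}}$; since blocks of $\mathcal{B}_{\mathcal{G}}$ carrying distinct unordered cluster labels are disjoint, $\{k_1,k_3\}=\{k_2,k_3\}$, which forces $k_1=k_2$. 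This argument applies verbatim to any two elements of any cluster of $\mathcal{G}^*$, so it in fact supersedes your three-element case distinction. Two residual caveats are worth stating explicitly rather than gesturing at: for $d=2$ the statement is literally false ($\mathcal{T}_{\{\{1\},\{2\}\}}=\mathcal{T}_{\{\{1,2\}\}}$ is the set of all symmetric $2\times 2$ matrices, yet $\{\{1,2\}\}$ does not refine $\{\{1\},\{2\}\}$); and since Definition~\ref{def:condition} requires $K^*>K$, the case $\mathcal{G}^*=\mathcal{G}$ must be excluded or refinement read as allowing equality. With those conventions your reverse direction shows each cluster of $\mathcal{G}$ is a disjoint union of clusters of $\mathcal{G}^*$, so $K^*>K$ whenever $\mathcal{G}^*\neq\mathcal{G}$, and the proof closes.
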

Proposition \ref{prop:refinement} implies that if $\T \in \mathcal{T}_{\mathcal{G}}$ for some partition $\mathcal{G}$, then for any refinement $ \mathcal{G}^*$ thereof, $\T \in \mathcal{T}_{\mathcal{G}^*}$. 
\begin{example}\label{ex:4}\em
Consider the partition $\mathcal{G}$ given in Eq.~\eqref{eq:ex-G} in Example \ref{ex:1}. The partition $\mathcal{G}^* = \{\mathcal{G}_1^*,\dots, \mathcal{G}_4^*\}$ with $\mathcal{G}_1^* = \{1,2 \}$, $\mathcal{G}_2^* = \{3,4 \}$,  $\mathcal{G}_3^* = \{5,6,7 \}$,  $\mathcal{G}_4^* = \{ 8,9,10 \}$ is a refinement of $\mathcal{G}$ since $\mathcal{G}_1^*, \mathcal{G}_2^* \subseteq \mathcal{G}_1$, $\mathcal{G}_3^* \subseteq \mathcal{G}_2$ and $\mathcal{G}_4^* \subseteq \mathcal{G}_3$. Consequently, $\mathcal{G}^*$ satisfies the \ref{ass:main} as well. Figure~\ref{fig:ex-T-tilde-sub} shows the cluster membership matrices $\D$ and $\D^*$ corresponding to $\mathcal{G}$ and $\mathcal{G}^*$, respectively. Also displayed are the estimates $\Tt(\Th|\mathcal{G}^*)$ and $\Tt(\Th |\mathcal{G})$; one can see that the block structure of the former is embedded in the latter but not conversely.
\begin{figure}[t!]
	\centering
	\begin{minipage}{.05\textwidth}
	   	\includegraphics[height=0.18\textheight]{scale-short}\\
	\end{minipage}
	\hspace{2mm}
	\begin{minipage}{.225\textwidth}
	    \centering
    	    \includegraphics[width=1\linewidth]{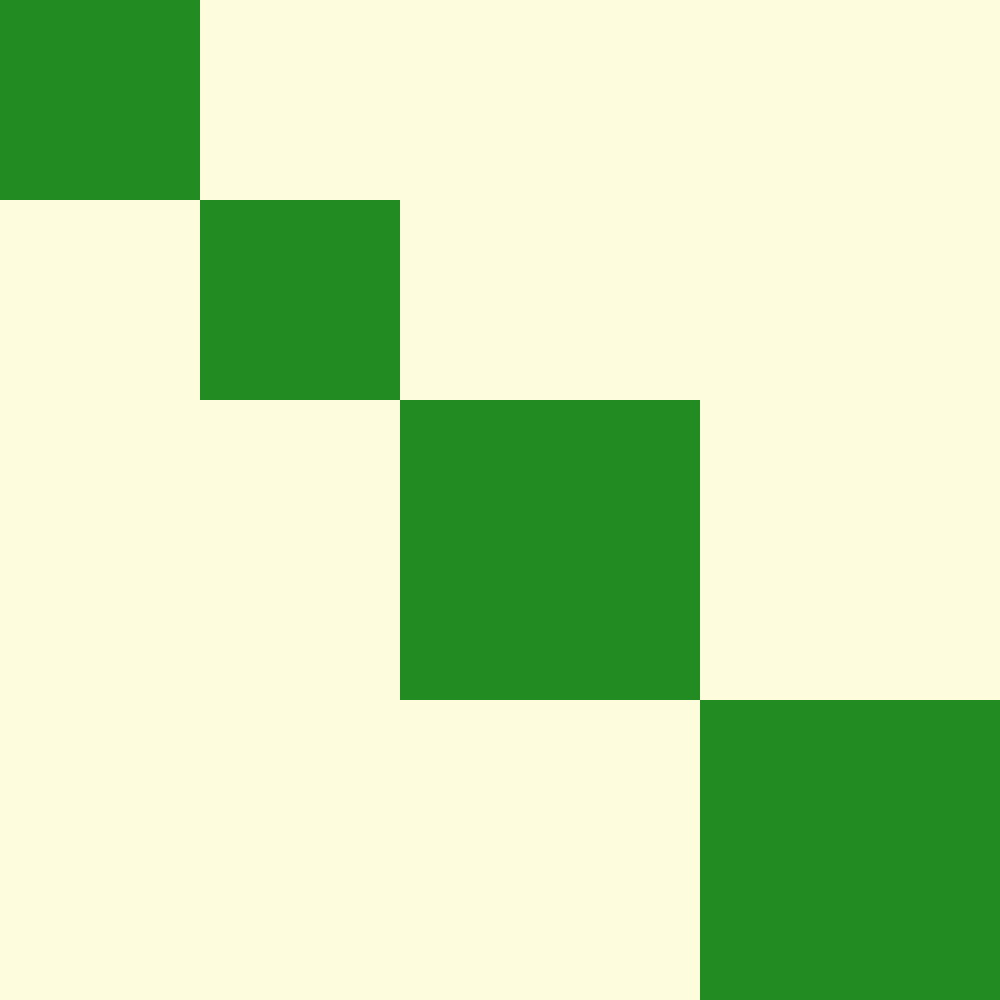}\\
    	    $\D^*$
	\end{minipage}
	\begin{minipage}{.225\textwidth}
	    \centering
    	    \includegraphics[width=1\linewidth]{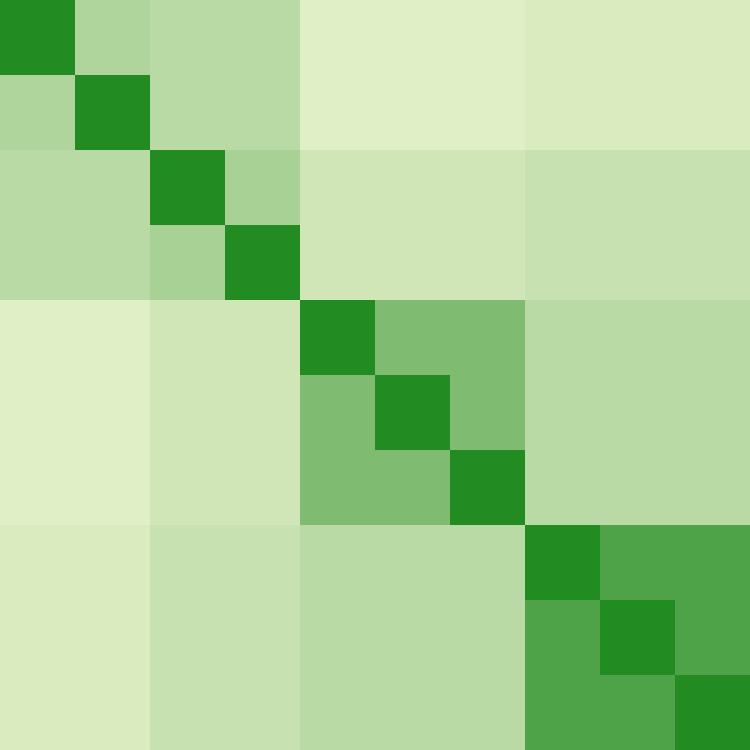}	\\
    	    $\Tt(\Th|\mathcal{G}^*)$
	\end{minipage}
	\begin{minipage}{.225\textwidth}
	    \centering
    	    \includegraphics[width=1\linewidth]{example-Delta}\\
    	    $\D$
	\end{minipage}
	\begin{minipage}{.225\textwidth}
	    \centering
    	    \includegraphics[width=1\linewidth]{example-T-tilde}\\
    	    $\Tt(\Th|\mathcal{G})$
	\end{minipage}
	\caption{The estimates $\Tt(\Th|\mathcal{G}^*)$ and $\Tt(\Th|\mathcal{G})$ from Example \ref{ex:4}, and the cluster membership matrices $\D$ and $\D^*$ of the partitions $\mathcal{G}$ and $\mathcal{G}^*$, respectively.} \label{fig:ex-T-tilde-sub}
\end{figure}
\end{example}

While the partition that satisfies the \ref{ass:main} may not be unique, the following holds.
\begin{proposition}\label{prop:uniqueG}
The coarsest partition $\mathcal{G}$ that satisfies the \ref{ass:main}, i.e., $\mathcal{G}$ given by Eq.~\eqref{eq:coarsestG}, is unique.
\end{proposition}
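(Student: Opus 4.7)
The plan is to reformulate uniqueness as a closure property of the family of PEA-satisfying partitions under the \emph{join} operation in the partition lattice (refinement ordering). Concretely, I would prove that if two partitions $\mathcal{G}_1$ and $\mathcal{G}_2$ of $\{1,\dots,d\}$ both satisfy the \ref{ass:main}, then so does their join $\mathcal{G}_1\vee\mathcal{G}_2$ (the coarsest partition that is a coarsening of each $\mathcal{G}_i$, equivalently, the partition whose blocks are the connected components of the graph whose edges connect $i,j$ whenever $i,j$ lie in a common block of $\mathcal{G}_1$ or of $\mathcal{G}_2$). Once this is established, iterating the join over the (finite) family of all PEA-satisfying partitions produces a distinguished PEA partition that is coarser than every other PEA partition; this element must then achieve the minimum in Eq.~\eqref{eq:coarsestG} and does so uniquely.

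To prove the join closure property, I would pass to the equivalent distributional form of the \ref{ass:main}: letting $H(\mathcal{G})$ denote the group of permutations $\pi$ of $\{1,\dots,d\}$ that fix each block of $\mathcal{G}$ setwise, the \ref{ass:main} for $\mathcal{G}$ is exactly the statement that the law of $(U_1,\dots,U_d)$ is invariant under every $\pi\in H(\mathcal{G})$. The key identity I need is
\begin{equation*}
H(\mathcal{G}_1\vee\mathcal{G}_2)=\bigl\langle H(\mathcal{G}_1)\cup H(\mathcal{G}_2)\bigr\rangle,
\end{equation*}
since invariance under the two generating groups immediately yields invariance under the group they generate. The inclusion $\supseteq$ is immediate because $H(\mathcal{G}_i)\subseteq H(\mathcal{G}_1\vee\mathcal{G}_2)$ for each $i$. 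For $\subseteq$, I would argue block by block: for any block $B$ of $\mathcal{G}_1\vee\mathcal{G}_2$, I need to show that every permutation of $B$ (extended by the identity off $B$) lies in the generated group. Since transpositions generate the symmetric group on $B$, it suffices to realize every transposition $(i,j)$ with $i,j\in B$. By definition of $\mathcal{G}_1\vee\mathcal{G}_2$ there exists a path $i=k_0,k_1,\dots,k_m=j$ in $B$ such that consecutive $k_\ell,k_{\ell+1}$ lie together in a block of $\mathcal{G}_1$ or of $\mathcal{G}_2$, so each transposition $(k_\ell,k_{\ell+1})$ lies in $H(\mathcal{G}_1)\cup H(\mathcal{G}_2)$. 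The standard conjugation identity $(i,j)=(k,j)(i,k)(k,j)$ then allows an induction on the path length to express $(i,j)$ as a product of such basic transpositions.

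Putting the pieces together, I would conclude as follows: let $\mathcal{F}$ be the collection of all partitions satisfying the \ref{ass:main} (nonempty, as it contains the discrete partition), and define $\mathcal{G}^\star=\bigvee_{\mathcal{H}\in\mathcal{F}}\mathcal{H}$. By iterating the join closure result, $\mathcal{G}^\star\in\mathcal{F}$, and by construction every $\mathcal{H}\in\mathcal{F}$ is a refinement of $\mathcal{G}^\star$, so in particular $|\mathcal{H}|\geq|\mathcal{G}^\star|$ with equality only if $\mathcal{H}=\mathcal{G}^\star$. This is the uniqueness asserted by Eq.~\eqref{eq:coarsestG}.

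The main obstacle is the group-theoretic identity $H(\mathcal{G}_1\vee\mathcal{G}_2)=\langle H(\mathcal{G}_1)\cup H(\mathcal{G}_2)\rangle$; the inductive conjugation argument is elementary but has to be stated carefully, since one must verify that the conjugations keep the intermediate permutations inside the generated group rather than leaking out of it. Everything else (passing from invariance of the law under generators to invariance under the whole group, and the uniqueness conclusion from join closure) is routine.
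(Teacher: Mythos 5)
Your proposal is correct, and it takes a genuinely different route from the paper. The paper argues by contradiction: it supposes two distinct partitions $\mathcal{G}$ and $\mathcal{G}^*$ of equal (minimal) cardinality both satisfy the \ref{ass:main}, picks two blocks $\mathcal{G}_1,\mathcal{G}_2$ of $\mathcal{G}$ that intersect a common block $\mathcal{G}_1^*$ of $\mathcal{G}^*$, and shows directly that merging $\mathcal{G}_1$ and $\mathcal{G}_2$ again yields a \ref{ass:main}-partition, contradicting minimality; the verification reduces to showing each transposition of some $i\in\mathcal{G}_1$ with $j\in\mathcal{G}_2$ preserves the law, which is done by an explicit four-case analysis (according to whether $i$ and $j$ lie in the intersections with $\mathcal{G}_1^*$) expressing the transposition as compositions such as $\pi_1\circ\pi_2\circ\pi_1$ or $\pi_1\circ\pi_3\circ\pi_2\circ\pi_3\circ\pi_1$ of permutations allowed under $\mathcal{G}$ or $\mathcal{G}^*$. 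Your argument proves the stronger, structural statement that the family of \ref{ass:main}-partitions is closed under the lattice join, via the identity $H(\mathcal{G}_1\vee\mathcal{G}_2)=\bigl\langle H(\mathcal{G}_1)\cup H(\mathcal{G}_2)\bigr\rangle$ for the associated Young subgroups, realized by transpositions along paths in the union graph and an induction using the conjugation identity; uniqueness then follows by taking the join over the whole (finite, nonempty) family. The underlying mechanism is the same in both proofs, namely that the invariance group of the law lets one compose symmetries of the two partitions (your paths of length one, two and three correspond precisely to the paper's Cases I through IV), but your version is more general and arguably cleaner: it identifies the coarsest \ref{ass:main}-partition explicitly as the join of all of them rather than relying on a minimality contradiction, at the cost of stating and proving the group-generation identity carefully. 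All the steps you flag as routine (invariance under generators extends to the generated group, and equal cardinality plus refinement forces equality) are indeed sound.
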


The fact that any refinement of $\mathcal{G} $ in Eq.~\eqref{eq:coarsestG} also satisfies the \ref{ass:main} motivates us to start with the finest possible partition $\mathcal{G}^{(d)} = \{ \{1\},\dots,\{d\} \}$
 for which the \ref{ass:main} always holds, and to merge the clusters one at a time in a way that resembles hierarchical agglomerative clustering. Specifically, we will create a path $\mathcal{P} = \{\mathcal{G}^{(d)},\dots, \mathcal{G}^{(1)}\}$ through the set of all possible partitions of $\{1,\dots, d\}$ with  $|\mathcal{G}^{(i)}| = i$ for $i \in \{ 1,\dots, d\}$, with the aim that $\mathcal{G}$ given in Eq.~\eqref{eq:coarsestG} is an element of $\mathcal{P}$. The construction of $\mathcal{P}$ is motivated by the following result.
 \begin{proposition}\label{prop:qform}
 Let $\mathcal{G}$ be an arbitrary partition of $\{1,\dots, d\}$, $\B$ be the block membership matrix corresponding to $\mathcal{B}_{\mathcal{G}}$ in Eq.~\eqref{eq:B-cal}, and $\G =\B \B^+$, $\tt = \G \th$ as in Theorem~\ref{thm:maxlike}. If $\S_\infty$ is positive definite, and $\hat \S$ is an estimator of $\S$ such that { $\hat \S^{-1}$ exists} and, as $n \to \infty$, $ n \hat \S \to \S_\infty$ element-wise in probability, the following holds.
 \begin{itemize}
 \item[(i)] If $\mathcal{G}$ fulfils the \ref{ass:main}, $(\th - \tt)^\top (\hat\S^{-1}/n) (\th -\tt) \to 0$ in probability.
 \item[(ii)] If $\mathcal{G}$ does not fulfill the \ref{ass:main}, $(\th - \tt)^\top (\hat \S^{-1}/n) (\th -\tt) \to \t^\top (\Id{p} - \G) \S_\infty^{-1} (\Id{p} - \G) \t$ in probability; if $\G \t \neq \t$, the limit is strictly positive.
\end{itemize}
\end{proposition}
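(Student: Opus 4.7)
The plan is to rewrite the quadratic form using the projection structure of $\G$, and then let Slutsky's theorem do the work in both parts.

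First, observe that $\G = \B\B^+$ is the orthogonal projector onto the column space of $\B$; in particular $\G$ is symmetric and idempotent, so $\Id{p} - \G$ is also a symmetric projector. Writing $\tt = \G\th$ gives the identity
\begin{align*}
\th - \tt = (\Id{p} - \G)\th,
\end{align*}
which will be the starting point for both parts. I would also establish the two auxiliary limits that will be plugged into Slutsky's theorem: (a) $\th \to \t$ in probability, which follows from Eq.~\eqref{eq:asstau}, and (b) $\hat\S^{-1}/n = (n\hat\S)^{-1} \to \S_\infty^{-1}$ in probability. Part (b) uses the assumption $n\hat\S \to \S_\infty$ element-wise in probability together with positive-definiteness of $\S_\infty$: since matrix inversion is continuous on the open set of nonsingular matrices, the continuous mapping theorem applies.

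For part (i), under the \ref{ass:main} we have $\t \in \mathcal{T}_{\mathcal{G}}$, so $\t = \B\t^*$ for some $\t^*$, hence $\t \in \mathrm{col}(\B)$ and $\G\t = \t$. Then
\begin{align*}
\th - \tt = (\Id{p} - \G)\th = (\Id{p} - \G)(\th - \t),
\end{align*}
and from Eq.~\eqref{eq:asstau} combined with Slutsky's theorem, $\sqrt{n}(\th - \tt) \rightsquigarrow (\Id{p}-\G) Z$ with $Z \sim \mathcal{N}(\mathbf{0}_p, \S_\infty)$. Consequently
\begin{align*}
(\th - \tt)^\top (\hat\S^{-1}/n)(\th - \tt) = \frac{1}{n}\,[\sqrt{n}(\th-\tt)]^\top (\hat\S^{-1}/n)\,[\sqrt{n}(\th-\tt)] = O_P(1)/n,
\end{align*}
which tends to $0$ in probability.

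For part (ii), the \ref{ass:main} fails for $\mathcal{G}$, but we can still use $\th \to \t$ in probability to get
\begin{align*}
\th - \tt = (\Id{p}-\G)\th \;\longrightarrow\; (\Id{p}-\G)\t \quad \text{in probability.}
\end{align*}
Combining this with $\hat\S^{-1}/n \to \S_\infty^{-1}$ in probability and using the continuous mapping theorem on the bilinear map $(x,M)\mapsto x^\top M x$ yields
\begin{align*}
(\th-\tt)^\top (\hat\S^{-1}/n)(\th-\tt) \;\longrightarrow\; \t^\top(\Id{p}-\G)\S_\infty^{-1}(\Id{p}-\G)\t
\end{align*}
in probability, where symmetry of $\Id{p}-\G$ is used to absorb the transposes. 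If $\G\t \neq \t$, then $v := (\Id{p}-\G)\t \neq \mathbf{0}_p$, and since $\S_\infty$ is positive definite so is $\S_\infty^{-1}$, giving $v^\top \S_\infty^{-1} v > 0$.

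The only subtlety is justifying $\hat\S^{-1}/n \to \S_\infty^{-1}$ from element-wise convergence of $n\hat\S$; everything else is a direct algebraic identity followed by Slutsky or the continuous mapping theorem. I do not foresee a serious obstacle.
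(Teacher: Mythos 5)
Your proposal is correct and follows essentially the same route as the paper's proof: the identity $\th - \tt = (\Id{p}-\G)\th$, consistency of $\th$, continuity of matrix inversion giving $\hat\S^{-1}/n \to \S_\infty^{-1}$, and the continuous mapping theorem yielding the limit $\t^\top(\Id{p}-\G)\S_\infty^{-1}(\Id{p}-\G)\t$ in (ii), with positive definiteness of $\S_\infty^{-1}$ giving strict positivity when $\G\t\neq\t$. The only (valid) cosmetic difference is in (i): the paper simply observes that under the \ref{ass:main} one has $(\Id{p}-\G)\t=\boldsymbol{0}_p$, so the same limit is zero, whereas you invoke Eq.~\eqref{eq:asstau} to get an $O_P(1)/n$ bound, which in fact gives a slightly sharper rate statement.
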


The construction of $\mathcal{P}$ relies on slowly introducing information through constraints under which the loss function $\mathcal{L}$ in Eq.~\eqref{eq:loss} is minimized. To do this, the estimation of the unknown finite-sample variance $\S$ of $\th$ needs to be considered. While $\S$ does not appear in the estimator $\tt$ in Theorem \ref{thm:maxlike}, it is relevant for the construction of $\mathcal{P}$. In \ref{app:sigma}, we detail how to obtain an estimator of $\S$ for a given partition $\mathcal{G}$: {We first compute the plug-in estimator $\Sh$ of $\S$ following \cite{Genest/Neslehova/BenGhorbal:2011} and then average out certain entries of $\Sh$ using the block structure of $\S$ induced by $\mathcal{G}$. The resulting estimator, which also depends on $\th$, is denoted $\St(\Sh|\th,\mathcal{G})$ or simply $\St$ when no confusion can arise. When $n$ is small compared to $d$ or when $K$ is large, not enough averaging is employed and $\St$ may be too noisy. In such cases, we apply Steinian shrinkage following \cite{Devlin/Gnanadesikan/Kettenring:1975}, viz.
\begin{align*}
	\St(\Sh \mid \th,\mathcal{G},w) = (1 - w) \St(\Sh\mid \th,\mathcal{G}) + w \St_{\rm{diag}},
\end{align*}
where $\St_{\rm{diag}}$ is a diagonal matrix whose diagonal entries are those of $\St(\Sh|\th,\mathcal{G})$ and $w \in [0,1]$ is the shrinkage intensity parameter.} As shown in \ref{subapp:sigma-tilde}, $n\tilde\S( \Sh\ | \th, \mathcal{G}, w ) \to \S_{\infty}$ element-wise in probability if $w \to 0$ as $n\to\infty$. {When $K$ is large, we suggest using a value of $w$ close to $1$ so that a lot of shrinkage is applied. The extreme case $w=1$ is recommended when $d$ is large, as the estimation and storage of $\S$ becomes virtually impossible otherwise.}

Now suppose that the $i$th partition $\mathcal{G}^{(i)}$ has been selected; let $\St_w^{(i)} = \tilde\S(\Sh\ | \th, \mathcal{G}^{(i)}, w)$ denote the corresponding estimate of $\S$. To select the $(i-1)$st cluster structure $\mathcal{G}^{(i-1)}$, merge two clusters at a time and choose the optimal merger, in the sense that
\begin{align} 
\label{eq:cheap}
	\mathcal{G}^{(i-1)} = \argmin_{\substack{\mathcal{G}^*: \mathcal{T}_{\mathcal{G}^*}  \subset \mathcal{T}_{\mathcal{G}^{(i)}} , \: |\mathcal{G}^*| = i-1}} \mathcal{L} \Big(\tt(\th |\mathcal{G}^*)\mid \th,\St_w^{(i)} \Big),
\end{align}
for $\tt(\cdot | \cdot)$ given in Eq.~\eqref{eq:maxlike}. The minimization in Eq.~\eqref{eq:cheap} is done by simply going through all $i(i-1)/2$ possible mergers; $\mathcal{T}_{\mathcal{G}^*}  \subset \mathcal{T}_{\mathcal{G}^{(i)}}$ indicates that $\mathcal{G}^{(i)}$ must be a refinement of $\mathcal{G}^*$, so that the previously introduced equality constraints are carried. We then update the estimate of $\t$ to $\tt(\th |\mathcal{G}^{(i-1)})$ as in Theorem \ref{thm:maxlike}, the estimate of $\S$ to $\St_w^{(i-1)} = \St(\Sh\ | \th, \mathcal{G}^{(i-1)}, w)$ and iterate the above steps until $i=1$. The entire procedure is formalized in Algorithm \ref{algo:path} below.

\begin{algorithm} 
\caption{Construction of the path $\mathcal{P}$.}
\label{algo:path}
\begin{algorithmic}[1]
\State  Fix $w \in [0,1]$ and set  $\mathcal{G}^{(d)} = \{ \{1\},\dots,\{d\}\}$, $\St_w^{(d)} = \tilde\S(\Sh|\th, \mathcal{G}^{(d)},w)$. \Comment{Initialization}
\For {$i \in \{d-1, \dots, 1\}$}
\State Set 
$
\mathcal{G}^{(i)} = \argmin\limits_{\mathcal{G}^*: \mathcal{T}_{\mathcal{G}^*} \subset \mathcal{T}_{\mathcal{G}^{(i+1)}}, \: | \mathcal{G}^*| = i} \mathcal{L} \Big( \tt(\th|\mathcal{G}^*) \mid \th,\St_w^{(i+1)}\Big)$.
\Comment{Structure selection}
\State  Set $\St_w^{(i)} = \tilde\S(\Sh| \th, \mathcal{G}^{(i)},w)$. \Comment{Update}
\EndFor
\State  Return $\mathcal{P} = \{\mathcal{G}^{(d)}, \dots, \mathcal{G}^{(1)}\}$. \Comment{Output}
\end{algorithmic}
\end{algorithm}

Algorithm \ref{algo:path} returns a sequence $\mathcal{P}=\{\mathcal{G}^{(d)}, \dots, \mathcal{G}^{(1)}\}$ of decreasingly complex structures; note that $\mathcal{G}^{(i)}$ is a refinement of $\mathcal{G}^{(i-1)}$ for all $i\in\{2,\dots, d\}$. The partitions $\mathcal{G}^{(d)} = \{ \{1\},\dots,\{d\}\}$ and $\mathcal{G}^{(1)} = \{\{1,\dots, d\}\}$ are inevitable outputs of the algorithm; this is why we refer to $\mathcal{P}$ as a \textit{path} we took to go from $\mathcal{G}^{(d)}$ to $\mathcal{G}^{(1)}$ in the space of all partitions. If present on the path, $\mathcal{G}$ in Eq.~\eqref{eq:coarsestG} is always $\mathcal{G}^{(K)}$, where $K = |\mathcal{G}|$. An illustration is provided in Example~\ref{ex:5} below.

\begin{example}\label{ex:5}\em
Figure~\ref{fig:ex-path} shows the application of Algorithm \ref{algo:path} on $\Th$ constructed from the random sample from $\X$ in Example \ref{ex:3}; the true cluster structure $\mathcal{G}$ in Eq.~\eqref{eq:coarsestG} is given by Eq.~\eqref{eq:ex-G}. It indeed lies on the path; it corresponds to  $\D^{(3)}$.
\begin{figure}[t!]
	\centering
	\begin{subfigure}[ht!]{0.15\textwidth}
    	\centering
        \includegraphics[width=1\linewidth]{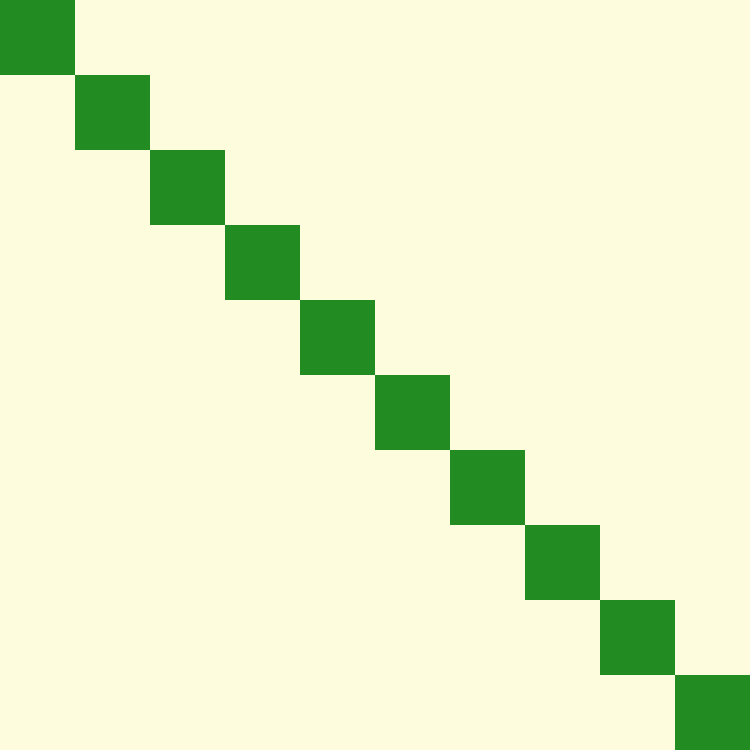}\\
        \vspace{.1cm}
        \includegraphics[width=1\linewidth]{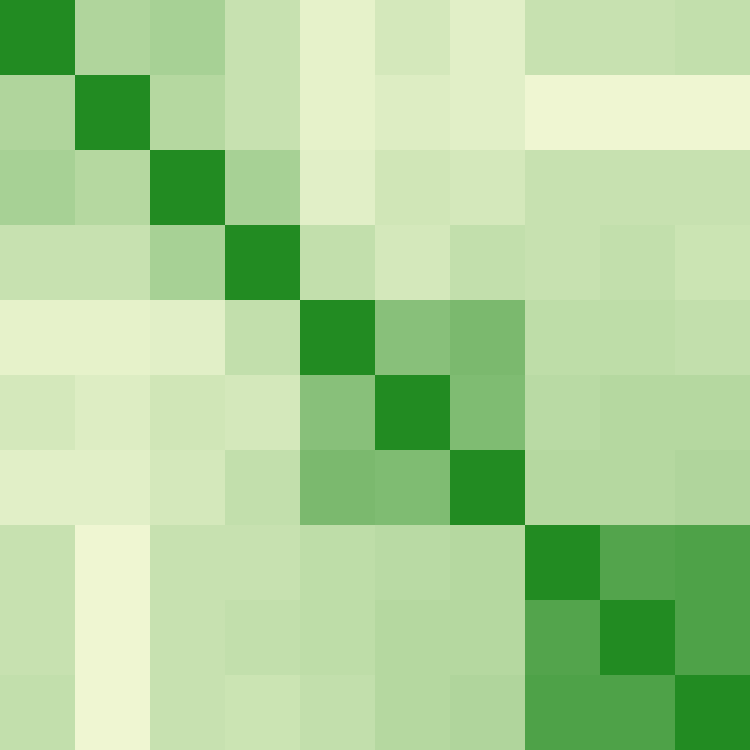}\\
        \vspace{.1cm}
        $(\D^{(10)},\Tt^{(10)})$
	\end{subfigure}\quad $\cdots$ \quad
	\begin{subfigure}[ht!]{0.15\textwidth}
    	\centering
        \includegraphics[width=1\linewidth]{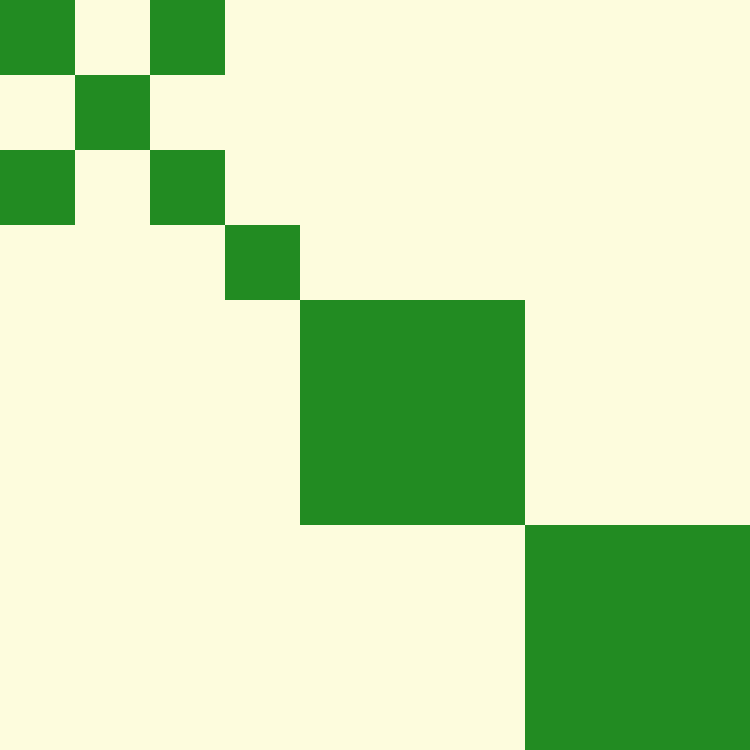}\\
        \vspace{.1cm}
        \includegraphics[width=1\linewidth]{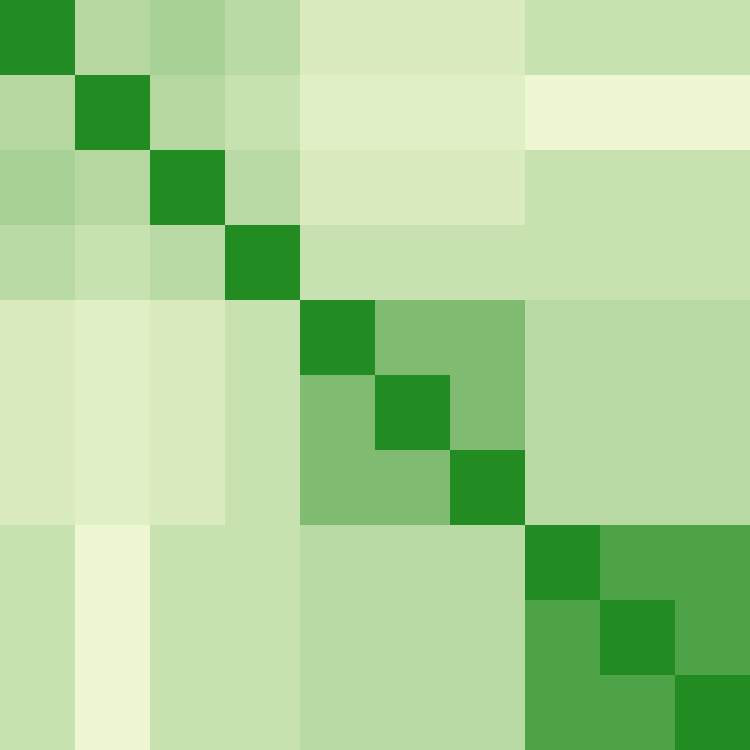}\\
        \vspace{.1cm}
        $(\D^{(5)},\Tt^{(5)})$
	\end{subfigure}
	\begin{subfigure}[ht!]{0.15\textwidth}
    	\centering
        \includegraphics[width=1\linewidth]{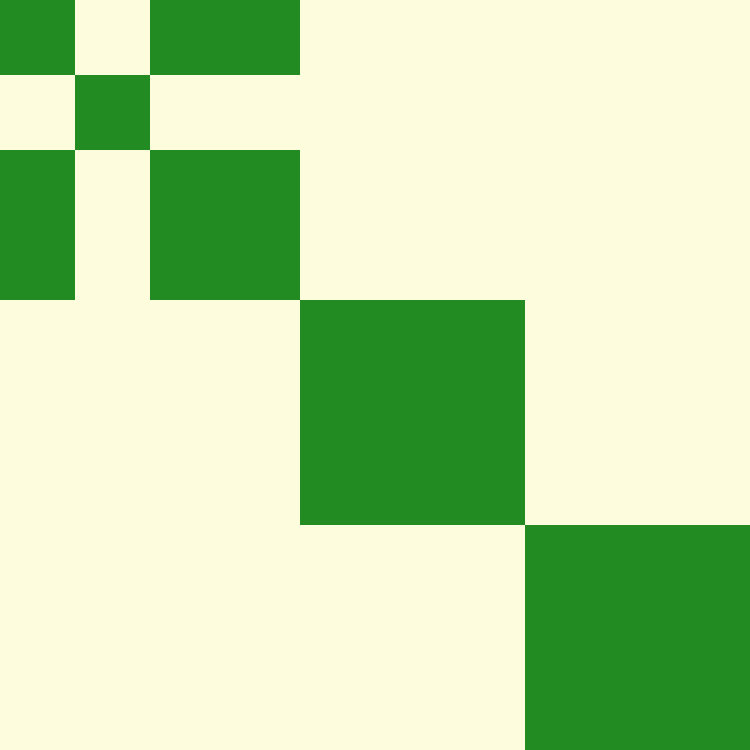}\\
        \vspace{.1cm}
        \includegraphics[width=1\linewidth]{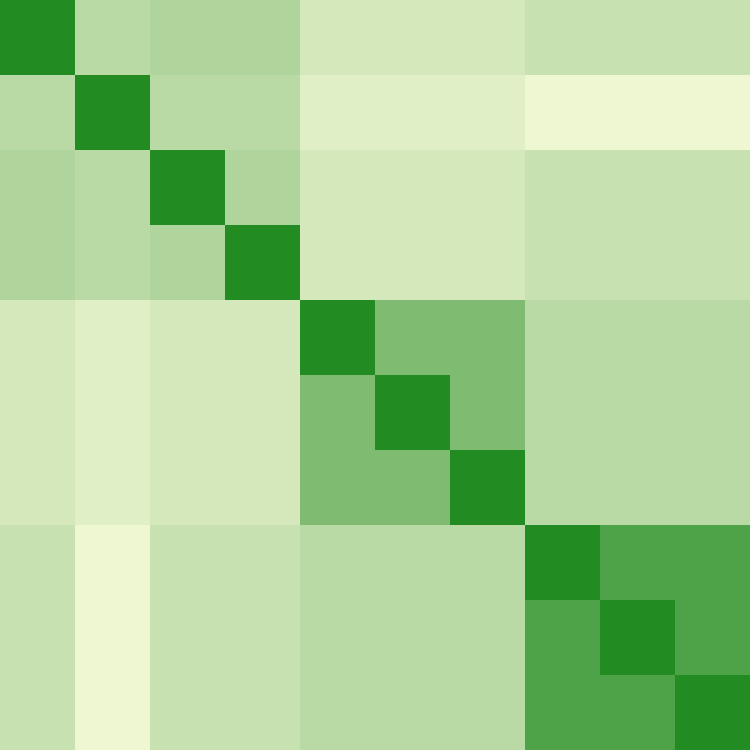}\\
        \vspace{.1cm}
        $(\D^{(4)},\Tt^{(4)})$
	\end{subfigure}
	\begin{subfigure}[ht!]{0.15\textwidth}
    	\centering
        \includegraphics[width=1\linewidth]{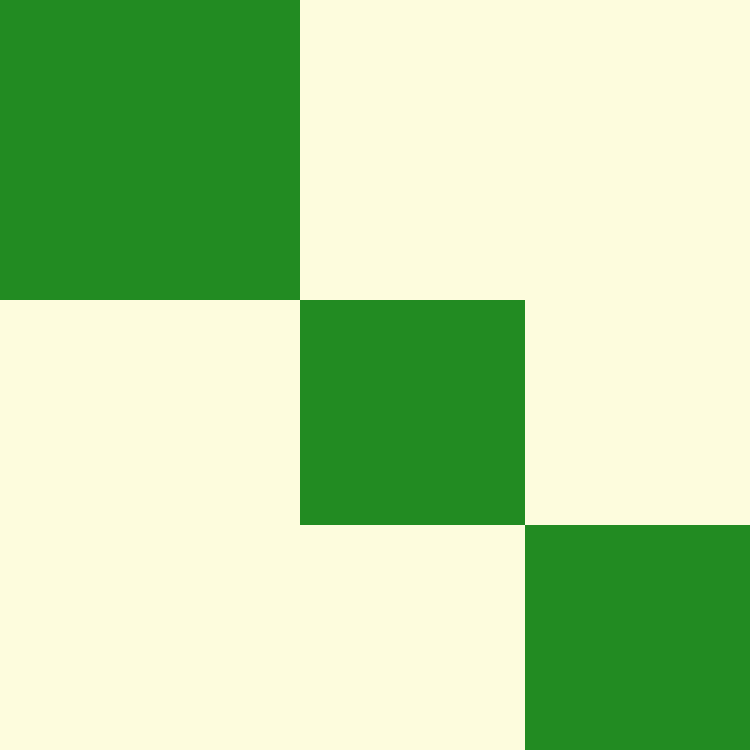}\\
        \vspace{.1cm}
        \includegraphics[width=1\linewidth]{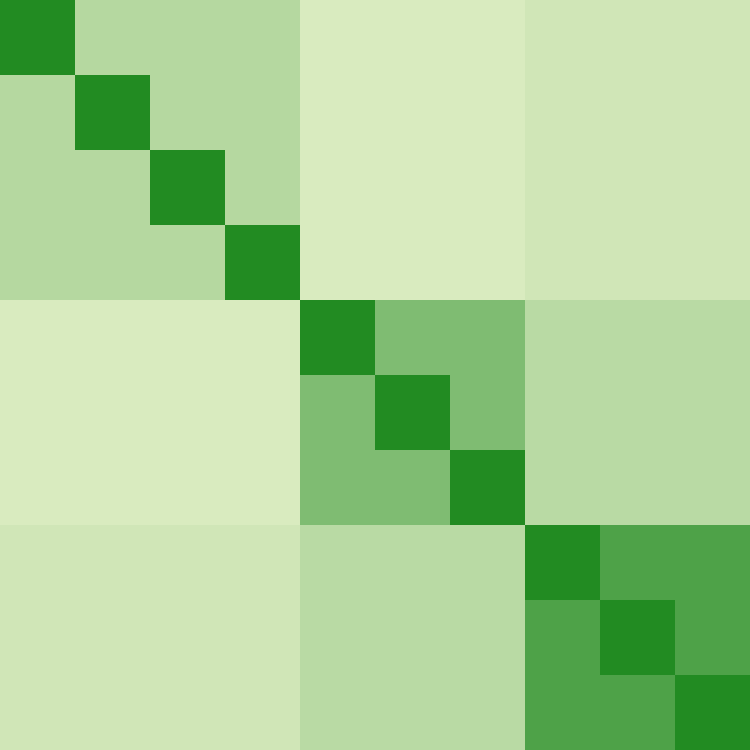}\\
        \vspace{.1cm}
        $(\D^{(3)},\Tt^{(3)})$
	\end{subfigure}
	\begin{subfigure}[ht!]{0.15\textwidth}
    	\centering
        \includegraphics[width=1\linewidth]{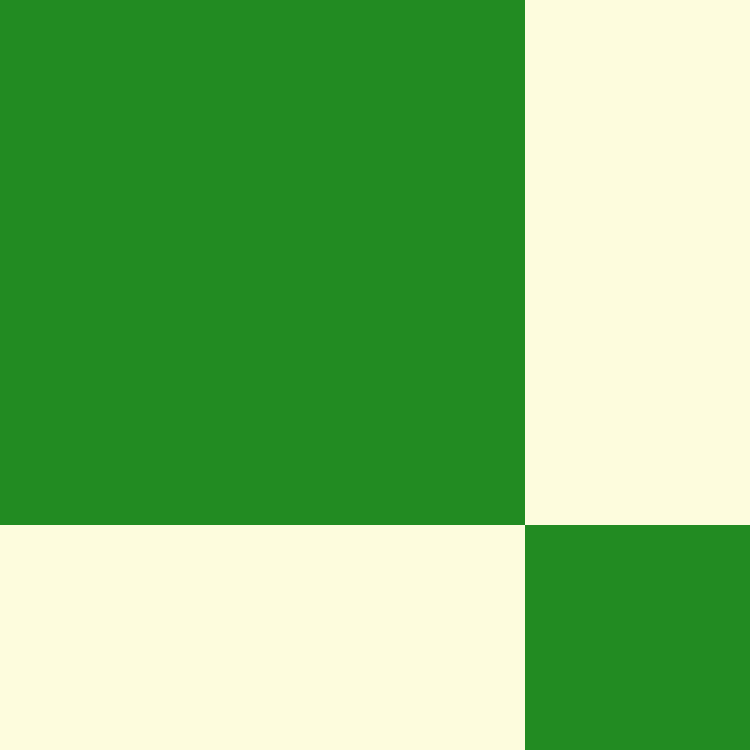}\\
        \vspace{.1cm}
        \includegraphics[width=1\linewidth]{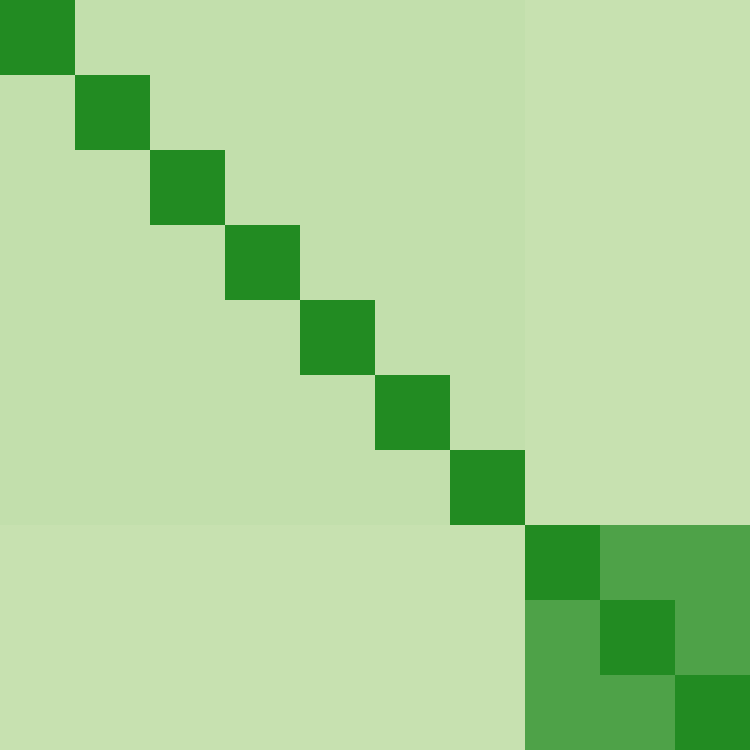}\\
        \vspace{.1cm}
        $(\D^{(2)},\Tt^{(2)})$
	\end{subfigure}
	\begin{subfigure}[ht!]{0.15\textwidth}
    	\centering
        \includegraphics[width=1\linewidth]{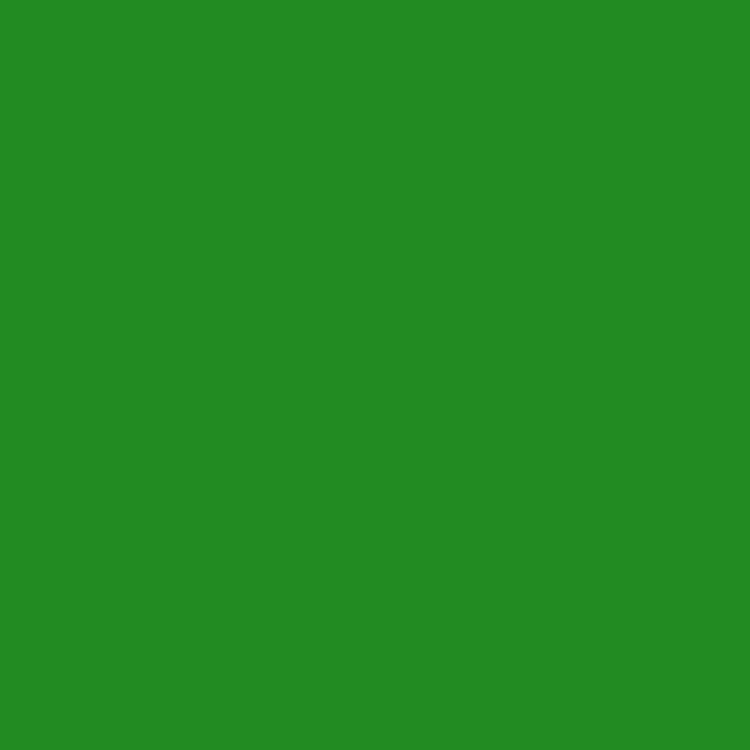}\\
        \vspace{.1cm}
        \includegraphics[width=1\linewidth]{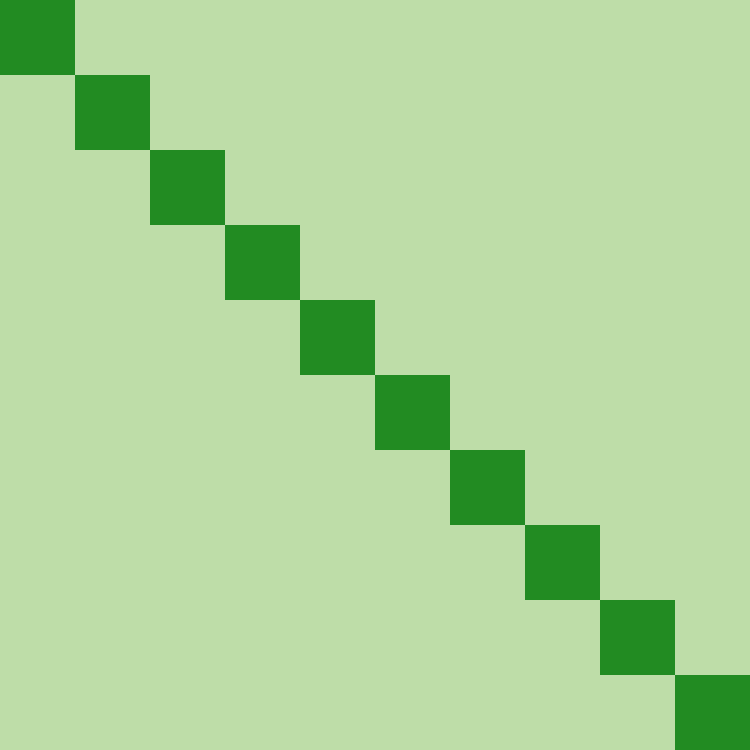}\\
        \vspace{.1cm}
        $(\D^{(1)},\Tt^{(1)})$
	\end{subfigure}
\caption{The pairs of matrices $(\D^{(i)},\Tt^{(i)})$, $i\in\{10,5,\dots,1\}$ corresponding to the path $\mathcal{P}$ obtained from Algorithm \ref{algo:path}  in Example \ref{ex:5}.}\label{fig:ex-path}
\end{figure}
\end{example}

The following corollary to Proposition \ref{prop:qform}, whose proof is deferred to \ref{app:proofs}, establishes that the path includes $\mathcal{G}$ with probability $1$ as $n$ tends to infinity.
\begin{corollary} \label{cor:on-path}
Assume that $\S_\infty$ is positive definite, and that $\G \t = \t$, where $\G =\B \B^+$ and $\B$ is a block membership matrix, holds if and only if $\B$ corresponds to a partition which satisfies the \ref{ass:main}. Let $\mathcal{G}$ be as in Eq.~\eqref{eq:coarsestG}, $\mathcal{P}$ the path returned by Algorithm \ref{algo:path}, and suppose that $w \to 0$ as $n\to\infty$. Then as $n\to \infty$,  $\Pr(\mathcal{G} \in \mathcal{P}) \to 1$.
\end{corollary}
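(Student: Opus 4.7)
The plan is to prove the corollary by backward induction on $i$, showing that the partitions $\mathcal{G}^{(d)},\mathcal{G}^{(d-1)},\ldots,\mathcal{G}^{(K)}$ produced by Algorithm~\ref{algo:path} are, with probability tending to one, all refinements of or equal to $\mathcal{G}$, which forces $\mathcal{G}^{(K)} = \mathcal{G}$ by a size count. Call a partition $\mathcal{G}^*$ \emph{consistent} if either $\mathcal{G}^* = \mathcal{G}$ or $\mathcal{G}^*$ is a refinement of $\mathcal{G}$; by Proposition~\ref{prop:refinement} this is equivalent to $\mathcal{T}_{\mathcal{G}} \subseteq \mathcal{T}_{\mathcal{G}^*}$, and in particular any consistent partition satisfies the \ref{ass:main}. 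The initialization $\mathcal{G}^{(d)} = \{\{1\},\dots,\{d\}\}$ is trivially consistent.

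For the inductive step, suppose $\mathcal{G}^{(i+1)}$ is consistent for some $i \geq K$. Classify the $i(i+1)/2$ candidate mergers of $\mathcal{G}^{(i+1)}$ into two groups: a merger is \emph{correct} when the two fused clusters lie inside a common cluster of $\mathcal{G}$ (so the resulting partition is again consistent and satisfies the \ref{ass:main}), and \emph{incorrect} otherwise. Since $i+1 > K$ forces at least one cluster of $\mathcal{G}$ to contain at least two clusters of $\mathcal{G}^{(i+1)}$, correct mergers exist. Because $\mathcal{G}^{(i+1)}$ satisfies the \ref{ass:main} and $w \to 0$, the appendix result gives $n \St_w^{(i+1)} \to \S_\infty$ element-wise in probability; positive definiteness of $\S_\infty$ ensures that $(\St_w^{(i+1)})^{-1}$ exists for $n$ large. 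Applying Proposition~\ref{prop:qform} with $\hat\S = \St_w^{(i+1)}$ to each candidate merger $\mathcal{G}^*$ then yields: for correct mergers $\mathcal{L}(\tt(\th|\mathcal{G}^*)\mid\th,\St_w^{(i+1)}) \to 0$ in probability, while for incorrect mergers the same quantity converges in probability to $\t^\top (\Id{p} - \G^*) \S_\infty^{-1} (\Id{p} - \G^*) \t$, which is strictly positive by the corollary's standing hypothesis that $\G^* \t \neq \t$ whenever the associated partition fails the \ref{ass:main}. Hence the argmin in Algorithm~\ref{algo:path} selects a correct merger with probability tending to one, so $\mathcal{G}^{(i)}$ is consistent with probability tending to one.

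A union bound over the at most $d - K$ inductive steps ensures that all of $\mathcal{G}^{(d)},\ldots,\mathcal{G}^{(K)}$ are simultaneously consistent with probability tending to one. Since $|\mathcal{G}^{(K)}| = K = |\mathcal{G}|$ and a proper refinement would require strictly more blocks than $\mathcal{G}$ by Definition~\ref{def:condition}, consistency forces $\mathcal{G}^{(K)} = \mathcal{G}$ on this event, so $\Prob{\mathcal{G} \in \mathcal{P}} \to 1$.

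The main obstacle is the probabilistic bookkeeping across the iterations: the variance estimator $\St_w^{(i+1)}$ that drives the selection criterion at step $i$ is itself random and depends on the entire history of choices, so the convergence $n \St_w^{(i+1)} \to \S_\infty$ must be invoked on the high-probability event that $\mathcal{G}^{(i+1)}$ is consistent (outside which the appendix result need not apply). One must then verify that, conditionally on this event, Proposition~\ref{prop:qform} can be applied uniformly across the finite collection of candidate mergers at that step, so that a single union bound controls the failure probability over the full path; this hinges on the finiteness of the set of partitions visited and on the strict positivity of the limits for incorrect mergers being uniform over that finite set.
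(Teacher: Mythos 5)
Your proof is correct and follows essentially the same route as the paper's: both hinge on Proposition~\ref{prop:qform} applied with the block-averaged variance estimator computed at the PEA-satisfying one-step refinement from the previous iteration, on the strict positivity (under the corollary's hypothesis) of the limiting scaled loss for mergers violating the PEA, and on the uniqueness of the coarsest partition to force the selection of $\mathcal{G}$ at step $K$. Your backward induction with a union bound is just different bookkeeping for the paper's single high-probability event, which is constructed by taking a union over the finitely many deterministic pairs (candidate partition, PEA-satisfying refinement of cardinality one more) together with a fixed margin $\epsilon < M/2$ below the minimal positive limit $M$ — exactly the device that resolves the dependence of $\St_w^{(i+1)}$ on the random history that you flag as the main obstacle.
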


From each $\mathcal{G}^{(i)}\in \mathcal{P}$, we can compute the cluster membership matrix $\D^{(i)}$, the improved estimate $\tt^{(i)} = \tt(\th|\mathcal{G}^{(i)})$ as in Theorem \ref{thm:maxlike}, its matrix version $\Tt^{(i)}$, and an estimate $\St^{(i)}$ of $\S$ upon setting $w=0$. Letting $\B^{(i)}$ be the block membership matrix corresponding to $\mathcal{B}_{\mathcal{G}^{(i)}}$ in Eq.~\eqref{eq:B-cal} and $\G^{(i)} = \B^{(i)}{\B^{(i)}}^{+}$, a consistent estimator of the covariance matrix of $\tt^{(i)}$ is then $\G^{(i)} \St^{(i)}\G^{(i)}$, which simplifies to $\G^{(i)} \St^{(i)}$ by Lemma~\ref{lem:gamma-sigma-rel}.

\begin{remark} \em
Consider a partition $\mathcal{G}$ for which the \ref{ass:main} holds, and let $\mathcal{G}^\dag$ be a refinement thereof. Let $\B$ and $\B^\dag$ be the block membership matrices derived from Eq.~\eqref{eq:B-cal} corresponding to $\mathcal{G}$ and $\mathcal{G}^\dag$, respectively, and set $\G = \B \B^+$ and $\G^\dag = \B^\dag (\B^\dag)^+$. Then because $\S^{-1} \in \mathcal{S}_{\mathcal{G}} \subset \mathcal{S}_{\mathcal{G}^\dag}$, where $\mathcal{S}_{\mathcal{G}}$ and $\mathcal{S}_{\mathcal{G}^\dag}$ are as defined in \ref{subapp:sigma-structure}, Lemma~\ref{lem:gamma-sigma-rel} applies and $(\Id{p} - \G^\dag)^\top \S \G^\dag = \boldsymbol{0}$. Furthermore, for $\tt^\dag = \G^\dag \th$ and $\tt = \G \th$, $\G^\dag \tt = \tt$. Hence,
\begin{align}
\label{eq:loss-decomp}
(\th - \tt)^\top \S^{-1} (\th - \tt) = (\th - \tt^\dag)^\top \S^{-1} (\th - \tt^\dag) + (\tt^\dag-\tt)^\top \S^{-1} (\tt^\dag-\tt).
\end{align}
Now set $K= | \mathcal{G}|$. If $\mathcal{G}^{(K)}, \ldots, \mathcal{G}^{(d)}$ is a sequence of partitions such that $\mathcal{G}^{(K)} = \mathcal{G}$, $\mathcal{G}^{(d)} = \{ \{1\},\dots , \{d\}\}$, and for each $i \in\{ K,\dots, d-1\}$, $\mathcal{G}^{(i+1)}$ is a refinement of $\mathcal{G}^{(i)}$. A successive application of Eq.~\eqref{eq:loss-decomp} then gives
$$
	(\th - \tt^{(K)})^\top \S^{-1} (\th - \tt^{(K)})= \sum_{i=K+1}	^{d} (\tt^{(i)} -\tt^{(i-1)})^\top \S^{-1} (\tt^{(i)} -\tt^{(i-1)}) 
$$
In particular, for any $i\in\{K,\dots, d-1\}$, $\mathcal{L}(\tt^{(i)}|\th,\S) \ge\mathcal{L}(\tt^{(i+1)}|\th,\S)$. This motivates that in Algorithm \ref{algo:path}, only two clusters are merged at a time.
\end{remark}

\subsection{Structure selection} \label{subsec:selection}

Proposition \ref{prop:qform} suggests that the loss will increase sharply when the clustering has become too coarse. The following result offers a way to determine when this sharp increase may have occurred.
\begin{proposition} \label{prop:stop}
Let $\mathcal{G}=\{\mathcal{G}_1,\dots, \mathcal{G}_K\}$ be a partition of $\{1,\dots, d\}$ satisfying the \ref{ass:main}, $\B$ the block membership matrix corresponding to $\mathcal{B}_{\mathcal{G}}$ in Eq.~\eqref{eq:B-cal}, and $\G = \B \B^{+}$, $\tt = \G \th$ as in Theorem \ref{thm:maxlike}.
If $\S_\infty$ is positive definite, and $\hat \S$ is any estimator of $\S$ such that { $\hat \S^{-1}$ exists} and, as $n \to \infty$, $ n \hat \S \to \S_\infty$ element-wise in probability, then, as $n\to \infty$,
$
\mathcal{L}(\tt | \th, \hat \S)=
(\th -\tt)^{\top} \hat\S^{-1} (\th -\tt) \rightsquigarrow \chi_{p -L}^2,
$
where $L$ is the number of distinct blocks given in Eq.~\eqref{eq:nblocks}.
\end{proposition}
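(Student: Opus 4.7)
The plan is to reduce $\mathcal{L}(\tt \mid \th, \hat\S)$ to a quadratic form in a standard Gaussian vector whose weight matrix is an orthogonal projection of rank $p - L$. Since $\mathcal{G}$ satisfies the \ref{ass:main}, Proposition~\ref{prop:T-equalities} gives $\t \in \mathcal{T}_{\mathcal{G}}$, i.e., $\t = \B \t^*$ for some $\t^* \in \mathbb{R}^L$; as $\G = \B \B^{+}$ is the orthogonal projector onto the column space of $\B$, this yields $\G \t = \t$ and therefore
\begin{equation*}
\th - \tt = (\Id{p} - \G)\th = (\Id{p} - \G)(\th - \t).
\end{equation*}

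I would then derive the limiting distribution of $\sqrt{n}(\th - \tt)$. Multiplying the last display by $\sqrt{n}$ and invoking Eq.~\eqref{eq:asstau} with the continuous mapping theorem gives $\sqrt{n}(\th - \tt) \rightsquigarrow (\Id{p} - \G) Z$, where $Z \sim \mathcal{N}(\boldsymbol{0}_p, \S_\infty)$. Rewriting $(\th - \tt)^\top \hat\S^{-1} (\th - \tt) = [\sqrt{n}(\th - \tt)]^\top (n\hat\S)^{-1}[\sqrt{n}(\th - \tt)]$ and using $n\hat\S \to \S_\infty$ in probability together with the positive definiteness of $\S_\infty$ (which ensures $(n\hat\S)^{-1} \to \S_\infty^{-1}$), Slutsky's theorem yields
\begin{equation*}
\mathcal{L}(\tt \mid \th, \hat\S) \rightsquigarrow Z^\top (\Id{p} - \G) \S_\infty^{-1} (\Id{p} - \G) Z.
\end{equation*}

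The decisive step, and the main obstacle, is to identify this limit as a $\chi^2_{p-L}$. The \ref{ass:main} enters through the identity $(\Id{p} - \G)^\top \S \G = \boldsymbol{0}$ recorded in the remark following Corollary~\ref{cor:on-path}, which is equivalent to $\G \S = \S \G$ for every $n$; passing to the asymptotic variance gives $\G \S_\infty = \S_\infty \G$, and hence $\G$ also commutes with $\S_\infty^{1/2}$ and $\S_\infty^{-1/2}$ (as these are functions of $\S_\infty$). Writing $Z = \S_\infty^{1/2} U$ with $U \sim \mathcal{N}(\boldsymbol{0}_p, \Id{p})$ and exploiting this commutativity together with the idempotency $(\Id{p} - \G)^2 = \Id{p} - \G$, the weight matrix collapses to $\Id{p} - \G$, so the limit becomes $U^\top (\Id{p} - \G) U$. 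Since the columns of $\B$ are disjoint block indicators and hence linearly independent, $\mathrm{rank}(\B) = L$, so $\Id{p} - \G$ is an orthogonal projection of rank $p - L$; the classical Cochran-type result then identifies $U^\top (\Id{p} - \G) U$ as $\chi^2_{p-L}$, completing the argument.
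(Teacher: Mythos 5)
Your proposal is correct and follows essentially the same route as the paper's proof: the same reduction $\th-\tt=(\Id{p}-\G)(\th-\t)$ via $\G\t=\t$, the same Slutsky step using $n\hat\S\to\S_\infty$, and the same crucial use of the \ref{ass:main}-induced commutation of $\G$ with $\S_\infty$ (Proposition~\ref{prop:equal2} and Lemma~\ref{lem:gamma-sigma-rel}) to collapse the weight matrix to the idempotent $\Id{p}-\G$. The only cosmetic difference is that you standardize $Z=\S_\infty^{1/2}U$ and invoke the classical projection (Cochran-type) result with $\mathrm{rank}(\B)=L$, whereas the paper applies a general theorem on Gaussian quadratic forms and computes $\mathrm{tr}(\G)=L$; the two are equivalent.
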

At each iteration of Algorithm~\ref{algo:path}, $\S$ is estimated by $\St_w^{(i)}$. Proposition \ref{prop:stop} and Eq.~\eqref{eq:Stildeconv} suggest using $\mathcal{L}(\tt^{(i)} | \th, \St_w^{(i)})$ to get a rough idea of when too much clustering has been applied through
\begin{align} 
\label{eq:alpha}
	\alpha^{(i)} = \Pr \Big\{ \chi_{p - L_i}^2 > \mathcal{L}(\tt^{(i)} \mid \th, \St_w^{(i)}) \Big\},
\end{align}
where $L_i$ is the number of blocks given in Eq.~\eqref{eq:nblocks} corresponding to the $i$th partition $\mathcal{G}^{(i)}$.
For $n$ large enough, we expect that a sharp decrease in $\alpha^{(i)}$ will occur at the first $i$ such that $\mathcal{T}_{\mathcal{G}} \not\subseteq \mathcal{T}_{\mathcal{G}^{(i)}}$, i.e., when the $\G$ matrix corresponding to $\mathcal{G}^{(i)}$ becomes inadmissible. We do not use the criterion \eqref{eq:alpha} as a formal $p$-value, but rather as a tool that can help with structure selection. In Appendix C.2 
in the Online Supplement, we present a naive automated selection procedure based on Eq.~\eqref{eq:alpha}, which we refer to as Algorithm 
C1.

\begin{example}\label{ex:6}\em
Consider again the random sample of size $n=70$ from $\X$ in Example \ref{ex:3}.  We computed $\alpha^{(i)}$, $i\in\{1,\dots,10\}$, given by Eq.~\eqref{eq:alpha} for the path obtained with Algorithm \ref{algo:path} in Example \ref{ex:5}. As can be seen in Figure~\ref{fig:ex-alphas}, the gap between $\alpha^{(3)}$ and $\alpha^{(2)}$ strongly suggests that the best structure is $\mathcal{G}^{(3)}$, which is indeed the true structure in this case.
\begin{figure}[t!]
\centering
        \includegraphics[width=.7\linewidth]{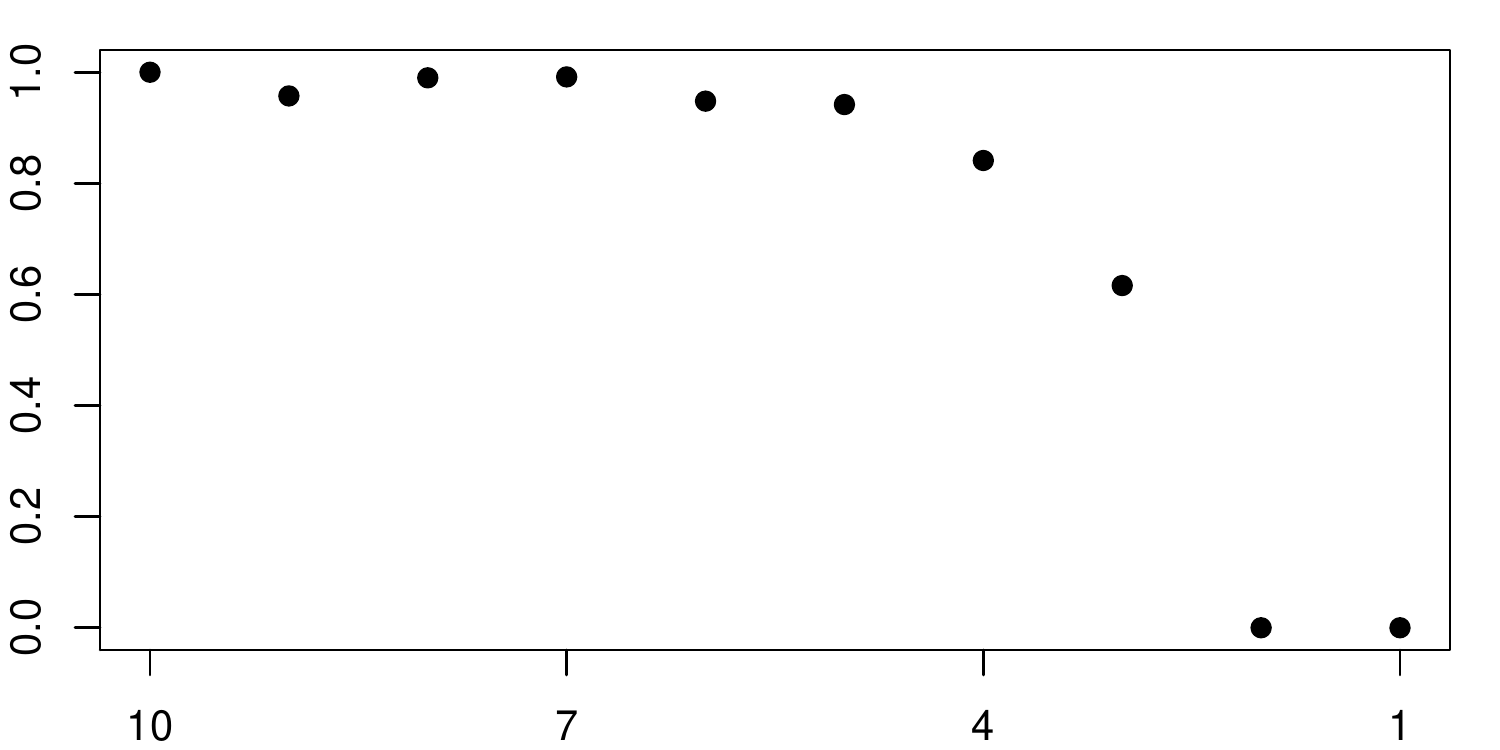}
\caption{The pairs $(i,\alpha^{(i)})$, $i \in\{ 10, \dots,1\}$, computed in Example \ref{ex:6}.}\label{fig:ex-alphas}
\end{figure}
\end{example}

\section{Estimation of linear correlation} \label{sec:bijective}

We now show how the \ref{ass:main} can be used to obtain improved estimates of the classical linear correlation matrix $\bss{P}$ with entries $\bss{P}_{ij} = {\rm cor}(X_i, X_j)$, $i,j \in\{ 1,\dots, d\}$ when $\X$ is elliptical. Recall that an absolutely continuous random vector $\bss{X}$ follows an elliptical
distribution with mean vector $\bss{\mu} \in \mathbb{R}^d$, positive definite $d\times d$ dispersion matrix $\bss{D}$ and density generator $g:[0,\infty)\to[0,\infty)$, in notation $\X \sim \mathcal{E}( \bss{\mu}, \bss{D}, g)$, if its density $f$ satisfies, for all $\bss{x} \in \mathbb{R}^d$,
\begin{equation}\label{eq:denselli}
f(\bss{x})=|\bss{D}|^{-1/2} g
\{(\bss{x}-\boldsymbol{\mu})^\top
\bss{D}^{-1}(\bss{x}-\bss{\mu})/2 \},
\end{equation}
where $|\bss{D}|$ denotes the determinant of $\bss{D}$  \cite{Fang/Kotz/Ng:1990,Fang/Zhang:1990}. Eq.~\eqref{eq:denselli} means that the level curves of $f$ are concentric ellipses centred at $\bss{\mu}$. Well-known examples of elliptical distributions are the multivariate Normal, Student $t$ or generalized hyperbolic distributions; for their use in finance and risk modeling, see \cite{McNeil/Frey/Embrechts:2015}.

Note that when $\X \sim  \mathcal{E}( \bss{\mu}, \bss{D}, g)$, $\bss{\mu}$ and $\bss{D}$ are not necessarily the mean and covariance matrix of $\X$, respectively; these moments may not even exist. However, if $\E(X_i^2) < \infty$ for all $i \in \{1,\dots, d\}$, ${\rm E}(\X) = \bss{\mu}$ and  there exists a constant $c >0$ such that ${\rm cov}(\X) = c \bss{D}$, see Theorem 2.6.4 in  \cite{Fang/Zhang:1990}. Consequently, if the linear correlation matrix $\bss{P}$ of $\X$ exists, one has, for all $i,j\in\{1,\dots, d\}$,
$\bss{P}_{ij} = \bss{D}_{ij}/\sqrt{\bss{D}_{ii}\bss{D}_{jj}}$.
Surprisingly, for all $i\neq j \in \{1,\dots, d\}$, the correlation coefficient $\bss{P}_{ij}$ is in one-to-one relationship with Kendall's correlation $\tau(X_i, X_j)$ \cite{Fang/Fang:2002, Lindskog/McNeil/Schmock:2002}, viz.
\begin{align}\label{eq:tauelli}
\bss{P}_{ij} =  \sin  ( \pi\T_{ij}/2 ).
\end{align}
Because the map in Eq.~\eqref{eq:tauelli}  is a bijection, it can be used to construct an estimator of $\bss{P}$, given, for all $i \neq j\in\{1,\dots, d\}$, by $\hat{\bss{P}}_{ij} = \sin(\pi \hat \T_{ij} /2)$. As illustrated by \cite{Lindskog/McNeil/Schmock:2002}, the resulting estimator $\hat{\bss{P}}$ can be considerably more efficient than the sample correlation matrix, especially when the margins of $\bs{X}$ are heavy-tailed. Recently, $\hat{\bss{P}}$ has been employed, e.g., in the context of nonparanormal graphical models \citep{Liu/Han/Yuan/Lafferty/Wasserman:2012,Xue/Zou:2012}, and Gaussian or elliptical copula regression \citep{Cai/Zhang:2017,Zhao/Genest:2017}.

{Now suppose that $\mathcal{G}$ is a partition of $\{1,\dots, d\}$ so that the \ref{ass:main} holds. Because $\bs{X}$ is elliptical, this is equivalent to $\T \in \mathcal{T}_{\mathcal{G}}$, or, in view of Eq.~\eqref{eq:tauelli}, to $\bss{P} \in \mathcal{T}_{\mathcal{G}}$. Because $\Tt$ is a more efficient estimator of $\T$ by Theorem \ref{thm:reduced-variance}, the delta method implies that $\tilde{\bss{P}} \in  \mathcal{T}_{\mathcal{G}}$ obtained by using $\Tt_{ij}$ in Eq.~\eqref{eq:tauelli} is a more efficient estimator of $\bss{P}$ than $\hat{\bss{P}}$. Moreover, if $\bss{P}$ is positive definite, it follows from Lemma~\ref{lem:T-inv} in \ref{subapp:proof-T-inv} that the precision matrix $\bs{\Omega} = \bss{P}^{-1}$ has the same block structure as $\bss{P}$, i.e., $\bs{\Omega} \in \mathcal{T}_{\mathcal{G}}$. As an estimator of $\bs{\Omega}$, one may thus use $\tilde{\bs{\Omega}}=\tilde{\bss{P}}^{-1}$  directly if the latter is positive definite; it then follows from Lemma~\ref{lem:T-inv} that $\tilde{\bs{\Omega}} \in \mathcal{T}_{\mathcal{G}}$. Otherwise, $\tilde{\bss{P}}$ can first be made positive definite using one of the shrinkage methods described, e.g., in \cite{Rousseeuw/Molenberghs:1993}; its inverse can be further improved by averaging out the entries block-wise to obtain a matrix in $\mathcal{T}_{\mathcal{G}}$. 

\section{Simulation study} \label{sec:simulation}

We first investigate whether the true cluster structure is on the path $\mathcal{P}$ returned by Algorithm \ref{algo:path} and how often the criterion (\ref{eq:alpha}) selects the true cluster structure given the latter is in $\mathcal{P}$. The full description and results of this simulation are provided in the Online Supplement; here we only present the main conclusions for the sake of brevity.

Based on $500$ simulation runs and samples of various sizes from the Normal and the Cauchy copula with $d=20$ and Kendall's rank correlation matrix $\T \in \{ \T_1, \ldots, \T_4\}$ displayed in Figure~\ref{fig:gen-mats}, we found that irrespectively of the dependence structure, the number of paths that contain the true $\mathcal{G}$ given by Eq.~\eqref{eq:coarsestG} increases with the sample size. Also, $\mathcal{G} \in \mathcal{P}$ more often when the blocks are clearly separated, e.g., when $\T$ equals $\T_1$ or $\T_2$. The shrinkage parameter $w$ has little impact unless the sample size is small and $w\in \{0, 0.25\}$; the optimal choice overall seems to be $w=0.75$.

We also assessed the selection of the most appropriate cluster structure with Algorithm C1 based on Eq.~\eqref{eq:alpha}. As expected, the percentage of simulations in which the latter algorithm identifies $\mathcal{G}$ given by Eq.~\eqref{eq:coarsestG}  gets closer to $1-\alpha$ as the sample size increases, leading to the recommendation to choose a small value of $\alpha$ once $n$ is sufficiently large. However, the meaning of large depends on how well the blocks in $\T$ are separated. For example, when $\T = \T_1$ and $\T=\T_3$, $\alpha=0.05$ becomes the best option when $n=125$, and $n=500$, respectively. For smaller sample sizes, it is better to choose a larger value of $\alpha$.
\begin{figure}[t!]
	\centering
	\begin{minipage}{.05\textwidth}
		\centering
	   	\includegraphics[width=1\linewidth]{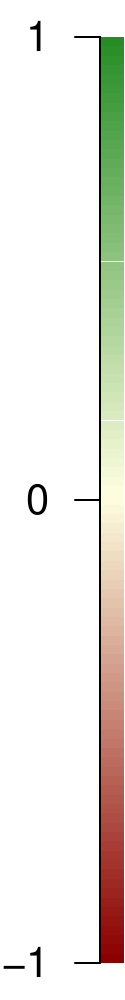}\\
	\end{minipage}
	\begin{minipage}{.9\textwidth}
	\centering
	\begin{minipage}{.23\textwidth}
		\centering
    	\includegraphics[width=1\linewidth]{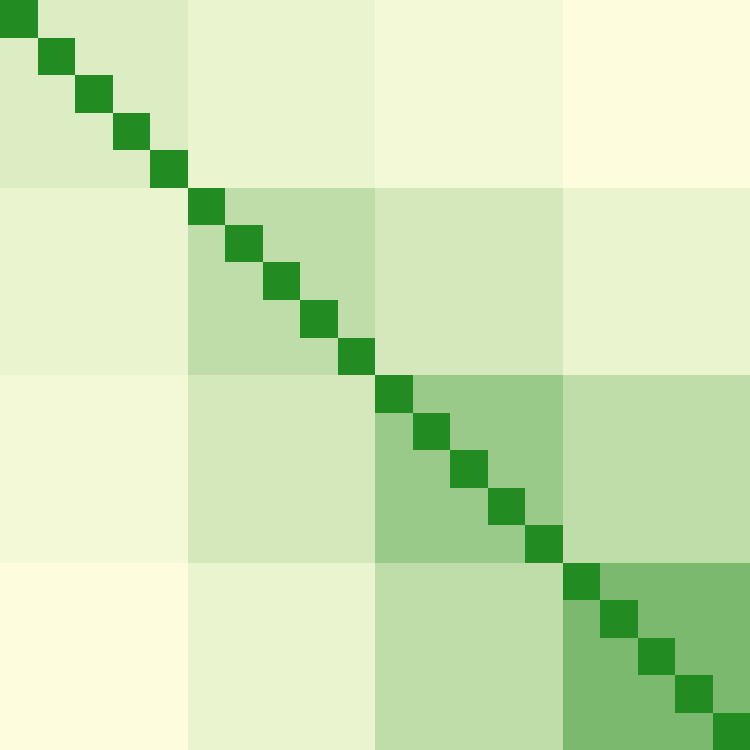}\\
    	$\T_1$\\
	\end{minipage}
	\begin{minipage}{.23\textwidth}
		\centering
    	\includegraphics[width=1\linewidth]{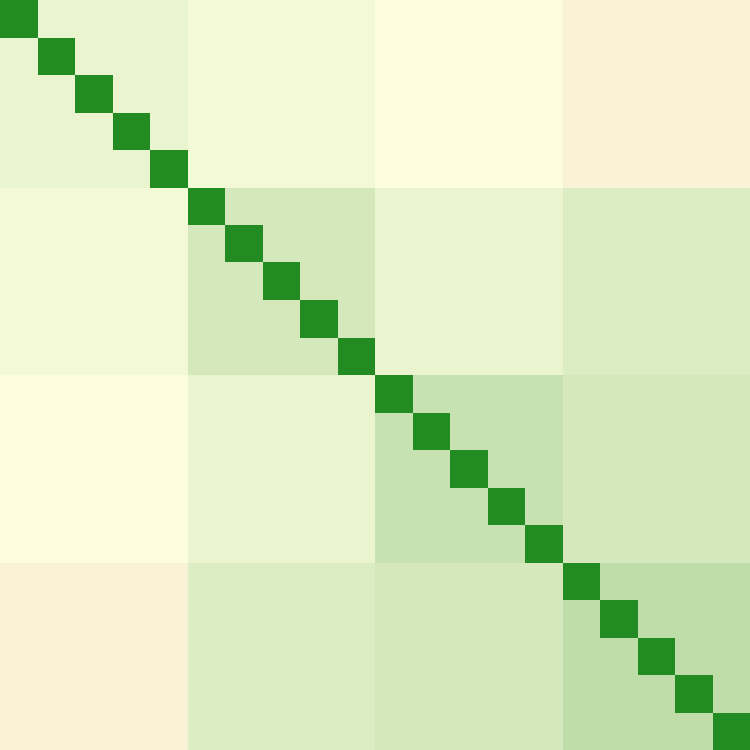}\\
    	$\T_2$
	\end{minipage}
	\begin{minipage}{.23\textwidth}
		\centering
    	\includegraphics[width=1\linewidth]{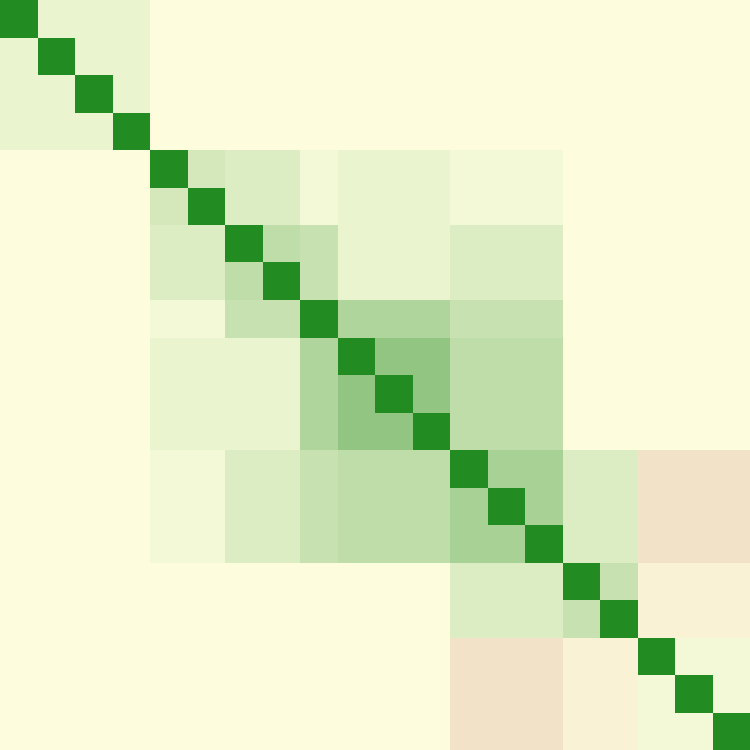}\\
    	$\T_3$
	\end{minipage}
	\begin{minipage}{.23\textwidth}
		\centering
    	\includegraphics[width=1\linewidth]{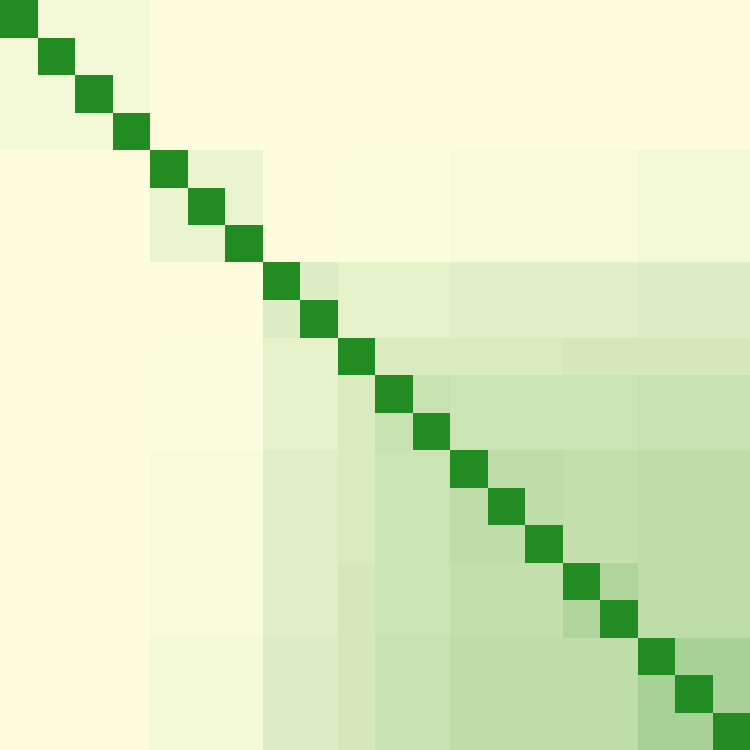}\\
    	$\T_4$
	\end{minipage}
	
	\vspace{.15cm}
	\begin{minipage}{.23\textwidth}
		\centering
    	\includegraphics[width=1\linewidth]{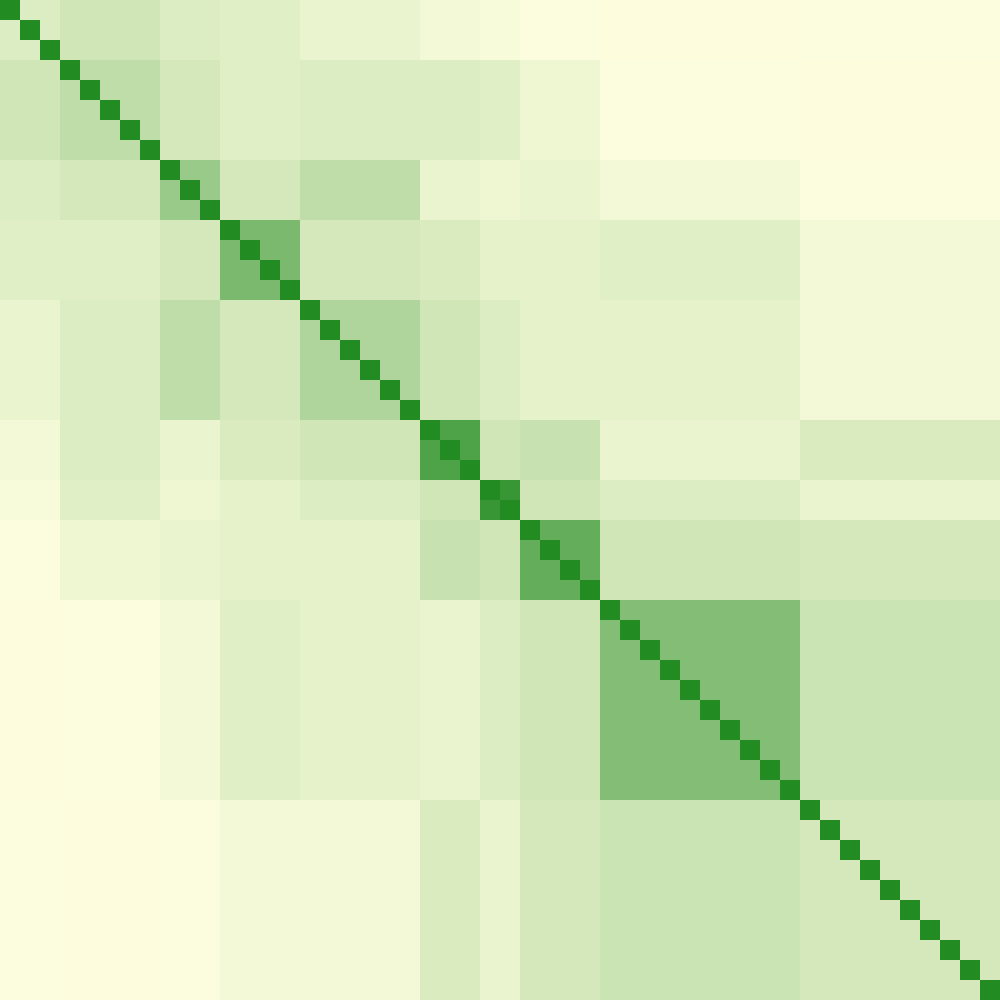}\\
    	$\T_5$
	\end{minipage}
	\begin{minipage}{.23\textwidth}
		\centering
    	\includegraphics[width=1\linewidth]{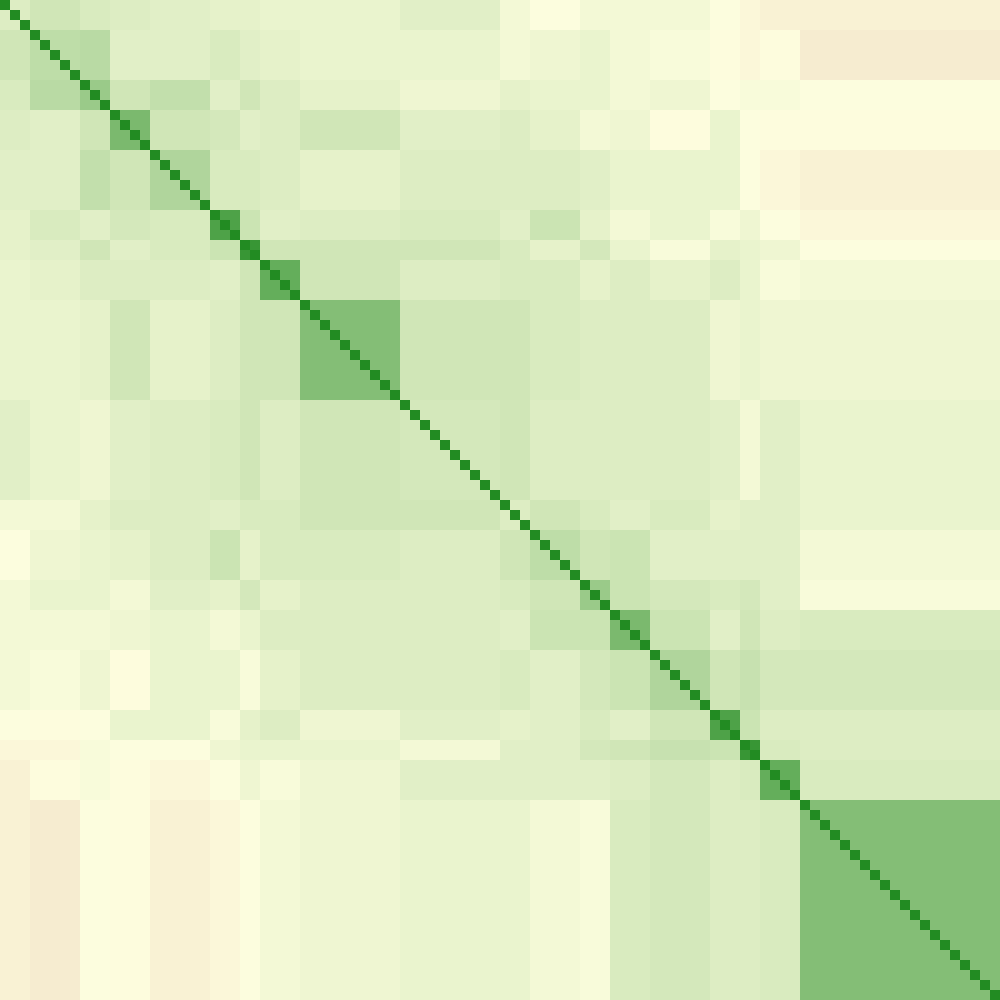}\\
    	$\T_6$
	\end{minipage}
	\begin{minipage}{.23\textwidth}
		\centering
    	\includegraphics[width=1\linewidth]{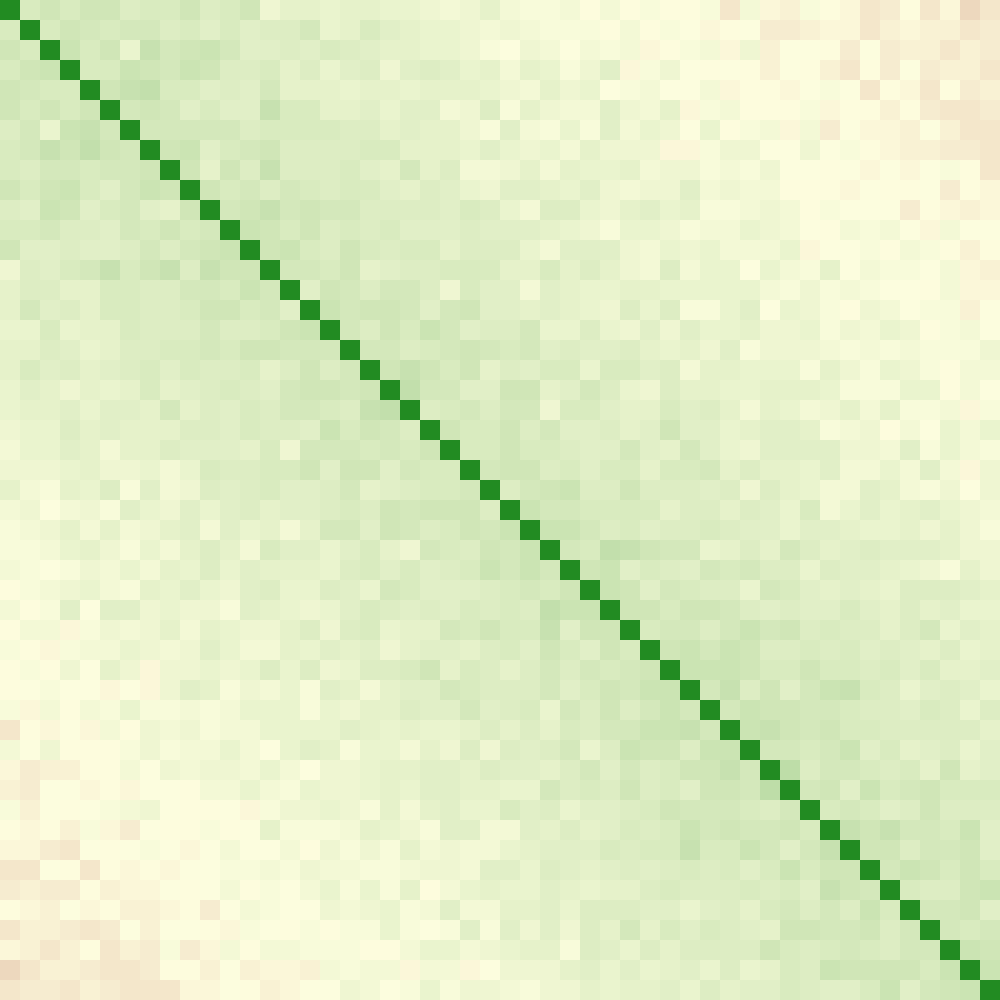}\\
    	$\T_7$
	\end{minipage}
	\begin{minipage}{.23\textwidth}
		\centering
    	\includegraphics[width=1\linewidth]{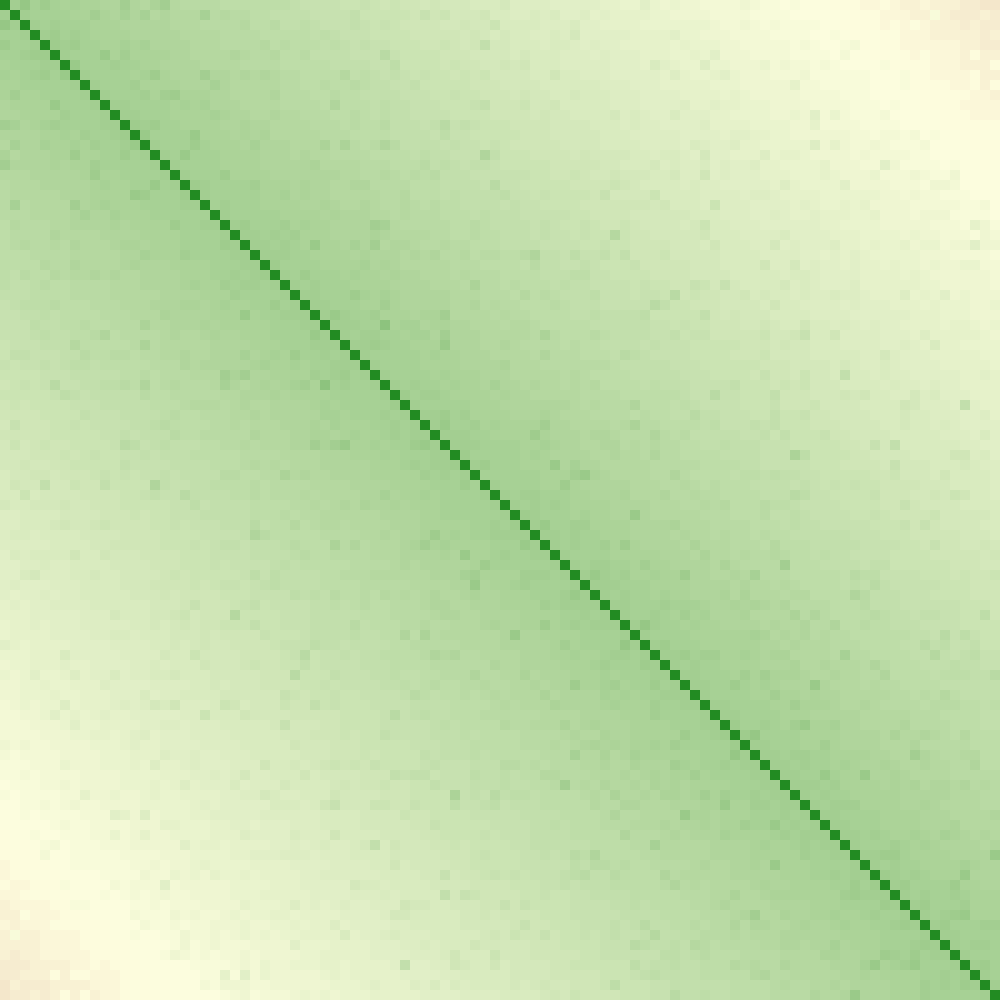}\\
    	$\T_8$
	\end{minipage}
	\end{minipage}
	\caption{The matrices $\T_1,\dots, \T_8$ used in the simulation study.}\label{fig:gen-mats}
\end{figure}
Even though Algorithm \ref{algo:path} in combination with Algorithm C1 may not identify the true coarsest cluster structure properly, the estimator $\Tt$ that it returns is still considerably more efficient than $\Th$. We explore this by first calculating the oracle statistic
\begin{align*}
\nu_2 = 1 -  \min \big \{ ||\tt^{(j)} - \t||_2^2 : {\mathcal{G}^{(j)} \in \mathcal{P}} \big \}/||\th - \t||_2^2.
\end{align*}
To assess the performance of Algorithm C1, we also calculate
\begin{align*} 
\xi(\alpha) = 1 - ||\t^\bullet - \t||_2^2/||\th - \t||_2^2,
\end{align*}
where $\t^\bullet$ corresponds to the cluster structure selected by Algorithm C1 with level $\alpha$. Note that  $\nu_2$ and $\xi(\alpha)$ are always between $0$ (worst) and $1$ (best). The values of these statistics averaged over the $500$ simulation runs are $\bar \nu_2$ and $\bar \xi(\alpha)$.
\begin{table}[t!]
\small
	\caption{Average values of $\nu_2$ and $\xi(\alpha)$ for various $\alpha$ for $500$ simulation runs with $w=0.75$.} \label{tab:means}	
	\centering
	\begin{tabular}{*{1}{l}*{15}{c}}
	\toprule
& \multicolumn{3}{c}{$\T_1$} & & \multicolumn{3}{c}{$\T_2$} & & \multicolumn{3}{c}{$\T_3$} & & \multicolumn{3}{c}{$\T_4$}\\
\cmidrule(lr){2-4} \cmidrule(lr){6-8} \cmidrule(lr){10-12} \cmidrule(lr){14-16}
		Measure\textbar$n$ & 125 & 250 & 500 & $\quad$ & 125 & 250 & 500 & $\quad $ & 125 & 250 & 500 &$ \quad$ & 125 & 250 & 500\\
		\midrule
		$\bar{\nu}_2$  & 0.65 & 0.66 & 0.67 & & 0.70 & 0.80 & 0.80 & & 0.51 & 0.59 & 0.63 & & 0.51 & 0.52 & 0.56\\
		\midrule
		$\bar\xi(0.05)$	 & 0.61 & 0.66 & 0.67 & & 0.61 & 0.78 & 0.79 & & 0.35 & 0.47 & 0.61 & & 0.37 & 0.39 & 0.44\\
$\bar\xi(0.10)$	 & 0.61 & 0.65 & 0.66 & & 0.63 & 0.78 & 0.78 & & 0.38 & 0.50 & 0.61 & & 0.39 & 0.41 & 0.46\\
$\bar\xi(0.25)$	 & 0.59 & 0.61 & 0.63 & & 0.64 & 0.75 & 0.75 & & 0.41 & 0.52 & 0.59 & & 0.41 & 0.42 & 0.47\\
$\bar\xi(0.50)$	 & 0.53 & 0.55 & 0.54 & & 0.61 & 0.70 & 0.69 & & 0.43 & 0.51 & 0.53 & & 0.41 & 0.42 & 0.45\\
	\end{tabular}
	\begin{tabular}{*{1}{l}*{15}{c}}
	\toprule
& \multicolumn{3}{c}{$\T_5$} & & \multicolumn{3}{c}{$\T_6$} & & \multicolumn{3}{c}{$\T_7$} & & \multicolumn{3}{c}{$\T_8$}\\
\cmidrule(lr){2-4} \cmidrule(lr){6-8} \cmidrule(lr){10-12} \cmidrule(lr){14-16}
		Measure\textbar$n$ & 25 & 50& 100& $\quad$ & 25 & 50 & 100 & $\quad $ & 25 & 50 & 100 &$ \quad$ & 25 & 50 & 100\\
		\midrule		
		$\bar{\nu}_2$  & 0.47 &0.52 &0.54& & 0.48 & 0.46 &0.48 & & 0.66 & 0.51 & 0.37& & 0.42 & 0.38 & 0.34\\
		\midrule
		$\bar\xi(0.05)$	 & 0.30 & 0.31& 0.34& & 0.40&0.31&0.29 & & 0.60 & 0.47 & 0.33  & & 0.31 & 0.20 & 0.11 \\
$\bar\xi(0.50)$	 & 0.34 & 0.37 & 0.38 & & 0.40 & 0.33 & 0.31 & & 0.59 & 0.48 & 0.34 & & 0.32& 0.22& 0.13\\
$\bar\xi(0.95)$	 & 0.37& 0.40& 0.44 & & 0.41 & 0.34 & 0.33 & & 0.58  & 0.47  & 0.32  & &0.33  & 0.23 & 0.15 \\
  	\bottomrule
	\end{tabular}
\end{table}

The results for the simulation design detailed in Appendix C.1 
of the Online Supplement with the Normal copula are displayed in Table~\ref{tab:means}; additional results for the Cauchy copula and other values of $w$ may be found in Tables C1 and C2
 in the Online Supplement. First note that the results for distinct matrices $\T_i$ are not directly comparable, because the maximum error reduction that can possibly be achieved depends on $\T_i$. By looking at  $\bar{\nu}_2$, we see that the {mean squared error of $\th$} can be cut substantially by choosing the best structure on the path $\mathcal{P}$ returned by Algorithm \ref{algo:path} in each of the scenarios. The values of $\bar \xi(\alpha)$ suggest that Algorithm C1 often selects a structure that does nearly as well in terms of error reduction. The reactivity to changes in $\alpha$ is low, which is a sign that the gap in $\alpha^{(i)}$ between reasonable and poor models is often large. Algorithm C1 performs best when $\alpha$ is small once the sample size is large enough. Even if Algorithms \ref{algo:path} and C1 do not select the true coarsest partition $\mathcal{G}$ that satisfies \ref{ass:main}, Table \ref{tab:means} illustrates that the resulting estimator $\Tt$ can outperform $\Th$ considerably. This is particularly apparent in the block pertaining to $\T_4$. Even though hardly any path goes through $\mathcal{G}$ in this case (viz. Table C1 
 in the Online Supplement), Table \ref{tab:means} shows that the structure selected by Algorithms \ref{algo:path} and C1 leads to a substantial reduction of mean squared error.

The above findings suggest that selecting a structure simpler than the true one might be beneficial when $\Th$ is extremely noisy because even if it introduces a small bias, it reduces the variance considerably. To demonstrate this further, we challenge Algorithms \ref{algo:path} and C1 by considering $d\in \{50,100\}$ and four additional matrices $\T_5,\dots, \T_8$ also displayed in Figure~\ref{fig:gen-mats}. $\T_5$ and $\T_6$ correspond to $50$ variables and have a block structure with 10 and 19 blocks, respectively. The matrices $\T_7$ and $\T_8$ are $100 \times 100$ noisy versions of Toeplitz matrices with a constant decay in their entries as one moves away from the diagonal. This means that $\T_7$ and $\T_8$ are unstructured, i.e., only $\{\{1\},\dots, \{d\}\}$ satisfies the \ref{ass:main}. We also consider rather small sample sizes, viz.~$n \in \{25,50,100\}$. Because better results are achieved for higher values of the shrinkage parameter $w$ in small samples, we set $w=1$.

The results for $500$ simulation runs are also summarized in Table~\ref{tab:means}. They suggest that the method still performs well in higher-dimensional settings, even when $d$ is larger than $n$. In accordance with Table C3 
in the Online Supplement, it is better to choose a more conservative value for $\alpha$ in small samples, although the sensitivity to $\alpha$ seems small. The results are particularly interesting for $\T_7$ and $\T_8$, which do not have a block structure. However, they still possess a certain structure that Algorithm \ref{algo:path} is able to capture and $\Tt$ performs better than $\Th$ due to a bias-variance tradeoff when $n$ is small. Note that because $K=d$ in both cases, there is no gain asymptotically, so the improvement should decrease with the sample size. This is already apparent for $\T_7$ and $\T_8$ for the sample sizes considered here.

\section{Application to stocks returns} \label{sec:stock}

Consider the daily closing value of all $d = 107$ stocks included in the NASDAQ100 index from January 1 to September 30, 2017 (Source: Google Finance, December 27, 2017). The information on the components of the index (company name, sectors and industries) were taken from \url{www.nasdaq.com}. Rather than clustering together stocks whose returns have a similar distribution, our goal was to identify a block structure in the sample Kendall rank correlation matrix using the methodology proposed here. As is the norm in joint modeling of financial time series, we computed the log returns for series of stocks and applied our methodology to the series of residuals from a fitted stochastic volatility model \citep{Patton:2006,Patton:2012,Remillard:2013}. The stochastic volatility model that yielded satisfactory results was the GARCH$(1,1)$ model. This produced $n = 187$ residuals for each of the $d=107$ stocks. 

Note that even if the stochastic volatility model is appropriate, the residuals are not iid. However, as shown in Corollary 2 in \cite{Remillard:2017}, the empirical copula process based on the residuals has the same asymptotic behavior as the empirical copula process for iid observations. As a result, Eq.~\eqref{eq:asstau} continues to hold with the very same asymptotic variance $\S_\infty$. Even if $\S$ may not be the exact finite-sample variance of $\th$ based on the residuals, the result of \cite{Remillard:2017} implies that Eqs. \eqref{eq:consSigma} and \eqref{eq:Stildeconv} remain true. Consequently, the methodology developed here can be used as is.

For computational purposes and because $n/d = 1.75$ is small, we applied Algorithm~\ref{algo:path} with $w=1$. We then computed $\alpha^{(i)}$ for $i \in \{ 107,\dots,1\}$, as given by Eq.~\eqref{eq:alpha}, again with $w=1$. An excerpt of the plot is shown in Figure~\ref{fig:stocks-alphas}; $\alpha^{(i)}$ for $i \geq 20$ and $i \leq 10$ were essentially equal to $1$ and $0$, respectively. Among the $107$ candidate structures produced by Algorithm \ref{algo:path}, $\mathcal{G}^{(17)}$, $\mathcal{G}^{(16)}$ and $\mathcal{G}^{(15)}$ stand out as the most interesting, because they precede drops in $\alpha^{(i)}$ that are typical of questionable cluster mergers. In effect, the selection procedure is not very sensitive to the arbitrary choice of the value of $\alpha$: any value of $\alpha$ between 0.01 and 0.99 leads to one of $\mathcal{G}^{(17)}$, $\mathcal{G}^{(16)}$ and $\mathcal{G}^{(15)}$ among the 107 structures that were returned by Algorithm \ref{algo:path}. The reduction {of the number of unknown pair-wise correlations}{ is striking; for instance if we use $\mathcal{G}^{(16)}$ we go from $d(d-1)/2=5671$ pair-wise correlations down to $K(K+1)/2=136$.

\begin{figure}[t!]
\centering
	\includegraphics[width=.7\linewidth]{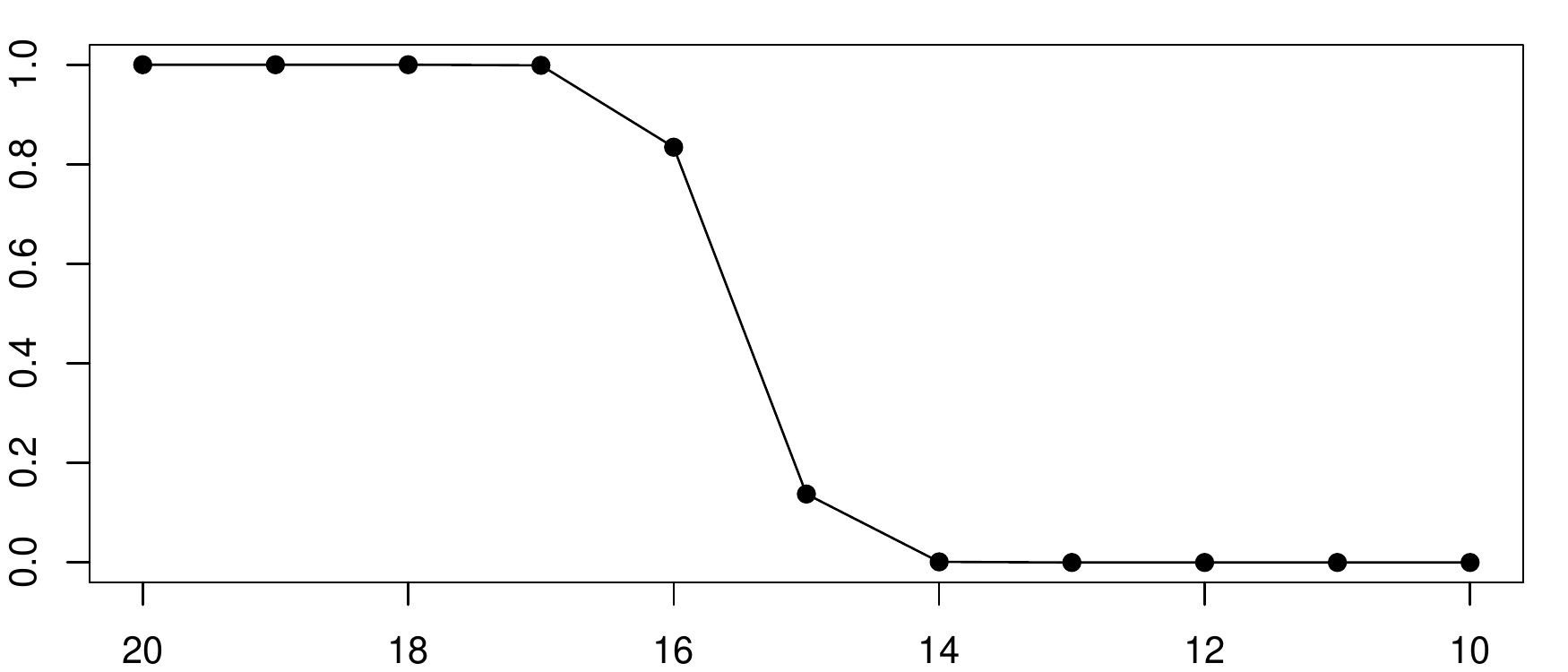}
\caption{The pairs $(i,\alpha^{(i)})$ for $i \in \{ 20,\dots,10\}$ corresponding to the path returned by Algorithm \ref{algo:path} applied to the stock returns residuals.}\label{fig:stocks-alphas}
\end{figure}

The left panel in Figure  \ref{fig:stocks-matrices} shows the empirical Kendall's tau matrix $\Th$ when the variables are labeled consecutively as in the NASDAQ100 index. Clearly, any pattern is very hard to discern from this matrix. The middle panel of the same figure shows $\Th$ once the variables have been relabeled according to the clusters in $\mathcal{G}^{(16)}$, i.e., so that the cluster membership matrix is block-diagonal. Now a structure is beginning to emerge. The right panel displays the improved estimator $\Tt^{(16)}$ that takes the cluster structure into account. The benefit of using Algorithm \ref{algo:path} is clearly visible here.

It may also be of interest to explore whether the clusters in $\mathcal{G}^{(16)}$ have an interpretation; a detailed list of the latter is provided in Appendix D
 of the Online Supplement. Most clusters have intuitive interpretations in terms of business sectors or known affiliations, particularly when the intra-cluster correlation is large.

\section{Conclusion} \label{sec:conclusion}

We have developed a new approach to identify {a block structure within the matrix of Kendall correlations}. Aside from a mild partial exchangeability assumption, the method is completely nonparametric and does not require any additional assumption on the joint or marginal distributions of the variables, insofar as they are continuous. This contribution has the potential to be useful in all areas where {dependence between variables is of interest}. While goodness-of-fit tests can only be performed a posteriori, the proposed method can {serve as a model specification guide at  the early stages of the data analysis}.

We have formally shown that taking advantage of the block structure of the correlation matrix can lead to improved inference on the correlation coefficients. Not only are the new estimators consistent and asymptotically Normal, but their asymptotic variance is smaller than that of the empirical Kendall coefficients. This is important, as {the latter} tend to be extremely noisy when $d$ gets large. Our simulations have shown that the new estimator has better mean squared error in finite samples even when the correlation structure is not known a priori and has to be estimated from data.

We have shown that as the sample size gets large, this algorithm is based on a loss function that will assign negligible loss to merges that agree with the true block structure and large loss to merges that do not, which ensures that the agglomerative process will yield a set of $d$ potential structures that includes the true one. We have also considered how to identify reasonable structures among those proposed by the algorithm. The asymptotic properties of the loss function and the simulation study indicate that this tool is adequate in both large and small samples. Block-exchangeable structures can thus be used as general-purpose shrinkage targets when $n$ is small compared to $d$.

The loss function used in the agglomerative algorithm depends on a variance matrix $\S$ that must be estimated. We have shown that under the Partial Exchangeability Assumption, this matrix and its inverse share a common structure property. We exploit this property to improve the consistent plug-in estimator $\Sh$ of $\S$. The new estimator {$\St_w$  is shown to possess} the same structural properties as $\S$.

Future work may follow from this proposal. The criterion $\alpha^{(i)}$ is intended as a guide and not as a formal test statistic or model selection criterion. Though such ad hoc selection tools are common in hierarchical clustering, perhaps a more formal statistic could be of value. For the shrinkage estimation of $\S$, investigating adaptive weights $w$ that would diminish with $n$ and with each iteration of Algorithm \ref{algo:path} may also lead to slightly improved inference. Working with some, but not all, off-diagonal entries of $\Sh$ may also improve the performance of the procedure. Finally, the Partial Exchangeability Assumption could perhaps be relaxed to allow for more general hierarchical structures.

\section*{Acknowledgments}
We would like to thank the Acting Editor Richard A. Lockhart, the Associate Editor and two reviewers for their careful reading and insightful comments. Special thanks are due to the Editor-in-Chief, Christian Genest, for stimulating conversations, encouragements, and help with copy editing. This research was funded by individual operating grants from the Natural Sciences and Engineering Research Council of Canada to TD (RGPIN-2016-05883) and JGN (RGPIN-2015-06801), a team grant from the Fonds de recherche du Qu\'ebec -- Nature et technologies to TD and JGN (2015--PR--183236), a team grant from the Canadian Statistical Sciences Institute to JGN, and a graduate scholarship from the Fonds de recherche du Qu\'ebec -- Nature et technologies to SP.

\appendix

\section{Estimating $\S$} \label{app:sigma} 
\setcounter{lemma}{0}
\setcounter{definition}{0}
\setcounter{proposition}{0}
\setcounter{equation}{0}
\setcounter{algorithm}{0}
\setcounter{example}{0}
\setcounter{remark}{0}
\setcounter{corollary}{0}
\setcounter{figure}{0}
\setcounter{table}{0}
\renewcommand{\thelemma}{\Alph{section}\arabic{lemma}}
\renewcommand{\theexample}{\Alph{section}\arabic{example}}
\renewcommand{\theremark}{\Alph{section}\arabic{remark}}
\renewcommand{\thedefinition}{\Alph{section}\arabic{definition}}
\renewcommand{\theproposition}{\Alph{section}\arabic{proposition}}
\renewcommand{\thecorollary}{\Alph{section}\arabic{corollary}}

\citet{Genest/Neslehova/BenGhorbal:2011} traced back explicit formulas for the diagonal elements of $\S$ to \citet{Lindeberg:1927,Lindeberg:1929}. Extending the results of \citet{Ehrenberg:1952}, they then provided a formula for the off-diagonal elements of $\S$. Using this formula, given in Eq.~\eqref{eq:sigma} below, we define a plug-in estimator $\Sh$ of $\S$. The estimator and its computation are presented in \ref{subapp:sigma-hat}. In most cases, it is not advisable to use $\Sh$ directly due to a high amount of noise in the estimation. More so because it needs to be inverted, which is known to amplify the estimation error when the original matrix is ill-conditioned. Fortunately, if $\mathcal{G}$ satisfies the \ref{ass:main}, $\S$ has a block structure as well. The latter is described and explained in \ref{subapp:sigma-structure}. We then use this block structure to improve the estimation of $\S$ by averaging entries of $\Sh$ block-wise. The resulting estimate may still contain too much noise to be useful. Inspired by the work of \citet{Ledoit/Wolf:2004}, we thus apply, in addition to the averaging just mentioned, a simple Stein-type shrinkage procedure which depends on the so-called shrinkage intensity $w$. The two shrinkage procedures are presented in \ref{subapp:sigma-tilde}.

\subsection{Plug-in estimator of $\S$} \label{subapp:sigma-hat}
Let $\X$ be a random vector with continuous univariate marginals $F_1,\dots, F_d$ and unique copula $C$, as in Section  \ref{sec:model}. Let $\U = (F_1(X_1),\dots, F_d(X_d))$ and recall that $\U$ has distribution function $C$. For any subset $\{i_1,\dots, i_k \} \subseteq \{1,\dots, d\}$ of indices, let $C_{i_1 \dotsm i_k}$ denote the unique copula  of the marginal $(X_{i_1},\dots, X_{i_k})$ of $\X$. As shown in \citet{Genest/Neslehova/BenGhorbal:2011}, for any $i_1 \neq j_1 \in \{1,\dots, d\}$ and $i_2 \neq j_2 \in \{1,\dots, d\}$,
\begin{align}
 \label{eq:sigma}
{\rm cov}(\Th_{i_1 j_1},\Th_{i_2j_2}) &= \left\{4/n(n-1)\right\}^2 \bigl\{\ n(n-1)(n-2) ( \theta_1 + \theta_2  + \theta_3 + \theta_4)\\\notag
& \quad +\ n(n-1)(\vartheta_1 + \vartheta_2 )\bigr\} -\ \{2(2n-3)/n(n-1)\} (\T_{i_1 j_1}+1)(\T_{i_2j_2}+1),
\end{align}
where
\begin{gather}\label{eq:thetas}
\theta_1 = {\rm E}\{C_{i_1 j_1}(U_{i_1}, U_{j_1}) C_{i_2j_2}(U_{i_2}, U_{j_2})\}, \quad \theta_2 =  {\rm E}\{\bar{C}_{i_1 j_1}(U_{i_1}, U_{j_1}) C_{i_2j_2}(U_{i_2}, U_{j_2})\}, \\\notag
\theta_3 = {\rm E}\{C_{i_1 j_1}(U_{i_1}, U_{j_1})\bar{C}_{i_2j_2}(U_{i_2}, U_{j_2})\}, \quad \theta_4 = {\rm E}\{\bar{C}_{i_1 j_1}(U_{i_1}, U_{j_1})\bar{C}_{i_2j_2}(U_{i_2}, U_{j_2})\},\\
\vartheta_1 = {\rm E}\{C_{i_1 j_1i_2j_2}(U_{i_1},U_{j_1}, U_{i_2}, U_{j_2})\}, \quad \vartheta_2 = {\rm E}\{\tilde{C}_{i_1 j_1i_2j_2}(U_{i_1},U_{j_1}, U_{i_2}, U_{j_2})\},\notag
\end{gather}
and $\bar{C}$ denotes the survival function corresponding to $C$, while $\tilde{C}_{i_1 j_1i_2j_2} = C_{i_1 j_1} - C_{i_1 j_1j_2} - C_{i_1 j_1i_2} + C_{i_1 j_1i_2j_2}$.

For arbitrary $r, s \in \{1,\dots, p\}$, let $\S_{rs} = {\rm cov}(\Th_{i_r j_r},\Th_{i_sj_s})$. From Eqs.~\eqref{eq:sigma}--\eqref{eq:thetas} and the fact that for all $i\neq j \in \{1,\dots, d\}$, ${\rm E}\{C_{ij}(U_i,U_j)\} = {\rm E}\{\bar C_{ij}(U_i,U_j)\}$, we have, as $n\to \infty$, $n \S \to \S_\infty$, where for any $r, s\in\{1,\dots, p\}$, the $(r,s)$th entry of $\S_{\infty}$ is given by
\begin{align}
 \label{eq:sigmainf}
(\S_{\infty})_{rs} & = 16(\theta_1 + \theta_2 + \theta_3 + \theta_4) -4 (\T_{i_r j_r} + 1)(\T_{i_s j_s} + 1) \\\notag
&= 16{\rm cov}\bigl\{C_{i_r, j_r} (U_{i_r}, U_{j_r}) +\bar C_{i_r, j_r} (U_{i_r}, U_{j_r}), C_{i_s, j_s} (U_{i_s}, U_{j_s })+\bar C_{i_s, j_s} (U_{i_s}, U_{j_s }) \bigr\}.
\end{align}

For any $i_1 \neq j_1 \in \{1,\dots, d\}$ and $i_2 \neq j_2 \in \{1,\dots, d\}$, a plug-in estimator of ${\rm cov}(\Th_{i_1 j_1},\Th_{i_2j_2}) $ can be defined by first replacing $\T_{i_1 j_1}$ and $\T_{i_2 j_2}$ by $\Th_{i_1 j_1}$ and $\Th_{i_2 j_2}$, respectively. Furthermore, the quantities in  Eq.~\eqref{eq:thetas} can be estimated as follows. For $k \in \{ 1, 2 \}$, let $\bss{I}^{(k)}$ be an $n \times n$ matrix with entries
\begin{align}\label{eq:matrixI}
	\bss{I}_{rs}^{(k)} = \boldsymbol{1}(X_{r i_k} < X_{s i_k}, X_{r j_k } < X_{s j_k}).
\end{align}
Similarly to the plug-in estimators considered in \cite{BenGhorbal/Genest/Neslehova:2009}, an unbiased estimator of $\theta_1$ is then given by
\begin{align*}
	\hat{\theta}_1 &= \frac{1}{n(n-1)(n-2)} \sum_{r \neq s \neq t} \boldsymbol{1}(X_{r i_1} < X_{s i_1}, X_{r j_1} < X_{s j_1}) \boldsymbol{1}(X_{t i_2} < X_{s i_2}, X_{t j_2 } < X_{s j_2}) = \frac{1}{n(n-1)(n-2)} \sum_{r \neq s \neq t} \bss{I}^{(1)}_{rs} \bss{I}^{(2)}_{ts}.
\end{align*}

Similar formulas can be derived for the other parameters, viz.
\begin{gather*}
\hat{\theta}_2 = \frac{1}{n(n-1)(n-2)} \sum_{r \neq s \neq t} \bss{I}_{rs}^{(1)} \bss{I}_{tr}^{(2)},	\quad \hat{\theta}_3 =  \frac{1}{n(n-1)(n-2)}\sum_{r \neq s \neq t} \bss{I}_{rs}^{(1)} \bss{I}_{st}^{(2)}, \quad \hat{\theta}_4 =  \frac{1}{n(n-1)(n-2)} \sum_{r \neq s \neq t} \bss{I}_{rs}^{(1)} \bss{I}_{rt}^{(2)}, \\
	\hat{\vartheta_1} = \frac{1}{n(n-1)}\sum_{r \neq s} \bss{I}_{rs}^{(1)} \bss{I}_{rs}^{(2)}, \quad	\hat{\vartheta_2} = \frac{1}{n(n-1)} \sum_{r \neq s} \bss{I}_{rs}^{(1)} \bss{I}_{sr}^{(2)}.
\end{gather*}

Given that $\hat\theta_1,\dots, \hat \theta_4$ and $\hat \vartheta_1, \hat \vartheta_2$ are $U$-statistics with square integrable kernels, they are consistent and asymptotically Normal. These properties carry over to the resulting plug-in estimator $\Sh$ of $\S$; in particular, as $n \to \infty$,
\begin{align} 
\label{eq:consSigma}
n \Sh \to \S_{\infty}
\end{align}
in probability. Similarly, one can define a consistent plug-in estimator of $\S_\infty$ by replacing $\theta_1,\dots, \theta_4$ and $\T_{i_1j_1}$ and $\T_{i_2 j_2}$ by their estimators in Eq.~\eqref{eq:sigmainf}.

Finally, note that $\Sh$ can be computed efficiently using matrix products. To this end, consider again arbitrary $i_1 \neq j_1 \in \{1,\dots, d\}$ and $i_2 \neq j_2 \in \{1,\dots, d\}$, and for $k \in \{ 1,2\}$, define $\bss{I}^{(1)}$ through Eq.~\eqref{eq:matrixI} and set $\bss{J}^{(1)} = (\bss{I}^{(1)})^\top$. Furthermore, let $\mathbf{1}$ be the $n$-dimensional vector of ones and $\circ$ denote the Hadamard product. Then because the diagonal entries of $\bss{I}^{(k)}$, $k \in \{ 1, 2\}$ are zero,
\begin{align*}
	n(n-1)(n-2) \sum_{\ell=1}^4 \hat{\theta}_\ell
		&= \sum_{r \neq s \neq t}  ( \bss{I}_{rs}^{(1)} \bss{J}_{st}^{(2)} + \bss{J}_{rs}^{(1)} \bss{J}_{st}^{(2)} + \bss{I}_{rs}^{(1)} \bss{I}_{st}^{(2)} + \bss{J}_{rs}^{(1)} \bss{I}_{st}^{(2)} )= \sum_{r \neq t}  [ (\bss{I}^{(1)} + \bss{J}^{(1)} ) (\bss{I}^{(2)} + \bss{J}^{(2)} ) ]_{rt}\\
	&= \mathbf{1}^\top  (\bss{I}^{(1)} + \bss{J}^{(1)} )  (\bss{I}^{(2)} + \bss{J}^{(2)} ) \mathbf{1} - \mathbf{1}^\top  \{ (\bss{I}^{(1)} + \bss{J}^{(1)} ) \circ  (\bss{I}^{(2)} + \bss{J}^{(2)} ) \} \mathbf{1},\\
n(n-1)(	\hat{\vartheta_1} + \hat{\vartheta_2}) 
	& = \sum_{r \neq s} \bss{J}_{rs}^{(1)}  [ \bss{I}^{(2)} + \bss{J}^{(2)}  ]_{sr}
	 = \mathbf{1}^\top \{ \bss{J}^{(1)} \circ  ( \bss{I}^{(2)} + \bss{J}^{(2)}  )  \} \mathbf{1},
\end{align*}
so that
\begin{align*}
n(n-1)(n-2) ( \hat{\theta}_1 + \cdots + \hat{\theta}_4 ) + n(n-1)(\hat{\vartheta_1} + \hat{\vartheta_2}) = \mathbf{1}^\top (\bss{I}^{(1)} + \bss{J}^{(1)} )  (\bss{I}^{(2)} + \bss{J}^{(2)} )\mathbf{1} - \mathbf{1}^\top  \{ \bss{J}^{(1)} \circ  ( \bss{I}^{(2)} + \bss{J}^{(2)}  )  \} \mathbf{1}.
\end{align*}

\subsection{{Structure of} $\S$ and $\S^{-1}$ implied by $\boldsymbol{\mathcal{G}}$}  \label{subapp:sigma-structure}
Suppose that the Partial Exchangeability Assumption (\ref{ass:main}) holds for some partition $\mathcal{G}$. In this section, we describe the block structure of $\S$ and $\S^{-1}$ induced by the \ref{ass:main}. In \ref{subapp:sigma-tilde} we then exploit this structure to derive the improved estimator $\St$ of $\S$. To this end, let us first focus on a single entry $\S_{rs}$ for some arbitrary fixed $r,s \in \{1,\dots, p\}$. Recall that $\mathcal{G}$ induces the partition $\mathcal{B}_{\mathcal{G}}= \{\mathcal{B}_1,\dots, \mathcal{B}_L\}$ of $\{1,\dots, p\}$, as given in Eq.~\eqref{eq:B-cal}. Eq.~\eqref{eq:sigma} suggests that the value of $\S_{rs}$ depends on the blocks in $\mathcal{B}_{\mathcal{G}}$ to which {$r$ and $s$} belong. To identify these blocks, let
$$
\bs{\Phi}_1 = \{(\ell_1,\ell_2) : 1 \leq \ell_1 \leq \ell_2 \leq L \}
$$
be the set of all ordered pairs of block indices and define the function
\begin{align*} 
\phi : \{1,\dots,p\}^2  \rightarrow \bs{\Phi}_1: 
(r,s)  \mapsto (\ell_1 \wedge \ell_2,\ell_1 \vee \ell_2) \text{ such that } (r,s) \in  \mathcal{B}_{\ell_1} \times \mathcal{B}_{\ell_2} ,
\end{align*}
where for any $a,b\in \mathbb{R}$, $a \wedge b = \min(a,b)$ and $a \vee b = \max(a,b)$.
Now recall from Section~\ref{sec:model} that $(i_r,j_r)$ is a pair of indices such that $\t_r = \T_{i_rj_r}$ and similarly for $(i_s,j_s)$. The value of $\S_{rs}$ does not depend only on $\phi(r,s)$, but also on the overlap between $(i_r,j_r)$ and $(i_s,j_s)$. To account for the latter, let
$
 \bs{\Phi}_2 = \{(k_1,k_2) : 0 \leq k_1 \leq k_2 \leq K \}
$ 
 and define the function $\varphi : \{1,\dots,p\}^2 \rightarrow \bs{\Phi}_2$ given by
\begin{align*} 
	\varphi(r,s) = \begin{cases} (0,0) & \text{ if } \{i_r,j_r\}\cap\{i_s,j_s\} = \emptyset, \\
	(0,k) & \text{ if } \{i_r,j_r\}\cap\{i_s,j_s\} = \{i\}, \; i \in \mathcal{G}_k,\\
		(k_1 \wedge k_2, k_1 \vee k_2) & \text{ if } \{i_r,j_r\}\cap\{i_s,j_s\} = \{i,j\}, \;(i,j) \in \mathcal{G}_{k_1} \times \mathcal{G}_{k_2}.
		\end{cases}
\end{align*}

Using this notation, we introduce, for any $\bs{\ell} = (\ell_1,\ell_2) \in \bs{\Phi}_1$ and $\bs{k} = (k_1,k_2) \in \bs{\Phi}_2$,
\begin{align*}
	\mathcal{C}_{\bs{\ell} \bs{k}} = \{ (r,s) \in \{1,\dots, p\}^2: \; r \le s,\, \phi(r,s) = \bs{\ell}, \,\ \varphi(r,s) = \bs{k} \},
\end{align*}
and set $\mathcal{C}_{\mathcal{G}} = \{ \mathcal{C}_{\bs{\ell} \bs{k}} : (\bs{\ell},\bs{k}) \in \bs{\Phi}_1 \times \bs{\Phi}_2\}$.  Similarly to $\mathcal{T}_\mathcal{G}$, we now define the set $\mathcal{S}_{\mathcal{G}}$ of matrices with a block structure given by $\mathcal{C}_{\mathcal{G}}$, i.e.,
\begin{align*}
	\mathcal{S}_{\mathcal{G}} = \{\SS \in \mathbb{R}^{p \times p}: \SS \text{ symmetric and } \forall_{(\bs{\ell},\bs{k}) \in\bs{\Phi}_1 \times \bs{\Phi}_2}\ (r_1,s_1),(r_2,s_2) \in \mathcal{C}_{\bs{\ell}\bs{k}}\; \Rightarrow \; \SS_{r_1s_1} = \SS_{r_2s_2} \}.
\end{align*}
Finally, for each $r \in \{1,\dots, p\}$, $\ell \in \{1,\dots, L\}$ and $\bs{k} \in \bs{\Phi}_2$, we shall also need the set
\begin{align}
\label{eq:slice}
	\mathcal{C}_{\ell \bs{k}}^{(r)} = \{ s\in \mathcal{B}_\ell : \varphi(r,s)   = \bs{k}\}.
\end{align}

The next proposition confirms that $\S$ and $\S_\infty$ have the block structure induced by $\mathcal{C}_{\mathcal{G}}$.
\begin{proposition} 
\label{prop:equal2}
Suppose that $\mathcal{G}$ is such that the \ref{ass:main} holds. Then $\S \in \mathcal{S}_{\mathcal{G}}$ and $\S_{\infty} \in  \mathcal{S}_{\mathcal{G}}$.
\end{proposition}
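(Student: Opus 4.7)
The plan is to derive the result directly from the formulas in Eq.~\eqref{eq:sigma} and Eq.~\eqref{eq:sigmainf}, exploiting that under the \ref{ass:main} the joint distribution of $(U_{i_r}, U_{j_r}, U_{i_s}, U_{j_s})$ depends only on the cluster memberships of these four indices and on their overlap pattern. Concretely, I fix arbitrary $(r_1,s_1),(r_2,s_2)\in\mathcal{C}_{\bs{\ell}\bs{k}}$ with $\bs{\ell}=(\ell_1,\ell_2)\in\bs{\Phi}_1$ and $\bs{k}=(k_1,k_2)\in\bs{\Phi}_2$, and I aim to show $\S_{r_1s_1}=\S_{r_2s_2}$ and $(\S_\infty)_{r_1s_1}=(\S_\infty)_{r_2s_2}$.

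First I would unpack the information provided by $\phi$ and $\varphi$. From $\phi(r_a,s_a)=(\ell_1,\ell_2)$ ($a=1,2$) one has that the pair $\{i_{r_a},j_{r_a}\}$ always straddles the same two clusters (those associated with $\mathcal{B}_{\ell_1}$) and similarly $\{i_{s_a},j_{s_a}\}$ straddles the two clusters of $\mathcal{B}_{\ell_2}$. In particular, Proposition~\ref{prop:T-equalities} gives $\T_{i_{r_1}j_{r_1}}=\T_{i_{r_2}j_{r_2}}$ and $\T_{i_{s_1}j_{s_1}}=\T_{i_{s_2}j_{s_2}}$, so the affine term $(\T_{i_rj_r}+1)(\T_{i_sj_s}+1)$ appearing in Eqs.~\eqref{eq:sigma} and~\eqref{eq:sigmainf} takes the same value at both pairs. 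Second, the equality $\varphi(r_1,s_1)=\varphi(r_2,s_2)=(k_1,k_2)$ forces $\{i_{r_1},j_{r_1}\}\cap\{i_{s_1},j_{s_1}\}$ and $\{i_{r_2},j_{r_2}\}\cap\{i_{s_2},j_{s_2}\}$ to have the same cardinality and to lie in the same cluster(s).

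Next I would construct a permutation $\pi$ of $\{1,\dots,d\}$ that (i) maps the indices involved in the first quadruple to those of the second while respecting any coincidences encoded by $\varphi$, and (ii) is the identity on all other coordinates. Since the matching indices of the two quadruples share the same cluster by the previous paragraph, we can choose $\pi$ so that for each $k\in\{1,\dots,K\}$, $j\in\mathcal{G}_k$ if and only if $\pi(j)\in\mathcal{G}_k$. The \ref{ass:main} then yields $(U_{i_{r_1}},U_{j_{r_1}},U_{i_{s_1}},U_{j_{s_1}})\stackrel{d}{=}(U_{i_{r_2}},U_{j_{r_2}},U_{i_{s_2}},U_{j_{s_2}})$, so the four-dimensional copula $C_{i_rj_ri_sj_s}$, its survival version, and the bivariate marginals $C_{i_rj_r},C_{i_sj_s}$ (and their survival versions) coincide for both pairs, modulo the overlap identifications dictated by $\varphi$. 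Substituting into the definitions~\eqref{eq:thetas} shows that each of $\theta_1,\dots,\theta_4,\vartheta_1,\vartheta_2$ takes the same value at $(r_1,s_1)$ and $(r_2,s_2)$. Plugging back into Eq.~\eqref{eq:sigma} gives $\S_{r_1s_1}=\S_{r_2s_2}$, and into Eq.~\eqref{eq:sigmainf} gives $(\S_\infty)_{r_1s_1}=(\S_\infty)_{r_2s_2}$.

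The main technical nuisance is the bookkeeping for the existence of $\pi$ when there is partial overlap. When $|\{i_{r_1},j_{r_1}\}\cap\{i_{s_1},j_{s_1}\}|=1$, I must match the shared index across the two quadruples to a single target index that lies in the correct cluster $\mathcal{G}_{k_2}$ (dictated by $\bs{k}=(0,k_2)$), and the two remaining indices of each quadruple must be matched within their respective clusters without collisions. When $|\{i_{r_1},j_{r_1}\}\cap\{i_{s_1},j_{s_1}\}|=2$ the construction is analogous but tighter, and the case $r=s$ (diagonal of $\S$) corresponds to $\bs{k}=(k_1,k_2)$ with $\{i_r,j_r\}\subseteq\mathcal{G}_{k_1}\cup\mathcal{G}_{k_2}$. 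In each case the clusters are large enough (because the relevant indices already populate them) for such a $\pi$ to exist, so the argument goes through, completing the proof.
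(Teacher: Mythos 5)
Your proposal is correct and follows essentially the same route as the paper's own proof: fix two pairs in the same class $\mathcal{C}_{\bs{\ell}\bs{k}}$, use the \ref{ass:main} (via a cluster-preserving permutation matching the two quadruples of indices, consistently with the overlap pattern recorded by $\varphi$) to identify the relevant bivariate and four-variate copulas, and conclude that $\theta_1,\dots,\theta_4,\vartheta_1,\vartheta_2$ and the $(\T_{i_rj_r}+1)(\T_{i_sj_s}+1)$ term in Eqs.~\eqref{eq:sigma} and~\eqref{eq:sigmainf} are unchanged. One cosmetic slip: such a permutation cannot in general be the identity on all remaining coordinates (that would violate bijectivity); it suffices, as in the paper, to take any extension that maps each cluster onto itself, which always exists.
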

\begin{proof}
Fix arbitrary $(\bs{\ell},\bs{k}) \in\bs{\Phi}_1 \times \bs{\Phi}_2$ and $(r_1,s_1),(r_2,s_2) \in \mathcal{C}_{\bs{\ell}\bs{k}}$. To ease the notation, write $(i_1,j_1),(i_2,j_2),(i_3,j_3)$ and $(i_4,j_4)$ instead of $(i_{r_1},j_{r_1}), (i_{s_1},j_{s_1}),(i_{r_2},j_{r_2})$ and $(i_{s_2},j_{s_2})$, respectively. To prove the claim, it suffices to show that all expectations in Eq.~\eqref{eq:sigma} are identical
{when $(i_1,j_1)$ is changed to $(i_{3}, i_{3})$ and $(i_2,j_2)$ to $(i_4,j_4)$, respectively}.  Focusing on $\theta_1$, we need to show
\begin{align} 
\label{eq:expectations}
{\rm E}\{C_{i_1j_1}(U_{i_1},U_{j_1}) C_{i_2j_2}(U_{i_2},U_{j_2})\} = {\rm E}\{C_{i_3j_3}(U_{i_3},U_{j_3})\ C_{i_4j_4}(U_{i_4},U_{j_4})\}.
\end{align}

Let $\bss{I}$ be the set of unique indices from $(i_1,j_1,i_2,j_2)$ and $\bss{I}^{m}=(i_m,j_m)$ for $m \in \{ 1,2\}$. The left-hand side of Eq.~\eqref{eq:expectations} can then be rewritten as
\begin{align}
{\rm E}\{C_{i_1j_1}(U_{i_1},U_{j_1}) C_{i_2j_2}(U_{i_2},U_{j_2})\} &= \int C_{i_1j_1}(u_{i_1},u_{j_1}) C_{i_2j_2}(u_{i_2},u_{j_2})\  \d C_{i_1j_1i_2j_2} = \int C_{\bss{I}^1}(\bs{u}_{\bss{I}^1}) C_{\bss{I}^2}(\bs{u}_{\bss{I}^2})\  \d C_{\bss{I}}. \label{eq:exp-integral}
\end{align}

Now define $\bss{J}$ to be the set of distinct indices from $(i_3,j_3,i_4,j_4)$ and $\bss{J}^{m}=(i_{m+2},j_{m+2})$ for $m \in \{ 1, 2 \}$. Combining the facts that $\phi(r_1,s_1) = \phi(r_2,s_2)$ and $\varphi(r_1,s_1) = \varphi(r_2,s_2)$, we deduce that for any $k\in \{1,\dots, K\}$, $\bss{I}$ and $\bss{J}$ have the same number of entries coming from $\mathcal{G}_k$, with no repetition. The same can be deduced for $\bss{I}^m$ and $\bss{J}^m$ with $m \in \{ 1, 2 \}$. We can therefore use the \ref{ass:main} to replace {$C_{\bss{I}}$ by $C_{\bss{J}}$ and $C_{\bss{I}^{m}}$ by $C_{\bss{J}^{m}}$} for $m \in \{ 1, 2 \}$. Consequently,
\begin{align*}
\int C_{\bss{I}^1}(\bs{u}_{\bss{I}^1}) C_{\bss{I}^2}(\bs{u}_{\bss{I}^2})\  \d C_{\bss{I}} &= \int C_{\bss{J}^1}(\bs{u}_{\bss{J}^1}) C_{\bss{J}^2}(\bs{u}_{\bss{J}^2})\  \d C_{\bss{J}},
\end{align*}
thus showing that the right-hand side of Eq.~\eqref{eq:exp-integral} is indeed equal to the right-hand side of Eq.~\eqref{eq:expectations}.
Equalities for the other quantities $\theta_2,\dots, \theta_4$ and $\vartheta_1,\vartheta_2$ can be shown using the same technique.
\end{proof}
\begin{example} \label{ex:A.1} \em
For $\mathcal{G}$ as given in Eq.~\eqref{eq:ex-G} in Example \ref{ex:1}, there are $L = 6$ sets in $\mathcal{B}_{\mathcal{G}}$, viz.
\begin{align*}
\mathcal{B}_{\mathcal{G}} = \{ \mathcal{B}_{11}, \mathcal{B}_{12}, \mathcal{B}_{13}, \mathcal{B}_{22}, \mathcal{B}_{23}, \mathcal{B}_{33} \} \equiv \{\mathcal{B}_1,\dots, \mathcal{B}_6\},
\end{align*}
{where for $k_1,k_2 \in \{1,\dots, 3\}$, $\mathcal{B}_{k_1k_2}$ is as in Eq.~\eqref{eq:Bk}. The blocks in $\mathcal{B}_{\mathcal{G}}$} are displayed in the 
 left panel of Figure~\ref{fig:ex-T-sigma-structure}. For each $k_1,k_2 \in \{1,2, 3\}$, the cells $(i_r,j_r)$ and $(j_r,i_r)$ for $r \in \mathcal{B}_{k_1k_2}$ are colored the same, emphasizing that the entries are equal.

 \begin{figure}[t!]
	\centering
	\begin{minipage}{.35\textwidth}
	    \centering
    	    \includegraphics[width=1\linewidth]{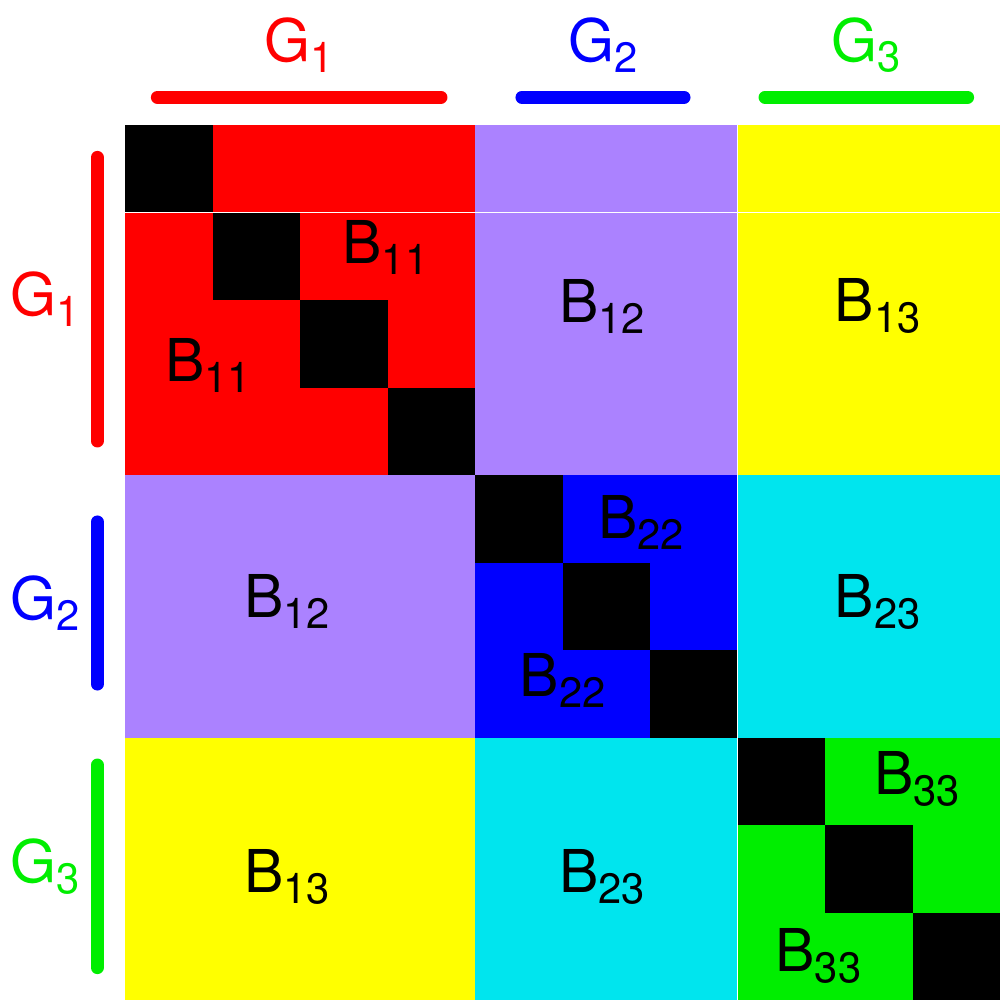}	\\
    	    \hspace{.18\textwidth} $\T$
	\end{minipage}
	\begin{minipage}{.35\textwidth}
	    \centering
    	    \includegraphics[width=1\linewidth]{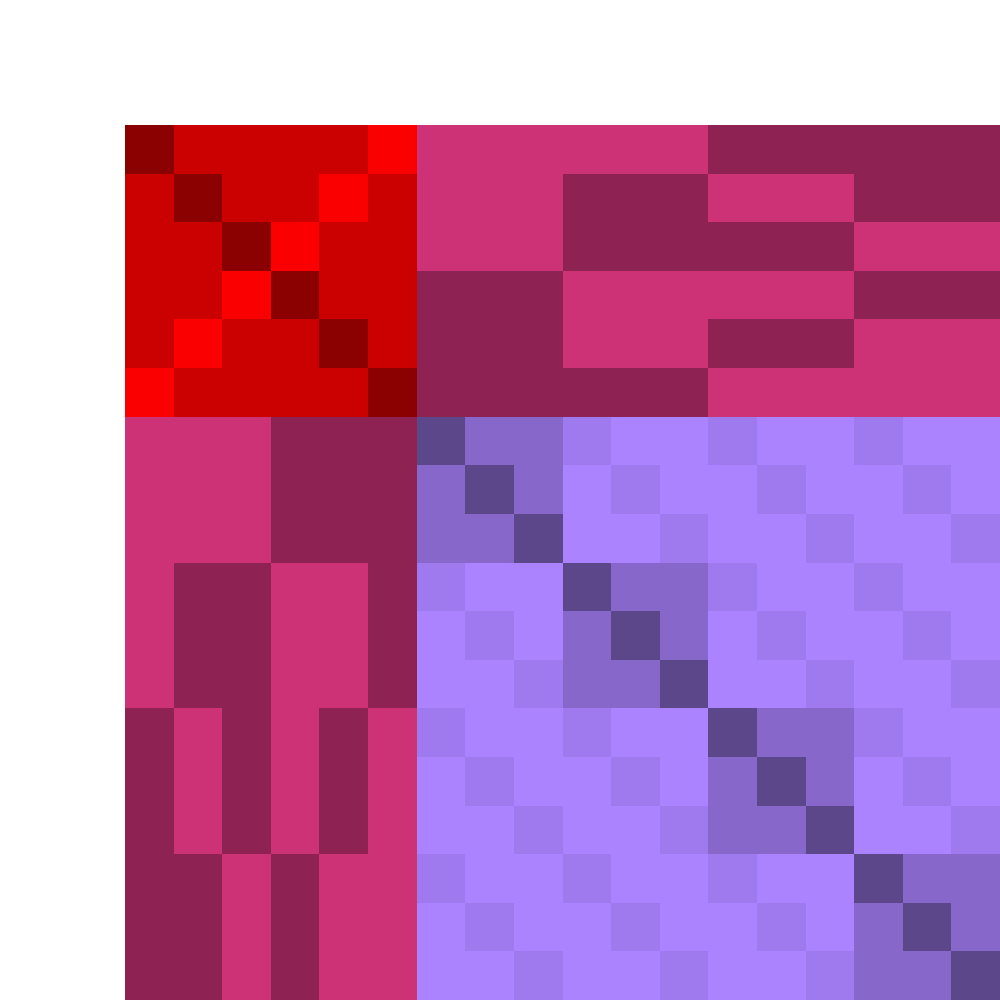}	\\
    	    \hspace{.05\textwidth} Sub-matrix of $\S$
	\end{minipage}
	\caption{The matrix $\T$ (left) and a sub-matrix of $\S$ (right) from Example \ref{ex:A.1}. The cells are tinted so that, in each matrix, all entries sharing the same value are of the same color and color intensity.} \label{fig:ex-T-sigma-structure}
\end{figure}

Given that $p=d(d+1)/2=9(10)/2=45$, the $45\times45$ matrix $\S$ is more cumbersome to visualize. To see its structure more clearly, we vectorize $\T$ not as in Eq.~\eqref{eq:vectorization}, but rather block by block. For instance the first $18$ entries of $\t$ are the six {entries in $\T$ corresponding to} $\mathcal{B}_{11}{\equiv\mathcal{B}_1 = \{1,\dots, 6\}}$ followed by the 12 {entries in $\T$ corresponding to} $\mathcal{B}_{12}{\equiv\mathcal{B}_2 =\{ 7, \dots,18 \}}$, i.e., $(\t_1,\ldots,\t_{18}) = (\T_{1,2},\T_{1,3},\T_{1,4}, \T_{2,3}, \T_{2,4}, \T_{3,4}, \T_{1,5},\dots, \T_{4,7})$. 

The $18\times 18$ dimensional sub-matrix of $\S$ displaying the pair-wise covariances of $\th_{1},\dots, \th_{18}$ is showed in the right panel of Figure~\ref{fig:ex-T-sigma-structure}. Distinct values are depicted using different colors and color intensity. The colors represent distinct values of $\phi$: for all $r,s\in\{1,\dots, 18\}$, the cell $(r,s)$ appears in red, magenta and violet if $\phi(r,s)$ equals $(1,1)$, $(1,2)$ and $(2,2)$, respectively. In other words, the color blocks are induced by $\mathcal{B}_{11}$ and $\mathcal{B}_{12}$. 

Next, notice that in each colored block, the values of $\S$ can differ, and this is depicted through different color intensity. This is because for any $r,s\in \{1,\dots, p\}$, the value of $\S_{rs}$ also depends on $\varphi(r,s)$. For example, the red block in the top left corner contains three distinct values, for if $r,s \in \{1,\dots,6\}$, $\{i_r,j_r\} \cap \{i_s,j_s\}$ is either empty (red), contains one element from $\mathcal{G}_1$ (light red), or contains two elements from $\mathcal{G}_1$ (dark red). To illustrate, take $r$ successively equal to $1, 2, 6$; then $(i_1,j_1) = (1,2)$, $(i_2,j_2) = (1,3)$, and $(i_{6},j_{6}) = (3,4)$, respectively. Consequently, $\phi(1,1) = \phi(1,2) = \phi(1,6)  = (1,1)$, and indeed the entries $\S_{11}$, $\S_{12}$ and $\S_{16}$ are red. However, $\varphi(1,1) = (1,1)$, $\varphi(1,2) = (0,1)$ and $\varphi(1,6) = (0,0)$, which is why $\S_{11}$, $\S_{12}$ and $\S_{16}$ are dark red, red and light red, respectively. One can indeed verify that $\S_{11}\neq\S_{12}\neq\S_{16}$. The block structure $\S$ is thus described by both $\phi$ and $\varphi$; the sets in $\mathcal{C}_{\mathcal{G}}$  correspond to the cells above the main diagonal of $\S$ with the same color and intensity.

Finally, the right panel in Figure~\ref{fig:ex-T-sigma-structure} can be used to visualize the sets $\mathcal{C}^{(r)}_{\ell \bs{k}}$ defined in Eq.~\eqref{eq:slice}. For a given $r \in \{1,\dots, p\}$, the union of the sets $\mathcal{C}^{(r)}_{\ell \bs{k}}$ for $\ell \in \{1,\dots, L\}$ and $\bs{k} \in \bs{\Phi}_2$ may be identified with the $r$th row (or equivalently the $r$th column) of $\S$: the index $\ell$ determines the color and $\bs{k}$ the intensity in that row (column). To illustrate in the context of this example, pick $r=1$ and $\ell=1$, say. Then
$\mathcal{C}^{(1)}_{1 (1,1)} = \{1\}$, $\mathcal{C}^{(1)}_{1 (0,1)} = \{2,3,4,5\}$,  and $\mathcal{C}^{(1)}_{1 (0,0)}=\{6\}$,
while $\mathcal{C}^{(1)}_{1 \bs{k}} =\emptyset$ for any other $\bs{k} \in \bs{\Phi}_2$. Note that  $\{\mathcal{C}^{(1)}_{1 (1,1)},\mathcal{C}^{(1)}_{1 (0,1)}, \mathcal{C}^{(1)}_{1 (0,0)}\}$ is a partition of $\mathcal{B}_{11}$.

\end{example}
The following proposition establishes that the structure of $\S^{-1}$ is the same as that of $\S$.
\begin{proposition} \label{prop:S-inv}
Suppose that $\mathcal{G}$ is a partition for which the \ref{ass:main} holds. An invertible matrix $\SS$ is an element of $ \mathcal{S}_{\mathcal{G}}$ if and only if $\SS^{-1} \in \mathcal{S}_{\mathcal{G}}$. That is, $\mathcal{S}_{\mathcal{G}}$ is closed under inversion.
\end{proposition}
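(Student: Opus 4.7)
The plan is to identify $\mathcal{S}_{\mathcal{G}}$ as the fixed-point space of a natural group action by permutation matrices, because the closure under inversion then follows in one line from the fact that conjugation by an orthogonal matrix commutes with inversion. Concretely, I would introduce the group $\Pi_{\mathcal{G}}$ of permutations $\pi$ of $\{1,\dots,d\}$ that fix every cluster setwise, i.e., $\pi(\mathcal{G}_k) = \mathcal{G}_k$ for all $k\in\{1,\dots,K\}$. Each such $\pi$ induces a permutation $\tilde\pi$ of $\{1,\dots,p\}$ via the rule $\{i_{\tilde\pi(r)},j_{\tilde\pi(r)}\} = \{\pi(i_r),\pi(j_r)\}$; let $P_\pi$ denote the associated $p\times p$ permutation matrix. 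Since $\pi$ preserves cluster labels, both the block-assignment function $\phi$ and the overlap function $\varphi$ are invariant under this induced action, from which the implication ``$\SS \in \mathcal{S}_{\mathcal{G}} \Rightarrow P_\pi^{\top} \SS P_\pi = \SS$ for every $\pi \in \Pi_{\mathcal{G}}$'' is immediate.

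The converse direction, which is the main combinatorial content, is to show that if $\SS$ is symmetric and fixed by every $P_\pi$, then $\SS \in \mathcal{S}_{\mathcal{G}}$. This reduces to the orbit claim that any two pairs $(r_1,s_1)$ and $(r_2,s_2)$ in the same block $\mathcal{C}_{\bs{\ell}\bs{k}}$ are related by some $\pi \in \Pi_{\mathcal{G}}$, possibly combined with the transposition $(r,s)\leftrightarrow(s,r)$ (which is absorbed by the symmetry of $\SS$). The shared values of $\phi$ and $\varphi$ pin down, for each cluster $\mathcal{G}_k$, the number of the four indices $i_{r_1},j_{r_1},i_{s_1},j_{s_1}$ lying in $\mathcal{G}_k$ together with which among them coincide, and the same profile holds for the quadruple coming from $(r_2,s_2)$. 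Cluster by cluster, I would then construct an explicit bijection between the distinct $\mathcal{G}_k$-indices of the first quadruple and those of the second consistent with the coincidences encoded by $\varphi$, extend it arbitrarily to a permutation of $\mathcal{G}_k$, and concatenate over $k$ to produce the required $\pi$.

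With the characterization $\SS \in \mathcal{S}_{\mathcal{G}}\iff P_\pi^{\top} \SS P_\pi = \SS$ for all $\pi\in\Pi_{\mathcal{G}}$ in hand, the proposition follows immediately. If $\SS \in \mathcal{S}_{\mathcal{G}}$ is invertible, then for every $\pi\in\Pi_{\mathcal{G}}$,
\[
P_\pi^{\top} \SS^{-1} P_\pi \;=\; (P_\pi^{\top} \SS P_\pi)^{-1} \;=\; \SS^{-1},
\]
using $P_\pi^{\top} = P_\pi^{-1}$. Since $\SS^{-1}$ is automatically symmetric, the characterization gives $\SS^{-1} \in \mathcal{S}_{\mathcal{G}}$. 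The reverse implication is identical upon swapping the roles of $\SS$ and $\SS^{-1}$.

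The main obstacle is the orbit identification in the second paragraph: the case analysis according to how $\{i_{r_1},j_{r_1}\}$ and $\{i_{s_1},j_{s_1}\}$ overlap (empty, a single index, or two indices in one or two clusters) must be handled with care to guarantee that a \emph{single} permutation of $\{1,\dots,d\}$ simultaneously matches all four index pairs. Everything else is a routine application of the principle that invariance under a group of orthogonal transformations is preserved by matrix inversion.
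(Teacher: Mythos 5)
Your argument is correct, but it reaches the conclusion by a genuinely different route than the paper. You characterize $\mathcal{S}_{\mathcal{G}}$ as the set of symmetric fixed points of the conjugation action $\SS \mapsto P_\pi^{\top}\SS P_\pi$ of the group of within-cluster permutations, and then closure under inversion is immediate from $P_\pi^{\top}=P_\pi^{-1}$; the paper instead invokes the Cayley--Hamilton theorem to write $\SS^{-1}$ as a polynomial in $\SS$, reduces the claim to closure of $\mathcal{S}_{\mathcal{G}}$ under symmetric products, and proves that closure by a counting argument whose key input is Lemma~\ref{lem:inter} (equality of the cardinalities $|\mathcal{C}_{\lambda\bs{\kappa}_1}^{(r)}\cap\mathcal{C}_{\lambda\bs{\kappa}_2}^{(s)}|$ across pairs in the same block). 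Interestingly, the combinatorial core is shared: the orbit-transitivity claim you isolate --- that two pairs in the same $\mathcal{C}_{\bs{\ell}\bs{k}}$ are connected by a cluster-preserving permutation, possibly composed with the swap $(r,s)\leftrightarrow(s,r)$ absorbed by symmetry --- is exactly what the paper establishes inside its proof of Lemma~\ref{lem:inter}, via the same case analysis on whether the quadruple $\{i_{r_1},j_{r_1},i_{s_1},j_{s_1}\}$ has four, three, or two distinct elements (including the subtle sub-cases where a shared index forces two cluster labels to coincide, which your ``handled with care'' remark must cover, and the crossed case $r_1\in\mathcal{B}_{\ell_1}$, $s_1\in\mathcal{B}_{\ell_2}$ but $r_2\in\mathcal{B}_{\ell_2}$, $s_2\in\mathcal{B}_{\ell_1}$). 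What your packaging buys is conceptual economy: no Cayley--Hamilton, no counting lemma, and a one-line inversion step that would equally deliver closure under any matrix function commuting with orthogonal conjugation; note also that the forward inclusion of your characterization (invariance of $\phi$ and $\varphi$ under the induced action) is what makes both directions of the equivalence available, and both are indeed needed for the final step. What the paper's route buys is that its intermediate facts are stated purely in terms of the sets $\mathcal{C}_{\ell\bs{k}}^{(r)}$, in the same language as Lemmas~\ref{lem:slice-cardinality} and~\ref{lem:ratio} that are reused elsewhere (e.g., in Lemmas~\ref{lem:maxlike} and~\ref{lem:gamma-sigma-rel}), so the counting machinery is amortized across several proofs. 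To turn your sketch into a complete proof you would only need to write out the cluster-by-cluster construction of $\pi$ with the same care as the paper's Cases I--III, verifying in each case that a single permutation respects both pairs simultaneously.
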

\begin{proof}
From the Cayley--Hamilton Theorem as stated, e.g., on p.~583 of~\citep{Harville:2008}, $\SS$ satisfies its characteristic equation
\begin{align*}
	\SS^p + \sum_{s = 1}^{p-1} c_s \SS^{s} + (-1)^p |\SS| \Id{p} = 0,
\end{align*}
for some known coefficients $c_s$, $s\in\{1,\ldots,p-1\}$. As a consequence, the inverse of a $p \times p$ matrix $\SS$ can be represented by a linear function of its $p - 1$ first powers, viz.
\begin{align*}
	\SS^{-1} = \frac{1}{|\SS|} \sum_{s = 1}^p c_s \SS^{s-1}.
\end{align*}

Therefore, it suffices to show that if $\SS , \bss{Q} \in \mathcal{S}_{\mathcal{G}}  $ and $\SS\bss{Q}$ is symmetric, $ \SS \bss{Q} \in \mathcal{S}_{\mathcal{G}}$.
To this end, fix an arbitrary $\bs{\ell}=(\ell_1,\ell_2) \in \bs{\Phi}_1$, $\bs{k}=(k_1,k_2) \in \bs{\Phi}_2$, and arbitrary pairs  $(r_1,s_1),(r_2,s_2) \in \mathcal{C}_{\bs{\ell}\bs{k}}$. To show that
$[\SS \bss{Q}]_{r_1s_1} = [\SS \bss{Q}]_{r_2 s_2}$, first note that because $\SS\bss{Q}$ is symmetric by assumption, it can be assumed, without loss of generality, that $r_1,r_2 \in \mathcal{B}_{\ell_1}$ and $s_1,s_2 \in \mathcal{B}_{\ell_2}$. For any $\bs{\ell}^* \in \bs{\Phi}_1$ and any $\bs{k}^* \in \bs{\Phi}_2$, let $\SS^{\bs{\ell}^*\bs{k}^*}$ and $\bss{Q}^{\bs{\ell}^*\bs{k}^*}$ denote the unique values such that $\SS_{rs} = \SS^{\bs{\ell}^*\bs{k}^*}$ and $\bss{Q}_{rs} = \bss{Q}^{\bs{\ell}^*\bs{k}^*}$ whenever  $(r,s) \in \mathcal{C}_{\bs{\ell}^*\bs{k}^*}$.
Because $\mathcal{B}_{\mathcal{G}}$ given by Eq.~\eqref{eq:B-cal} is a partition of $\{1,\dots, p\}$, we can write
\begin{equation}
\label{eq:A10}
[\SS \bss{Q}]_{r_1s_1} = \sum_{t=1}^p \SS_{r_1 t} \bss{Q}_{t s_1} = \sum_{\ell=1}^L \sum_{t \in \mathcal{B}_{\ell}} \SS_{r_1 t} \bss{Q}_{t s_1}.
\end{equation}

For any fixed $\ell \in \{1,\dots, d\}$ and $t \in \mathcal{B}_\ell$, $\phi(r_1,t) = (\ell \wedge \ell_1, \ell \vee \ell_1) \equiv \bs{\ell}_{1\ell}$ and $\phi(s_1,t) = (\ell \wedge \ell_2, \ell \vee \ell_2)\equiv \bs{\ell}_{2\ell}$. However, $\SS_{r_1t}$ and $\bss{Q}_{t s_1}$ also depend on $\varphi(r_1,t)$ and $\varphi(s_1,t)$, and this requires further partitioning of $\mathcal{B}_{\ell}$ by means of the sets defined in Eq.~\eqref{eq:slice}. Specifically,
\[
	\mathcal{B}_\ell = \bigcup_{\bs{k_1}, \bs{k_2} \in \bs{\Phi}_2} \mathcal{C}_{\ell\bs{k}_1}^{(r_1)} \cap \mathcal{C}_{\ell\bs{k}_2}^{(s_1)}.
\]
Clearly, the sets $\mathcal{C}_{\ell\bs{k}_1}^{(r_1)} \cap \mathcal{C}_{\ell\bs{k}_2}^{(s_1)}$ are disjoint for distinct $\bs{k}_1, \bs{k}_2 \in \bs{\Phi}_2$, and for any given $\bs{k}_1, \bs{k}_2 \in \bs{\Phi}_2$ and $t \in \mathcal{C}_{\ell\bs{k}_1}^{(r_1)} \cap \mathcal{C}_{\ell\bs{k}_2}^{(s_1)}$, $\varphi(r_1,t) = \bs{k}_1$ and $\varphi(t,s_1) = \bs{k}_2$ so that $\SS_{r_1t} = \SS^{\bs{\ell}_{1\ell} \bs{k}_1}$ and $\bss{Q}_{ts_1} = \bss{Q}^{{\bs{\ell}_{2\ell} \bs{k}_2}}$. Consequently, the last expression in Eq.~\eqref{eq:A10} can be rewritten as
\begin{equation}\label{eq:A11}
\sum_{\ell=1}^{L}\sum_{\bs{k}_1,\bs{k}_2 \in \bs{\Phi}_2} \sum_{t \in \mathcal{C}_{\ell\bs{k}_1}^{(r_1)} \cap \mathcal{C}_{\ell\bs{k}_2}^{(s_1)}} \SS^{\bs{\ell}_{1\ell} \bs{k}_1}\bss{Q}^{{\bs{\ell}_{2\ell} \bs{k}_2}} = \sum_{\ell=1}^{L}\sum_{\bs{k}_1,\bs{k}_2 \in \bs{\Phi}_2} \left|\mathcal{C}_{\ell\bs{k}_1}^{(r_1)} \cap \mathcal{C}_{\ell\bs{k}_2}^{(s_1)}\right | \SS^{\bs{\ell}_{1\ell} \bs{k}_1}\bss{Q}^{{\bs{\ell}_{2\ell} \bs{k}_2}}.
\end{equation}
Now for any $\ell \in \{1,\dots, L\}$ and any $\bs{k}_1,\bs{k}_2 \in \bs{\Phi}_2$, Lemma~\ref{lem:inter} gives that $|\mathcal{C}_{\ell\bs{k}_1}^{(r_1)} \cap \mathcal{C}_{\ell\bs{k}_2}^{(s_1)} | = |\mathcal{C}_{\ell\bs{k}_1}^{(r_2)} \cap \mathcal{C}_{\ell\bs{k}_2}^{(s_2)}|$. Furthermore, for any $t \in \mathcal{C}_{\ell\bs{k}_1}^{(r_2)} \cap \mathcal{C}_{\ell\bs{k}_2}^{(s_1)}$, $\phi(r_2, t) = \bs{\ell}_{1\ell}$, $\phi(t,s_2) = \bs{\ell}_{2\ell}$ and $\varphi(r_2,t) = \bs{k}_1$, $\varphi(t,s_2) = \bs{k}_2$, so that $\SS_{r_2 t} = \SS^{\bs{\ell}_{1\ell} \bs{k}_1}$, and $\bss{Q}_{ts_2} = \bss{Q}^{{\bs{\ell}_{2\ell} \bs{k}_2}}$.  Consequently, the right-hand side in Eq.~\eqref{eq:A11} equals
\[
 \sum_{\ell=1}^{L}\sum_{\bs{k}_1,\bs{k}_2 \in \bs{\Phi}_2} \left|\mathcal{C}_{\ell\bs{k}_1}^{(r_2)} \cap \mathcal{C}_{\ell\bs{k}_2}^{(s_2)}\right | \SS^{\bs{\ell}_{1\ell} \bs{k}_1}\bss{Q}^{{\bs{\ell}_{2\ell} \bs{k}_2}} =  \sum_{\ell=1}^{L}\sum_{\bs{k}_1,\bs{k}_2 \in \bs{\Phi}_2} \sum_{t \in \mathcal{C}_{\ell\bs{k}_1}^{(r_2)} \cap \mathcal{C}_{\ell\bs{k}_2}^{(s_2)}} \SS_{r_2 t}\bss{Q}_{ts_2} = [\SS \bss{Q}]_{r_2s_2} ,
\]
as claimed.
\end{proof}
In view of Proposition \ref{prop:equal2}, the following result follows directly from Proposition \ref{prop:S-inv}.
\begin{corollary}\label{cor:A1}
Suppose that $\mathcal{G}$ is such that the \ref{ass:main} holds. If $\S$ is invertible, then $\S^{-1} \in  \mathcal{S}_{\mathcal{G}}$. Similarly, if $\S_\infty$ is invertible, then $\S_\infty^{-1} \in \mathcal{S}_{\mathcal{G}}$.
\end{corollary}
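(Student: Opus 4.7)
The statement is essentially an immediate combination of the two preceding propositions, so the proof plan is very short. The plan is to chain Proposition \ref{prop:equal2} with Proposition \ref{prop:S-inv} and observe that no further work is needed.

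First, I would invoke Proposition \ref{prop:equal2}: since $\mathcal{G}$ satisfies the \ref{ass:main}, both $\S$ and $\S_\infty$ belong to $\mathcal{S}_{\mathcal{G}}$. Next, I would note that $\S$ (respectively $\S_\infty$) is by construction symmetric, so the invertibility assumption places it in the hypothesis of Proposition \ref{prop:S-inv}. That proposition asserts that $\mathcal{S}_{\mathcal{G}}$ is closed under inversion, i.e., an invertible $\SS \in \mathcal{S}_{\mathcal{G}}$ has $\SS^{-1} \in \mathcal{S}_{\mathcal{G}}$. Applying this to $\SS = \S$ gives $\S^{-1} \in \mathcal{S}_{\mathcal{G}}$, and applying it to $\SS = \S_\infty$ gives $\S_\infty^{-1} \in \mathcal{S}_{\mathcal{G}}$.

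There is no genuine obstacle here; everything has already been proved. The only thing one might want to verify in passing is that $\S_\infty$ is symmetric, which is immediate from its definition in Eq.~\eqref{eq:sigmainf} as a covariance-type expression (or alternatively from the limit $n\S \to \S_\infty$ of symmetric matrices). With that remark in place, the corollary follows in one line from the two propositions and requires no separate argument.
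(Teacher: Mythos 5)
Your proposal is correct and matches the paper's own argument: the paper likewise obtains the corollary directly by combining Proposition \ref{prop:equal2} (which places $\S$ and $\S_\infty$ in $\mathcal{S}_{\mathcal{G}}$ under the \ref{ass:main}) with Proposition \ref{prop:S-inv} (closure of $\mathcal{S}_{\mathcal{G}}$ under inversion). Your additional remark on the symmetry of $\S_\infty$ is a harmless extra check and nothing more is needed.
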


\subsection{An improved estimator of $\S$} \label{subapp:sigma-tilde}

Throughout this section, assume that $\mathcal{G}$ is a partition of $\{1,\dots, d\}$ such that the \ref{ass:main} holds. The empirical estimator $\Sh$ defined in \ref{subapp:sigma-hat} does not exploit the  structural information provided by $\mathcal{C}_{\mathcal{G}}$. It is natural to think that, as was the case for $\t$, we can improve the estimation of $\S$ by averaging its entries with respect to the sets in $\mathcal{C}_{\mathcal{G}}$. But we can do even better by exploiting the following decomposition of $\S$. To this end, write
\begin{align} \label{eq:Sigma-decomp}
	\S = \Ta - [2(2n - 3)/\{n(n-1)\}] (\t + \mathbf{1})(\t + \mathbf{1})^{\top},
\end{align}
where  $\mathbf{1}$ is the $p$-dimensional vector of ones and {$\Ta$ is a $p \times p$ matrix gathering the terms involving $\theta_1,\dots, \theta_4$ and $\vartheta_1,\vartheta_2$ in Eq.~\eqref{eq:sigma}. It easily follows from the proof of Proposition \ref{prop:equal2}  that $\Ta \in \mathcal{S}_{\mathcal{G}}$ as well as $(\t + \mathbf{1})(\t + \mathbf{1})^{\top} \in \mathcal{S}_{\mathcal{G}}$. However, the structure of $(\t + \mathbf{1})(\t + \mathbf{1})^{\top}$ is even simpler, because the overlaps between pairs of indices described by the function $\varphi$ need not be taken into account. Specifically,}  $(\t + \mathbf{1})(\t + \mathbf{1})^{\top} \in \mathcal{T}_{\mathcal{B}_{\mathcal{G}}} \subset \mathcal{S}_{\mathcal{G}}$, where
\[
{\mathcal{T}_{\mathcal{B}_{\mathcal{G}}} = \{ \bss{R} \in \mathbb{R}^{p \times p} : \forall_{\ell_1,\ell_2 \in \{1,\dots, L\}} \, r_1,r_2 \in \mathcal{B}_{\ell_1} \text{ and } s_1,s_2 \in \mathcal{B}_{\ell_2} \; \Rightarrow \; \bss{R}_{r_1 s_1} = \bss{R}_{r_2s_2}\}. }
\]
That is, $(\t + \mathbf{1})(\t + \mathbf{1})^{\top}$ possesses a block structure similar to $\T$, but defined in accordance with the clustering $\mathcal{B}_{\mathcal{G}}$ instead of $\mathcal{G}$. In particular, $(\t + \mathbf{1})(\t + \mathbf{1})^{\top}$ possesses $L$ diagonal blocks, as opposed to $K$ diagonal blocks for $\T$ including diagonal blocks that correspond to clusters in $\mathcal{G}$ of size $1$. The decomposition \eqref{eq:Sigma-decomp} of Eq. $\S$ is illustrated next.
\begin{example}\label{ex:A.2}\em
The decomposition of $\S$ from Example \ref{ex:A.1} according to Eq.~\eqref{eq:Sigma-decomp} is depicted in Figure~\ref{fig:Sigma-decomp}. The matrix $\S$ clearly inherits its structure from $\Ta \in \mathcal{S}_{\mathcal{G}}$; the structure of $(\t + \mathbf{1})(\t + \mathbf{1})^{\top}$ is considerably simpler.
\begin{figure}[t!]
	\centering
	\begin{minipage}{.25\textwidth}
	    \centering
    	    \includegraphics[width=1\linewidth]{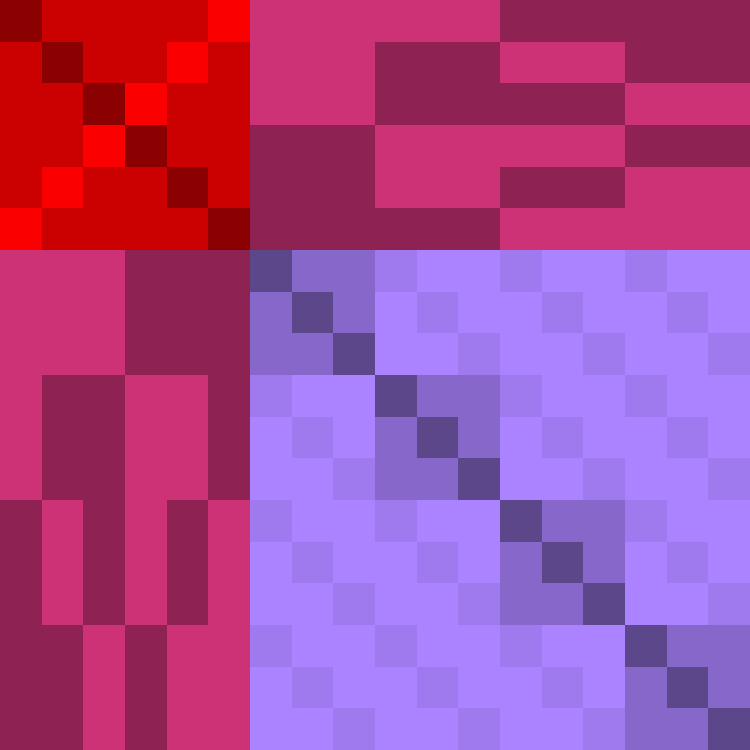}\\
    	    $\phantom{)^\top}\S$
	\end{minipage}
	\begin{minipage}{.25\textwidth}
	    \centering
    	    \includegraphics[width=1\linewidth]{example-theta}	\\
    	    $\phantom{)^\top}\Ta$
	\end{minipage}
	\begin{minipage}{.25\textwidth}
	    \centering
    	   \includegraphics[width=1\linewidth]{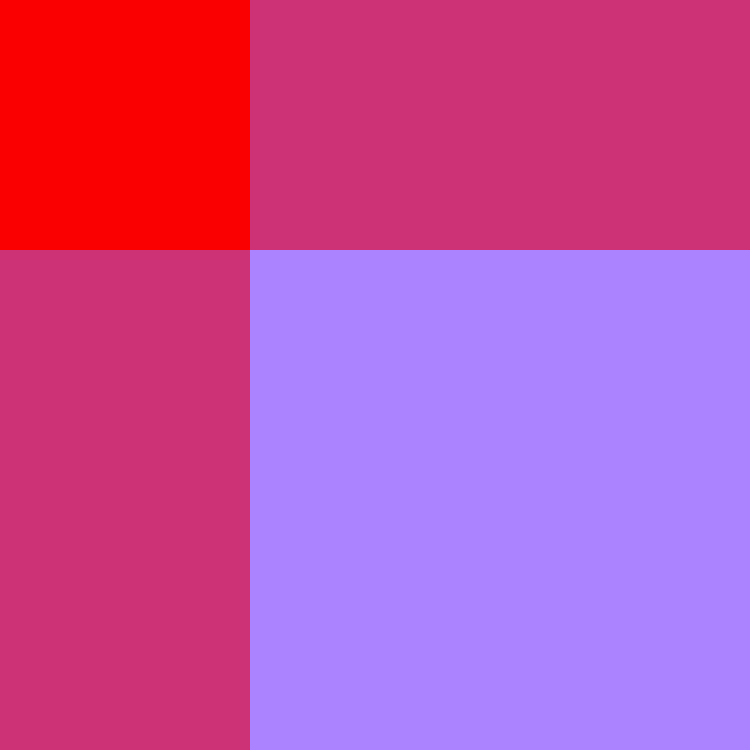}	\\
    	   $(\t+ \mathbf{1}) (\t+ \mathbf{1})^\top$
	\end{minipage}
	\caption{Submatrices of $\S$, $\Ta$ and $(\t + \mathbf{1})(\t + \mathbf{1})^\top$ from Example \ref{ex:A.2}. The same vectorization of $\T$ as in Example \ref{ex:A.1} is used.} \label{fig:Sigma-decomp}
\end{figure}
\end{example}
Let $\hat\Ta$ be {the plug-in empirical estimator of $\Ta$, defined by replacing, for each $(r,s) \in \{1,\dots, p\}^2$, the parameters $\theta_1,\dots, \theta_4$ and $\vartheta_1,\vartheta_2$ by their empirical estimates given in \ref{subapp:sigma-hat}.
Because $\Ta \in \mathcal{S}_{\mathcal{G}}$, we now define the improved estimator $\tilde{\Ta}$, which is in $\mathcal{S}_{\mathcal{G}}$ by construction. First, the upper triangular (including the diagonal) entries of $\tilde{\Ta}$ are simply the entries of $\hat \Ta$ averaged out over each block in $\mathcal{C}_{\mathcal{G}}$. Second, because $\tilde{\Ta}$ is symmetric, its lower triangular entries are obtained by symmetry.
Furthermore, let $\tt = \tt (\th | \mathcal{G})$ be as in Eq.~\eqref{eq:maxlike} and estimate $(\t + \mathbf{1}) (\t + \mathbf{1})^{\top}$ by $(\tt + \mathbf{1}) (\tt + \mathbf{1})^{\top} \in \mathcal{T}_{\mathcal{B}_\mathcal{G}}$, so that even more averaging is employed. The resulting estimator $\S$ is then
\begin{equation*}
\St = \tilde{\Ta} - [2(2n - 3)/\{n(n-1)\}] (\tt + \mathbf{1})(\tt + \mathbf{1})^{\top}.
\end{equation*}
Clearly, $\St \in \mathcal{S}_{\mathcal{G}}$. Using Eq.~\eqref{eq:sigmainf}, write $\S_\infty = \Ta_\infty - 4 (\t +1)(\t + 1)^\top$, where $\Ta_\infty$ has entries $16\,(\theta_1+\dots+ \theta_4)$. Note that from the proof of Proposition \ref{prop:equal2}, $\Ta_\infty \in \mathcal{S}_{\mathcal{G}}$. Hence, as $n \to \infty$, $n \tilde \Ta \to \Ta_\infty$ element-wise in probability. Furthermore, given that $\tt$ is a consistent estimator of $\t$ as per Theorem \ref{thm:reduced-variance}, $(\tt + \mathbf{1})(\tt + \mathbf{1})^{\top} \to (\t + \mathbf{1})(\t + \mathbf{1})^{\top}$. Put together,
\begin{align}
\label{eq:Stildeconv}
n\St \to \S_\infty
\end{align}
element-wise in probability. For the shrinkage estimator $\St_w=(1-w)\St + w\St_{\rm diag}$ it is easy to show that $\St_w \in \mathcal{S}_{\mathcal{G}}$. Because $\St$ is consistent, so is $\St_w$, as long as $w\to0$
as $n\to\infty$.

Finally, note that the estimators $\Sh$, $\St$, and $\St_w$ may not be positive definite, in particular when $n$ is small. For the methodology presented in this paper, $\St_w$ needs to be invertible, and this is often easily achieved by a larger value of the shrinkage parameter $w$ when $n$ is small. In the data illustration and simulation study conducted in this paper, no problems with invertibility were encountered. If the estimator of $\S$ further needs to be positive semi-definite, one can project any of the estimators  $\Sh$, $\St$, and $\St_w$ to the cone of positive semi-definite matrices; this projection, say $\bar \S$, can be computed using the alternative direction of multipliers algorithm as described, e.g., in Appendix~A of \cite{Datta/Zou:2017}. Since the projection onto the cone of positive semidefinite matrices is a continuous mapping, $\bar \S$ will be consistent. 

\section{Proofs} \label{app:proofs} 

\setcounter{lemma}{0}
\setcounter{definition}{0}
\setcounter{proposition}{0}
\setcounter{equation}{0}
\setcounter{algorithm}{0}
\setcounter{example}{0}
\setcounter{remark}{0}
\setcounter{corollary}{0}
\setcounter{figure}{0}
\setcounter{table}{0}

This section is divided as follows. \ref{subapp:lemmas} contains six auxiliary lemmas that pertain
to the structure of ${\mathcal{S}_{\mathcal{G}}}$ and that are invoked in the proofs of Theorem \ref{thm:maxlike}, Theorem \ref{thm:reduced-variance} and Proposition \ref{prop:stop}. These proofs, along with the proofs of Proposition \ref{prop:qform} and Corollary \ref{cor:on-path}, are detailed in \ref{subapp:B.2}.
 \ref{subapp:proof-T-inv} contains additional results on the structural properties of the inverse of correlation matrices used in Section~\ref{sec:bijective}.

\subsection{Description of $\bs{\mathcal{S}_{\mathcal{G}}}$} \label{subapp:lemmas} 
We first present three auxiliary lemmas that pertain to the cardinality of the sets $\mathcal{C}_{\ell \bs{k}}^{(r)}$ defined in Eq.~\eqref{eq:slice}.

\begin{lemma} \label{lem:slice-cardinality}
{Let $\mathcal{G}$ be an arbitrary partition of $\{1,\dots, d\}$ and $\mathcal{B}_{\mathcal{G}}$ as in Eq.~\eqref{eq:B-cal}. Assume that $r,s \in \mathcal{B}_\ell$ for some $\ell\in\{1,\dots, L\}$. Then for all $\lambda \in \{1,\dots, L\}$, and all $\bs{\kappa} \in \bs{\Phi}_2$,
$|\mathcal{C}_{\lambda\bs{\kappa}}^{(r)}| = |\mathcal{C}_{\lambda\bs{\kappa}}^{(s)}|$.}
\end{lemma}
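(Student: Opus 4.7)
My plan is to produce an explicit bijection between $\mathcal{C}_{\lambda\bs{\kappa}}^{(r)}$ and $\mathcal{C}_{\lambda\bs{\kappa}}^{(s)}$. This is a purely combinatorial statement about the partition $\mathcal{B}_{\mathcal{G}}$ and the overlap function $\varphi$; notably, the \ref{ass:main} itself is not invoked.

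The first step is to build a cluster-preserving permutation $\pi$ of $\{1,\dots, d\}$ (one satisfying $j \in \mathcal{G}_k \Leftrightarrow \pi(j) \in \mathcal{G}_k$ for every $k$) such that $\{\pi(i_r), \pi(j_r)\} = \{i_s, j_s\}$ as unordered pairs. The existence of such a $\pi$ follows from $r,s \in \mathcal{B}_\ell$: writing $\mathcal{B}_\ell = \mathcal{B}_{k_1 k_2}$, if $k_1 < k_2$ then each of the pairs $\{i_r, j_r\}$ and $\{i_s, j_s\}$ contains exactly one index in $\mathcal{G}_{k_1}$ and one in $\mathcal{G}_{k_2}$, so $\pi$ may be taken as a product of at most two transpositions matching these elements within the corresponding clusters; if $k_1 = k_2$, both pairs lie in $\mathcal{G}_{k_1}$ and the matching is again realized by a transposition (or the identity) supported inside $\mathcal{G}_{k_1}$.

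Next I would lift $\pi$ to a permutation $\tilde\pi$ of $\{1,\dots, p\}$ by defining $\tilde\pi(t)$ to be the unique index $u$ with $\{i_u, j_u\} = \{\pi(i_t), \pi(j_t)\}$; this is well-defined because $\pi$ is a bijection. I then verify two properties: (a) $\tilde\pi(\mathcal{B}_\lambda) = \mathcal{B}_\lambda$, since block membership depends only on the clusters that contain $\{i_t, j_t\}$ and $\pi$ preserves clusters; and (b) $\varphi(s,\tilde\pi(t)) = \varphi(r,t)$ for every $t$, since $\pi$ being a bijection preserves the cardinality of $\{i_r,j_r\} \cap \{i_t,j_t\}$ and preserves the cluster memberships of its elements. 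Combining (a) and (b) gives $\tilde\pi(\mathcal{C}_{\lambda\bs{\kappa}}^{(r)}) \subseteq \mathcal{C}_{\lambda\bs{\kappa}}^{(s)}$; applying the same argument with $r$ and $s$ swapped yields the opposite inclusion, whence equality of cardinalities.

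The only real difficulty is the bookkeeping needed to verify (b): one must check, in each of the three cases defining $\varphi$ (no common index, one common index in some $\mathcal{G}_k$, or two common indices whose clusters determine $\bs{\kappa}$), that $\tilde\pi$ carries the overlap pattern at $(r,t)$ exactly onto the overlap pattern at $(s,\tilde\pi(t))$. This reduces quickly to the observation that $\pi$ restricted to $\{i_r,j_r\}\cap\{i_t,j_t\}$ maps it bijectively onto $\{i_s,j_s\}\cap\{\pi(i_t),\pi(j_t)\}$ while preserving cluster membership, so the case distinction is routine.
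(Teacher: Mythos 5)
Your argument is correct and follows essentially the same route as the paper's proof: a cluster-preserving permutation $\pi$ of $\{1,\dots,d\}$ carrying $\{i_r,j_r\}$ onto $\{i_s,j_s\}$ is lifted to a bijection of $\{1,\dots,p\}$ that fixes each block $\mathcal{B}_\lambda$ and preserves the overlap pattern $\varphi$, the paper merely handling the unordered-pair issue through a two-case choice of $\pi$ (mapping $(i_r,j_r)$ to $(i_s,j_s)$ or to $(j_s,i_s)$) rather than your unordered formulation. One cosmetic slip: when $k_1=k_2$ and $\{i_r,j_r\}$, $\{i_s,j_s\}$ are disjoint, a single transposition inside $\mathcal{G}_{k_1}$ does not suffice and a product of two transpositions is needed, but the existence of the required $\pi$ — which is all your argument uses — is unaffected.
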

\begin{remark}\em
Before proceeding with the proof of Lemma~\ref{lem:slice-cardinality}, let us illustrate the claim with the right panel of Figure~\ref{fig:ex-T-sigma-structure}. Because $\S \in \mathcal{S}_{\mathcal{G}}$, the sets  $\mathcal{C}_{\ell \bs{k}}^{(r)}$ for a fixed $r$ can be identified with the $r$th row of $\S$; different colors and intensities correspond to different values of $\ell$ and $\bs{k}$, respectively; see also Example \ref{ex:A.1}. Lemma~\ref{lem:slice-cardinality} implies that, for example, the number of cells with the same color and intensity in the first six rows (corresponding to ${\mathcal{B}}_1$) is the same.
\end{remark}
\begin{proof}
Fix arbitrary $\mathcal{G}$, $\ell\in\{1,\dots, L\}$ and $r,s \in \mathcal{B}_\ell$.  We will prove the assertion by showing that there exists a bijection $h:\{1,\dots,p\} \rightarrow \{1,\dots,p\}$ such that for all $\lambda \in \{1,\dots, L\}$ and $\bs{\kappa} \in \bs{\Phi}_2$,
\begin{align}\label{eq:implhrs}
	t \in \mathcal{C}_{\lambda\bs{\kappa}}^{(r)} \quad \Leftrightarrow \quad h(t) \in \mathcal{C}_{\lambda\bs{\kappa}}^{(s)},
\end{align}
for then obviously $|\mathcal{C}_{\lambda\bs{\kappa}}^{(r)}| = |\mathcal{C}_{\lambda\bs{\kappa}}^{(s)}|$. To this end, first identify $k_1,k_2, \in \{1,\dots, K\}$  such that $(i_r, j_r) \in \mathcal{G}_{k_1}\times  \mathcal{G}_{k_2}$. Note that then $\mathcal{B}_\ell = \mathcal{B}_{(k_1 \wedge k_2)(k_1 \vee k_2)}$. Because $s \in \mathcal{B}_\ell$ by assumption, either $(i_s, j_s) \in \mathcal{G}_{k_1}\times  \mathcal{G}_{k_2}$ or $(i_s, j_s) \in \mathcal{G}_{k_2}\times  \mathcal{G}_{k_1}$.

First assume that $(i_s, j_s) \in \mathcal{G}_{k_1}\times  \mathcal{G}_{k_2}$. Let $\pi$ be any permutation such that
\begin{equation}\label{eq:pibasic}
\forall_{i \in \{1,\dots, d\}} \; \forall_{k \in \{1,\dots, K\}} \quad i \in \mathcal{G}_k \: \Leftrightarrow\: \pi(i)\in \mathcal{G}_k,
\end{equation}
and further such that
\begin{equation}\label{eq:pi1}
\pi(i_r) = i_s, \quad \pi(j_r) = j_s.
\end{equation}
Because  $i_r \neq j_r$ and $i_s \neq j_s$, and $i_r, i_s \in \mathcal{G}_{k_1}$, $j_r, j_s \in \mathcal{G}_{k_2}$, such a permutation always exists, although it is generally not unique. Now define $h$ by
\begin{align}\label{eq:hrs}
h : \{1,\dots, p\} \to \{1,\dots, p\}: 
t  \mapsto t^* \text{ such that } i_{t^*} = \pi(i_t) \wedge \pi(j_t) \text{ and }  j_{t^*} = \pi(i_t) \vee \pi(j_t).
\end{align}

First observe that $h$ is well defined because for any $t \in \{1,\dots, p\}$, $i_t < j_t$, $\pi(i_t) \neq \pi(j_t)$ and hence $i_{t^*} < j_{t^*}$, so that $t^*$ indeed exists. Furthermore, $h$ is a bijection. This is because, for any $t^* \in \{1,\dots, p\}$ and $t$ such that $i_t = \pi^{-1}(i_{t^*}) \wedge  \pi^{-1}(j_{t^*})$ and $j_t = \pi^{-1}(i_{t^*}) \vee  \pi^{-1}(j_{t^*})$, $h(t) = t^*$. Next, Eq.~\eqref{eq:pibasic} implies that for any $\lambda \in \{1,\dots, L\}$, $t \in \mathcal{B}_\lambda$ if and only if $h(t) \in \mathcal{B}_\lambda$. To prove that $h$ satisfies Eq.~\eqref{eq:implhrs}, it thus remains to show that for any $\bs{\kappa} \in \bs{\Phi}_2$, $\varphi(r,t) = \bs{\kappa}$ if and only if $\varphi \{ s,h(t)\} = \bs{\kappa}$. This follows from the following facts, each of which is an immediate consequence of Eq.~\eqref{eq:pi1}:
\begin{itemize}
\item[(i)] For any $ i\in \{1,\dots,d\}$, $\{i\} \cap\{i_r, j_r\} = \emptyset$ if and only if $\{\pi(i)\} \cap\{i_s, j_s\} = \emptyset$.
\item[(ii)] For any $ i\in \{1,\dots,d\}$, $\{i\} \cap\{i_r, j_r\} = \{i_r\}$ if and only if $\{\pi(i)\} \cap\{i_s, j_s\} = \{i_s\}$; $i_r$ and $i_s$ are in the same cluster $\mathcal{G}_{k_1}$.
\item[(iii)] For any $ i\in \{1,\dots,d\}$, $\{i\} \cap\{i_r, j_r\} = \{j_r\}$ if and only if $\{\pi(i)\} \cap\{i_s, j_s\} = \{j_s\}$; $j_r$ and $j_s$ are in the same cluster~$\mathcal{G}_{k_2}$.
\end{itemize}

This concludes the proof in the case when $(i_s, j_s) \in \mathcal{G}_{k_1}\times  \mathcal{G}_{k_2}$. When $(i_s, j_s) \in \mathcal{G}_{k_2}\times  \mathcal{G}_{k_1}$, one can proceed analogously by constructing $h$ from an arbitrary fixed  permutation $\pi$ satisfying Eq.~\eqref{eq:pibasic} and such that $\pi(i_r) = j_s$ and $\pi(j_r) = i_s$.
\end{proof}
\begin{lemma} \label{lem:ratio}
Let $\mathcal{G}$ be an arbitrary partition of $\{1,\dots, d\}$ and $\mathcal{B}_{\mathcal{G}}$ as in Eq.~\eqref{eq:B-cal}. Then for any $\ell_1,\ell_2\in\{1,\dots, L\}$, $r \in \mathcal{B}_{\ell_1}$, $s \in \mathcal{B}_{\ell_2}$, and $\bs{k} \in \bs{\Phi}_2$, ${|\mathcal{C}_{\ell_2\bs{k}}^{(r)}|}/{|\mathcal{B}_{\ell_2}|} ={|\mathcal{C}_{\ell_1\bs{k}}^{(s)}|}/{|\mathcal{B}_{\ell_1}|}$.
\end{lemma}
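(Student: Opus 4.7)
The plan is to establish Lemma 2 by a double counting argument that leverages the uniform-cardinality property already proved in Lemma 1. Consider the set of ordered pairs $P = \{(r',s') \in \mathcal{B}_{\ell_1} \times \mathcal{B}_{\ell_2} : \varphi(r',s') = \bs{k}\}$. I will enumerate $|P|$ in two ways and compare the two expressions.

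First I would fix the first coordinate: for each $r' \in \mathcal{B}_{\ell_1}$, the slice $\{s' \in \mathcal{B}_{\ell_2} : \varphi(r',s') = \bs{k}\}$ is exactly $\mathcal{C}_{\ell_2 \bs{k}}^{(r')}$. Since $r,r' \in \mathcal{B}_{\ell_1}$, Lemma 1 (applied with $\lambda = \ell_2$ and $\bs{\kappa} = \bs{k}$) gives $|\mathcal{C}_{\ell_2 \bs{k}}^{(r')}| = |\mathcal{C}_{\ell_2 \bs{k}}^{(r)}|$, so $|P| = |\mathcal{B}_{\ell_1}|\cdot|\mathcal{C}_{\ell_2 \bs{k}}^{(r)}|$. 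Symmetrically, fixing the second coordinate and using the fact that $\varphi(r',s') = \varphi(s',r')$ (immediate from the three cases in the definition of $\varphi$), the slice is $\{r' \in \mathcal{B}_{\ell_1} : \varphi(s',r') = \bs{k}\} = \mathcal{C}_{\ell_1 \bs{k}}^{(s')}$. A second application of Lemma 1, this time with $s,s' \in \mathcal{B}_{\ell_2}$, yields $|\mathcal{C}_{\ell_1 \bs{k}}^{(s')}| = |\mathcal{C}_{\ell_1 \bs{k}}^{(s)}|$, so $|P| = |\mathcal{B}_{\ell_2}|\cdot|\mathcal{C}_{\ell_1 \bs{k}}^{(s)}|$. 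Equating the two expressions and dividing by $|\mathcal{B}_{\ell_1}|\cdot|\mathcal{B}_{\ell_2}|$ (both blocks are nonempty by assumption on $r$ and $s$) yields the claim.

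The main obstacle is essentially bookkeeping: one needs to verify that $\varphi$ is symmetric in its two arguments so that the two enumerations of $P$ are measuring the same thing, and that the representative-independence furnished by Lemma 1 applies uniformly across the block. Both are direct, so this proof is short once Lemma 1 is in hand; it is really a ``Fubini'' consequence of the blockwise homogeneity of the $\varphi$-fibers.
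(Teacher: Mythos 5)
Your proof is correct and is essentially the paper's own argument: the paper also double counts via Lemma~\ref{lem:slice-cardinality}, writing $|\mathcal{C}_{\bs{\ell}\bs{k}}| = \sum_{t\in\mathcal{B}_{\ell_1}}|\mathcal{C}_{\ell_2\bs{k}}^{(t)}| = |\mathcal{C}_{\ell_2\bs{k}}^{(r)}|\,|\mathcal{B}_{\ell_1}|$ and symmetrically $|\mathcal{C}_{\bs{\ell}\bs{k}}| = |\mathcal{C}_{\ell_1\bs{k}}^{(s)}|\,|\mathcal{B}_{\ell_2}|$, using the representative-independence exactly as you do. The only cosmetic difference is that you count the ordered product $\mathcal{B}_{\ell_1}\times\mathcal{B}_{\ell_2}$ (using the symmetry of $\varphi$), which handles $\ell_1=\ell_2$ uniformly, whereas the paper counts $\mathcal{C}_{\bs{\ell}\bs{k}}$ itself and disposes of the case $\ell_1=\ell_2$ separately as a trivial consequence of Lemma~\ref{lem:slice-cardinality}.
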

\begin{proof} Fix arbitrary $\ell_1,\ell_2\in\{1,\dots, L\}$, $r \in \mathcal{B}_{\ell_1}$, $s \in \mathcal{B}_{\ell_2}$, and $\bs{k} \in \bs{\Phi}_2$.
The case $\ell_1 = \ell_2$ trivially follows from Lemma~\ref{lem:slice-cardinality}.  For $\ell_1 \neq \ell_2$, set $\bs{\ell} = (\ell_1 \wedge \ell_2, \ell_1 \vee \ell_2)$. Using Lemma~\ref{lem:slice-cardinality} again, we then have that
\begin{align*}
|\mathcal{C}_{\bs{\ell} \bs{k}}| = \sum_{t \in \mathcal{B}_{\ell_1}} |\mathcal{C}_{\ell_2\bs{k}}^{(t)}| = \sum_{t \in \mathcal{B}_{\ell_1}} |\mathcal{C}_{\ell_2\bs{k}}^{(r)}| = |\mathcal{C}_{\ell_2\bs{k}}^{(r)}|\ |\mathcal{B}_{\ell_1}|.
\end{align*}
Similarly, summing over elements in $\mathcal{B}_{\ell_2}$, $|\mathcal{C}_{\bs{\ell} \bs{k}}| = |\mathcal{C}_{\ell_1\bs{k}}^{(s)}|\ |\mathcal{B}_{\ell_2}|$, which proves the claim.
\end{proof}

\begin{lemma} \label{lem:inter}
Let $\mathcal{G}$ be an arbitrary partition of $\{1,\dots, d\}$ and $\mathcal{B}_{\mathcal{G}}$ as in Eq.~\eqref{eq:B-cal}. Assume that $r_1, r_2 \in \mathcal{B}_{\ell_1}$ and $s_1, s_2 \in \mathcal{B}_{\ell_2}$ for some $\bs{\ell} = (\ell_1,\ell_2) \in \bs{\Phi}_1$. Further assume that $(r_1,s_1), (r_2, s_2) \in \mathcal{C}_{\bs{\ell} \bs{k}}$ for some $\bs{k} \in \bs{\Phi}_2$. Then for all $\lambda \in \{1,\dots, L\}$, and all $\bs{\kappa}_1, \bs{\kappa}_2 \in \bs{\Phi}_2$, one has $|\mathcal{C}_{\lambda\bs{\kappa}_1}^{(r_1)} \cap \mathcal{C}_{\lambda\bs{\kappa}_2}^{(s_1)}| = |\mathcal{C}_{\lambda\bs{\kappa}_1}^{(r_2)} \cap \mathcal{C}_{\lambda\bs{\kappa}_2}^{(s_2)}|$, i.e.,
\begin{align*}
	|\{ t \in \mathcal{B}_\lambda : \varphi(r_1, t) = \bs{\kappa}_1 ,  \varphi(s_1, t) = \bs{\kappa}_2\}| = |\{ t \in \mathcal{B}_\lambda : \varphi(r_2, t) = \bs{\kappa}_1 ,  \varphi(s_2, t) = \bs{\kappa}_2\}|.
\end{align*}
\end{lemma}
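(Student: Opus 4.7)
\medskip

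The plan is to mimic the bijection strategy used in Lemma~\ref{lem:slice-cardinality}, but now constructing a permutation $\pi$ of $\{1,\dots,d\}$ that simultaneously maps the unordered pair $\{i_{r_1},j_{r_1}\}$ to $\{i_{r_2},j_{r_2}\}$ and $\{i_{s_1},j_{s_1}\}$ to $\{i_{s_2},j_{s_2}\}$, while preserving cluster membership in the sense of Eq.~\eqref{eq:pibasic}. Given such a $\pi$, I would then lift it to a map $h:\{1,\dots,p\}\to\{1,\dots,p\}$ by Eq.~\eqref{eq:hrs}, i.e., $h(t)=t^*$ where $\{i_{t^*},j_{t^*}\}=\{\pi(i_t),\pi(j_t)\}$. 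As in Lemma~\ref{lem:slice-cardinality}, $h$ is a bijection (its inverse is built from $\pi^{-1}$), and $h$ preserves block membership ($t\in\mathcal{B}_\lambda$ iff $h(t)\in\mathcal{B}_\lambda$) thanks to the cluster-preserving property of $\pi$.

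The main work is therefore the construction of $\pi$, which boils down to case analysis on $\bs{k}\in\bs{\Phi}_2$. The hypothesis $\varphi(r_1,s_1)=\varphi(r_2,s_2)=\bs{k}$ forces the overlap patterns of the four-element tuples $(i_{r_1},j_{r_1},i_{s_1},j_{s_1})$ and $(i_{r_2},j_{r_2},i_{s_2},j_{s_2})$ to be identical, and the joint hypothesis $\phi(r_1,s_1)=\phi(r_2,s_2)=\bs{\ell}$ forces the two pairs of pairs to live in matching products of clusters. Concretely: if $\bs{k}=(0,0)$, the four indices on each side are distinct, and I pair them off cluster-by-cluster using any cluster-respecting permutation $\pi$ sending $i_{r_1}\mapsto i_{r_2}$, $j_{r_1}\mapsto j_{r_2}$, $i_{s_1}\mapsto i_{s_2}$, $j_{s_1}\mapsto j_{s_2}$ (extended arbitrarily on the remaining indices, still respecting clusters). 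If $\bs{k}=(0,k)$, exactly one index is shared on each side, lying in $\mathcal{G}_k$; I choose $\pi$ so that the shared index on the left maps to the shared index on the right, and the three remaining indices map in the cluster-consistent way dictated by $\bs{\ell}$. If $\bs{k}=(k_1,k_2)$ with $k_1,k_2>0$, then $\{i_{r_1},j_{r_1}\}=\{i_{s_1},j_{s_1}\}$ and $\{i_{r_2},j_{r_2}\}=\{i_{s_2},j_{s_2}\}$, and I choose $\pi$ mapping $\{i_{r_1},j_{r_1}\}$ onto $\{i_{r_2},j_{r_2}\}$, respecting which element lies in $\mathcal{G}_{k_1}$ versus $\mathcal{G}_{k_2}$. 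In each case the required cluster-preserving permutation exists because clusters on the two sides contain equinumerous copies of the indices to be matched.

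With $\pi$ (and hence $h$) in hand, the verification is straightforward: for any $t\in\{1,\dots,p\}$ and any $\lambda\in\{1,\dots,L\}$, $\bs{\kappa}_1,\bs{\kappa}_2\in\bs{\Phi}_2$, I would show that $t\in\mathcal{B}_\lambda$, $\varphi(r_1,t)=\bs{\kappa}_1$ and $\varphi(s_1,t)=\bs{\kappa}_2$ iff the analogous equalities hold for $h(t)$ and $(r_2,s_2)$. This uses exactly the three bullet points (i)--(iii) in the proof of Lemma~\ref{lem:slice-cardinality}, applied once with the pair $(i_{r_1},j_{r_1})\mapsto(i_{r_2},j_{r_2})$ and once with $(i_{s_1},j_{s_1})\mapsto(i_{s_2},j_{s_2})$, to conclude that cluster overlaps between $t$ and $r_1$ (resp.\ $s_1$) are preserved after applying $h$.

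The main obstacle is a clean handling of the case analysis for constructing $\pi$, especially the intermediate case $\bs{k}=(0,k)$: one must keep track of which of the four indices coincides with which, and verify that the prescribed assignments on these (at most) four indices are mutually consistent and cluster-respecting, so that $\pi$ can be extended to a permutation of the whole set $\{1,\dots,d\}$ satisfying Eq.~\eqref{eq:pibasic}. Once this bookkeeping is done, the bijection $h$ transfers the count $|\mathcal{C}_{\lambda\bs{\kappa}_1}^{(r_1)}\cap\mathcal{C}_{\lambda\bs{\kappa}_2}^{(s_1)}|=|\mathcal{C}_{\lambda\bs{\kappa}_1}^{(r_2)}\cap\mathcal{C}_{\lambda\bs{\kappa}_2}^{(s_2)}|$ without further work.
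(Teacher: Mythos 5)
Your proposal follows essentially the same route as the paper's proof: construct a cluster-preserving permutation $\pi$ of $\{1,\dots,d\}$ mapping $\{i_{r_1},j_{r_1}\}$ to $\{i_{r_2},j_{r_2}\}$ and $\{i_{s_1},j_{s_1}\}$ to $\{i_{s_2},j_{s_2}\}$, lift it via Eq.~\eqref{eq:hrs} to a block-preserving bijection $h$ of $\{1,\dots,p\}$, and split into the three cases $\bs{k}=(0,0)$, $(0,k)$ and $(k_1,k_2)$ with $k_1,k_2>0$. The bookkeeping you identify as the main obstacle in the case $\bs{k}=(0,k)$ is exactly what the paper's Case~II carries out (verifying that the matched indices lie in common clusters, including the degenerate subcase forcing $k=k_{11}=k_{12}$), so your plan is correct and coincides with the published argument.
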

\begin{proof}
Proceed similarly as in the proof of Lemma~\ref{lem:slice-cardinality} and define a function $h$ through Eq.~\eqref{eq:hrs} from a certain permutation $\pi$ of $(1,\dots, d)$ satisfying Eq.~\eqref{eq:pibasic}. As argued in the proof of Lemma~\ref{lem:slice-cardinality}, $h$ is then a well-defined bijection such that for all $\lambda \in \{1,\dots, L\}$, $t \in \mathcal{B}_\lambda$ if and only if $h(t) \in \mathcal{B}_\lambda$. If $h$ is further such that for each $t \in \{1,\dots, p\}$ and $\bs{\kappa}_1, \bs{\kappa}_2 \in \bs{\Phi}_2$,
\begin{equation}\label{eq:hreq}
\varphi(r_1, t) = \bs{\kappa}_1   \quad \Leftrightarrow \quad \varphi \{r_2, h(t)\} = \bs{\kappa}_1  \qquad \text{and} \qquad
\varphi(s_1, t) = \bs{\kappa}_2   \quad \Leftrightarrow \quad \varphi \{ s_2, h(t)\} = \bs{\kappa}_2,
\end{equation}
then one has that, for all $t \in \{1,\dots, p\}$, $t \in \mathcal{C}_{\lambda\bs{\kappa}_1}^{(r_1)} \cap \mathcal{C}_{\lambda\bs{\kappa}_2}^{(s_1)}$ if and only if $h(t) \in \mathcal{C}_{\lambda\bs{\kappa}_1}^{(r_2)} \cap \mathcal{C}_{\lambda\bs{\kappa}_2}^{(s_2)}$, and consequently that $|\mathcal{C}_{\lambda\bs{\kappa}_1}^{(r_1)} \cap \mathcal{C}_{\lambda\bs{\kappa}_2}^{(s_1)}| = |\mathcal{C}_{\lambda\bs{\kappa}_1}^{(r_2)} \cap \mathcal{C}_{\lambda\bs{\kappa}_2}^{(s_2)}|$, as claimed.

In contrast to the proof of  Lemma~\ref{lem:slice-cardinality}, the properties of the permutation $\pi$ require a more cumbersome case distinction. To this end, fix $\bs{\ell} = (\ell_1,\ell_2) \in \bs{\Phi}_1$, $\bs{k} \in \bs{\Phi}_2$, and $(r_1,s_1), (r_2, s_2) \in \mathcal{C}_{\bs{\ell} \bs{k}}$  such that $r_1, r_2 \in \mathcal{B}_{\ell_1}$ and $s_1, s_2 \in \mathcal{B}_{\ell_2}$.  Now let $k_{11},k_{12},k_{21},k_{22} \in \{1,\dots, K\}$ be such $\mathcal{B}_{\ell_1} = \mathcal{B}_{k_{11}k_{12}}$  and $\mathcal{B}_{\ell_2} = \mathcal{B}_{k_{21}k_{22}}$. Without loss of generality, assume that
\begin{align}\label{eq:wlog}
	(i_{r_1},j_{r_1}),(i_{r_2},j_{r_2}) \in \mathcal{G}_{k_{11}} \times \mathcal{G}_{k_{12}} \quad \text{and} \quad (i_{s_1},j_{s_1}),(i_{s_2},j_{s_2}) \in \mathcal{G}_{k_{21}} \times \mathcal{G}_{k_{22}}.
\end{align}
\smallskip
{\it Case I.} $\bs{k} = \varphi(r_1,s_1) = \varphi(r_2,s_2) = (0,0)$. Here,
\begin{equation}\label{eq:card4}
|\{i_{r_1}, j_{r_1}, i_{s_1}, j_{s_1}\}| = |\{i_{r_2}, j_{r_2}, i_{s_2}, j_{s_2}\}| =4,
\end{equation}
and $\pi$ is an arbitrary fixed permutation with the property \eqref{eq:pibasic} and such that
\begin{equation}\label{eq:pi2}
\pi(i_{r_1}) = i_{r_2}, \quad \pi(j_{r_1}) = j_{r_2}, \quad \pi(i_{s_1}) = i_{s_2}, \quad \pi(j_{s_1}) = j_{s_2}.
\end{equation}

Such a permutation exists because of Eqs. \eqref{eq:wlog} and \eqref{eq:card4}, although it is generally not unique. Because of Eq.~\eqref{eq:pi2}, one has that, for any $i \in \{1,\dots, d\}$,
\begin{itemize}
\item[(i)] $\{i\} \cap \{i_{r_1}, j_{r_1}\} = \emptyset$ if and only if $\{\pi(i)\} \cap \{i_{r_2}, j_{r_2}\} = \emptyset$, and  $\{i\} \cap \{i_{s_1}, j_{s_1}\} = \emptyset$ if and only if $\{\pi(i)\} \cap \{i_{s_2}, j_{s_2}\} = \emptyset$;
\item[(ii)] $\{i\} \cap\{i_{r_1}, j_{r_1}\} = \{i_{r_1}\}$ if and only if $\{\pi(i)\} \cap\{i_{r_2}, j_{r_2}\} = \{i_{r_2}\}$; $i_{r_1}$ and $i_{r_2}$ are elements of the same cluster $\mathcal{G}_{k_{11}}$.
\item[(iii)] $\{i\} \cap\{i_{r_1}, j_{r_1}\} = \{j_{r_1}\}$ if and only if $\{\pi(i)\} \cap\{i_{r_2}, j_{r_2}\} = \{j_{r_2}\}$; $j_{r_1}$ and $j_{r_2}$ are elements of the same cluster $\mathcal{G}_{k_{12}}$.
\item[(iv)] $\{i\} \cap\{i_{s_1}, j_{s_1}\} = \{i_{s_1}\}$ if and only if $\{\pi(i)\} \cap\{i_{s_2}, j_{s_2}\} = \{i_{s_2}\}$; $i_{s_1}$ and $i_{s_2}$ are elements of the same cluster $\mathcal{G}_{k_{21}}$.
\item[(v)] $\{i\} \cap\{i_{s_1}, j_{s_1}\} = \{j_{s_1}\}$ if and only if $\{\pi(i)\} \cap\{i_{s_2}, j_{s_2}\} = \{j_{s_2}\}$; $j_{s_1}$ and $j_{s_2}$ are elements of the same cluster $\mathcal{G}_{k_{22}}$.
\end{itemize}
Hence $h$ fulfills Eq.~\eqref{eq:hreq} and the proof is complete in this case.

\medskip
\noindent
{\it Case II.} $\bs{k} =\varphi(r_1,s_1) = \varphi(r_2,s_2) = (0,k)$ for some $k\in \{1,\dots, K\}$. In this case,
\begin{align*}
|\{i_{r_1}, j_{r_1}, i_{s_1}, j_{s_1}\}| = |\{i_{r_2}, j_{r_2}, i_{s_2}, j_{s_2}\}| =3.
\end{align*}

Observe that the three distinct elements of $\{i_{r_1}, j_{r_1}, i_{s_1}, j_{s_1}\}$ are $\{a_1,a_2, a_3\}$, say, such that $a_1 \in \{i_{r_1}, j_{r_1}\} \cap \{i_{s_1}, j_{s_1}\}$, $a_2 \in \{i_{r_1}, j_{r_1}\}\setminus \{a_1\}$, and $a_3 \in \{i_{s_1}, j_{s_1}\}\setminus \{a_1\}$. Similarly, $\{i_{r_2}, j_{r_2}, i_{s_2}, j_{s_2}\}=\{b_1,b_2,b_3\}$ such that $b_1 \in \{i_{r_2}, j_{r_2}\} \cap \{i_{s_2}, j_{s_2}\}$, $b_2 \in\{i_{r_2}, j_{r_2}\}\setminus \{b_1\} $, and $b_3 \in \{i_{s_2}, j_{s_2}\}\setminus \{b_1\}$. Note that one must then have
$$
\{i_{r_1}, j_{r_1}\} = \{a_1, a_2\}, \quad \{i_{s_1}, j_{s_1}\}=\{a_1, a_3\}, \quad \{i_{r_2}, j_{r_2}\} = \{b_1, b_2\}, \quad \{i_{s_2}, j_{s_2}\} = \{b_1, b_3\}.
$$

Furthermore, for $m \in \{1,2,3\}$, $a_m$ and $b_m$ are members of the same cluster. Indeed, given that $\bs{k} = (0,k)$, $a_1, b_1 \in \mathcal{G}_k$. From Eq.~\eqref{eq:wlog} it further follows that $a_2,b_2$ are in $\mathcal{G}_{k_{11}}$ and $\mathcal{G}_{k_{12}}$, respectively, if $a_2 = i_{r_1}$, $b_2 = i_{r_2}$, and $a_2 = j_{r_1}$, $b_2=j_{r_2}$, respectively. If $a_{2} = i_{r_1}$ and $b_2 = j_{r_2}$, then $a_1 = j_{r_1}$ and $b_1 = i_{r_2}$ and Eq.~\eqref{eq:wlog} and the fact that $a_1, b_1 \in \mathcal{G}_k$ together imply that $k =k_{11} = k_{12}$. Hence, $a_2, b_2 \in \mathcal{G}_{k}$. Similarly, if  $a_{2} = j_{r_1}$ and $b_2 = i_{r_2}$, we also have that $a_2, b_2 \in \mathcal{G}_{k}$. The verification of the fact that $a_3$ and $b_3$ are in the same cluster is analogous.

Now let $\pi$ be any permutation with the property \eqref{eq:pibasic} and such that
\begin{equation}\label{eq:pi3}
\pi(a_1) = b_1, \quad \pi(a_2) = b_2, \quad \pi(a_3) = b_3.
\end{equation}
Such a permutation indeed exists, although it is not unique; existence is guaranteed because the $a_m$s are all distinct and for $m \in \{1,2,3\}$, $a_m$ and $b_m$ are members of the same cluster. Because of Eq.~\eqref{eq:pi3}, one has that, for any $i \in \{1,\dots, d\}$,
\begin{itemize}
\item[(i)] $\{i\} \cap \{i_{r_1}, j_{r_1}\} = \emptyset$ if and only if $\{\pi(i)\} \cap \{i_{r_2}, j_{r_2}\} = \emptyset$, and  $\{i\} \cap \{i_{s_1}, j_{s_1}\} = \emptyset$ if and only if $\{\pi(i)\} \cap \{i_{s_2}, j_{s_2}\} = \emptyset$;
\item[(ii)] $\{i\} \cap\{i_{r_1}, j_{r_1}\} = \{a_1\}$ if and only if $\{\pi(i)\} \cap\{i_{r_2}, j_{r_2}\} = \{b_1\}$; $a_1$ and $b_1$ are elements of the same cluster $\mathcal{G}_{k}$.
\item[(iii)] $\{i\} \cap\{i_{r_1}, j_{r_1}\} = \{a_2\}$ if and only if $\{\pi(i)\} \cap\{i_{r_2}, j_{r_2}\} = \{b_2\}$; $a_2$ and $b_2$ are elements of the same cluster $\mathcal{G}_{k_{11}}$, $\mathcal{G}_{k_{12}}$ or $\mathcal{G}_{k}$, as the case may be.
\item[(iv)] $\{i\} \cap\{i_{s_1}, j_{s_1}\} = \{a_1\}$ if and only if $\{\pi(i)\} \cap\{i_{s_2}, j_{s_2}\} = \{b_1\}$; $a_1$ and $b_1$ are elements of the same cluster $\mathcal{G}_{k}$.
\item[(v)] $\{i\} \cap\{i_{s_1}, j_{s_1}\} = \{a_3\}$ if and only if $\{\pi(i)\} \cap\{i_{s_2}, j_{s_2}\} = \{b_3\}$; $a_3$ and $b_3$ are elements of the same cluster $\mathcal{G}_{k_{21}}$, $\mathcal{G}_{k_{22}}$ or $\mathcal{G}_{k}$, as the case may be.
\end{itemize}
Hence $h$ fulfills Eq.~\eqref{eq:hreq} and the proof is complete in this case.

\medskip
\noindent
{\it Case III.} $\bs{k} =\varphi(r_1,s_1) = \varphi(r_2,s_2) = (k_1,k_2)$ where $k_1,k_2 > 0$. In this case,
$|\{i_{r_1}, j_{r_1}, i_{s_1}, j_{s_1}\}| = |\{i_{r_2}, j_{r_2}, i_{s_2}, j_{s_2}\}| =2$.
Write $\{i_{r_1}, j_{r_1}, i_{s_1}, j_{s_1}\} = \{a_1 ,a_2\}$, and $\{i_{r_2}, j_{r_2}, i_{s_2}, j_{s_2}\} = \{b_1, b_2\}$, with $a_1, b_1 \in \mathcal{G}_{k_1}$ and $a_2, b_2 \in \mathcal{G}_{k_2}$. Now let $\pi$ be any permutation with the property \eqref{eq:pibasic} and such that
\begin{equation}\label{eq:pi4}
\pi(a_1) = b_1, \quad \pi(a_2) = b_2.
\end{equation}
Because of Eq.~\eqref{eq:pi4} and the fact that $ \{a_1 ,a_2\}=\{i_{r_1}, j_{r_1}\} \cap \{ i_{s_1}, j_{s_1}\}$, and $\{b_1, b_2\} = \{i_{r_2}, j_{r_2}\} \cap \{i_{s_2}, j_{s_2}\}$, one has, for any $i \in \{1,\dots, d\}$,
\begin{itemize}
\item[(i)] $\{i\} \cap \{i_{r_1}, j_{r_1}\} = \emptyset$ if and only if $\{\pi(i)\} \cap \{i_{r_2}, j_{r_2}\} = \emptyset$, and  $\{i\} \cap \{i_{s_1}, j_{s_1}\} = \emptyset$ if and only if $\{\pi(i)\} \cap \{i_{s_2}, j_{s_2}\} = \emptyset$;
\item[(ii)] $\{i\} \cap\{i_{r_1}, j_{r_1}\} = \{a_1\}$ if and only if $\{\pi(i)\} \cap\{i_{r_2}, j_{r_2}\} = \{b_1\}$; $a_1$ and $b_1$ are elements of the same cluster $\mathcal{G}_{k_1}$.
\item[(iii)] $\{i\} \cap\{i_{r_1}, j_{r_1}\} = \{a_2\}$ if and only if $\{\pi(i)\} \cap\{i_{r_2}, j_{r_2}\} = \{b_2\}$; $a_2$ and $b_2$ are elements of the same cluster $\mathcal{G}_{k_{2}}$.
\item[(iv)] $\{i\} \cap\{i_{s_1}, j_{s_1}\} = \{a_1\}$ if and only if $\{\pi(i)\} \cap\{i_{s_2}, j_{s_2}\} = \{b_1\}$; $a_1$ and $b_1$ are elements of the same cluster $\mathcal{G}_{k_1}$.
\item[(v)] $\{i\} \cap\{i_{s_1}, j_{s_1}\} = \{a_2\}$ if and only if $\{\pi(i)\} \cap\{i_{s_2}, j_{s_2}\} = \{b_2\}$; $a_2$ and $b_2$ are elements of the same cluster $\mathcal{G}_{k_2}$.
\end{itemize}
Hence $h$ fulfills Eq.~\eqref{eq:hreq} and the proof is complete in this case as well.
\end{proof}

The following lemma is the cornerstone of the proof of Theorem \ref{thm:maxlike}.
\begin{lemma} \label{lem:maxlike}
{Let $\mathcal{G}$ be an arbitrary partition of $\{1,\dots, d\}$. If $\SS \in \mathcal{S}_{\mathcal{G}}$} and $\B$ is the block membership matrix corresponding to $\mathcal{B}_{\mathcal{G}}$ in Eq.~\eqref{eq:B-cal}, then
$\B^\top \SS = \B^\top \SS \B \B^{+}$.

\end{lemma}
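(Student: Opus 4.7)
The plan is to establish the identity $\B^\top \SS = \B^\top \SS \B \B^+$ by entrywise comparison. First I would derive an explicit formula for $\B\B^+$. Since $\B$ is a $p\times L$ matrix with exactly one $1$ per row (each $r\in\{1,\dots,p\}$ belongs to exactly one $\mathcal{B}_\ell$), its columns are nonzero and mutually orthogonal, so $\B^\top \B = \mathrm{diag}(|\mathcal{B}_1|,\dots,|\mathcal{B}_L|)$ and $\B$ has full column rank. Hence $\B^+ = (\B^\top\B)^{-1}\B^\top$, and a direct computation yields
\[
(\B\B^+)_{ts} = \frac{\mathbf{1}(t,s\in\mathcal{B}_\ell \text{ for some } \ell)}{|\mathcal{B}_{\ell(s)}|},
\]
where $\ell(s)$ denotes the unique block containing $s$.

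Next I would compute both sides entrywise. For arbitrary $\ell\in\{1,\dots,L\}$ and $s\in\{1,\dots,p\}$, one has $(\B^\top\SS)_{\ell s} = \sum_{r\in\mathcal{B}_\ell}\SS_{rs}$, while substituting the formula above into the right-hand side gives
\[
(\B^\top \SS \B\B^+)_{\ell s} = \frac{1}{|\mathcal{B}_{\ell(s)}|}\sum_{t\in\mathcal{B}_{\ell(s)}}\sum_{r\in\mathcal{B}_\ell}\SS_{rt}.
\]
Equality therefore reduces to proving the following invariance: for every $\ell_1,\ell_2\in\{1,\dots,L\}$ and every $s,s'\in\mathcal{B}_{\ell_2}$, $\sum_{r\in\mathcal{B}_{\ell_1}}\SS_{rs}=\sum_{r\in\mathcal{B}_{\ell_1}}\SS_{rs'}$; the right-hand side above would then simply be an average of $|\mathcal{B}_{\ell(s)}|$ copies of the left-hand side and the identity would follow.

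The main obstacle is establishing this partial-sum invariance; this is where the assumption $\SS\in\mathcal{S}_{\mathcal{G}}$ and the combinatorics of the partition come into play. For any $r\in\mathcal{B}_{\ell_1}$ and $s\in\mathcal{B}_{\ell_2}$, the value $\phi(r,s)=(\ell_1\wedge\ell_2,\ell_1\vee\ell_2)\equiv\bs{\ell}$ is fixed, so $\SS_{rs}$ depends on $(r,s)$ only through $\varphi(r,s)$. Partitioning $\mathcal{B}_{\ell_1}$ according to the value of $\varphi(\cdot,s)$ via the slices $\mathcal{C}_{\ell_1\bs{k}}^{(s)}$ of Eq.~\eqref{eq:slice}, and writing $\SS^{\bs{\ell}\bs{k}}$ for the common value of $\SS_{rs}$ on $\mathcal{C}_{\bs{\ell}\bs{k}}$, I obtain
\[
\sum_{r\in\mathcal{B}_{\ell_1}}\SS_{rs} = \sum_{\bs{k}\in\bs{\Phi}_2}|\mathcal{C}_{\ell_1\bs{k}}^{(s)}|\,\SS^{\bs{\ell}\bs{k}},
\]
and an identical expression holds with $s$ replaced by $s'$. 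Because $s,s'\in\mathcal{B}_{\ell_2}$, Lemma~\ref{lem:slice-cardinality} applies and yields $|\mathcal{C}_{\ell_1\bs{k}}^{(s)}|=|\mathcal{C}_{\ell_1\bs{k}}^{(s')}|$ for every $\bs{k}\in\bs{\Phi}_2$, so the two sums coincide and the proof is complete.
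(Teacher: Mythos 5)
Your proof is correct, and it follows the same basic strategy as the paper's — an entrywise verification that exploits the block structure of $\SS \in \mathcal{S}_{\mathcal{G}}$ via the slice decomposition $\mathcal{B}_{\ell_1} = \cup_{\bs{k}\in\bs{\Phi}_2}\mathcal{C}^{(s)}_{\ell_1\bs{k}}$ — but it is organized around a different auxiliary fact. You reduce the identity to the invariance of the partial column sums $\sum_{r\in\mathcal{B}_{\ell_1}}\SS_{rs}$ as $s$ ranges over a fixed block $\mathcal{B}_{\ell_2}$, which you obtain directly from Lemma~\ref{lem:slice-cardinality}; the right-hand side $\B^\top\SS\B\B^+$ is then just an average of identical column sums, which makes the mechanism of the result quite transparent. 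The paper instead manipulates the full double sum for $[\B^\top\SS\B\B^+]_{\ell r}$ and pivots between slices anchored at different reference indices using the ratio identity $|\mathcal{C}^{(t)}_{\ell^*\bs{k}}|/|\mathcal{B}_{\ell^*}| = |\mathcal{C}^{(r)}_{\ell\bs{k}}|/|\mathcal{B}_{\ell}|$ of Lemma~\ref{lem:ratio} (itself a consequence of Lemma~\ref{lem:slice-cardinality}, and needed elsewhere, e.g., for Lemma~\ref{lem:gamma-sigma-rel}). So your route is marginally more economical in its prerequisites and arguably cleaner conceptually, while the paper's reuses machinery it needs anyway; both your explicit derivation of $\B^+=(\B^\top\B)^{-1}\B^\top$ and the resulting formula for $\B\B^+$ agree with Eq.~\eqref{eq:BBplus}, and the appeal to symmetry of $\varphi$ and of $\SS$ when writing $\SS_{rs}=\SS^{\bs{\ell}\bs{k}}$ is legitimate under the definition of $\mathcal{S}_{\mathcal{G}}$.
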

\begin{proof}
First note that, for any $\ell\in \{1,\dots, L\}$ and $r \in \{1,\dots, p\}$,
\begin{equation}\label{eq:B3}
[\B^\top \SS]_{\ell r} = \sum_{s=1}^p \B_{s \ell} \SS_{s r} = \sum_{s \in \mathcal{B}_\ell} \SS_{s r}
\end{equation}
and that $[\B^+]_{\ell r} = \boldsymbol{1}(r \in \mathcal{B}_\ell) |\mathcal{B}_\ell |^{-1}$. Also, for any $s \in \mathcal{B_\ell}$,
\begin{align} \label{eq:BBplus}
	[\B \B^+]_{rs} =  \boldsymbol{1}(r \in \mathcal{B}_\ell)|\mathcal{B}_\ell |^{-1}.
\end{align}
{Now fix an arbitrary $\ell\in \{1,\dots, L\}$ and $r \in \{1,\dots, p\}$ and find $\ell^*$ so that $r \in \mathcal{B}_{\ell^*}$. Then from Eqs. \eqref{eq:B3} and \eqref{eq:BBplus},
\begin{equation}\label{eq:B5}
[\B^\top \SS \B \B^{+}]_{\ell r} = \sum_{s=1}^p [\B^\top \SS]_{\ell s} [\B\B^+]_{s r}
	= \sum_{s=1}^p\sum_{t \in \mathcal{B}_\ell} \{\SS_{t s}/ |\mathcal{B}_{\ell^*}|\} \boldsymbol{1}(s \in \mathcal{B}_{\ell^*})
	= \sum_{s \in \mathcal{B}_{\ell^*}} \sum_{t \in \mathcal{B}_\ell} \{\SS_{t s}/|\mathcal{B}_{\ell^*}|\} = \sum_{t \in \mathcal{B}_\ell}  \frac{1}{{ |\mathcal{B}_{\ell^*}|}}\sum_{s \in \mathcal{B}_{\ell^*}} \SS_{t s}.
\end{equation}
Now set $\bs{\ell} = (\ell \wedge \ell^*, \ell \vee \ell^*)$  and for any $\bs{k} \in \bs{\Phi}_2$, let $\SS^{\bs{\ell}\bs{k}}$ denote the unique value such that $\SS_{ts} = \SS^{\bs{\ell}\bs{k}}$ whenever  $(t,s) \in \mathcal{C}_{\bs{\ell}\bs{k}}$. Because $\mathcal{B}_{\ell^*} = \cup_{\bs{k} \in \bs{\Phi}_2} \mathcal{C}_{\ell^* \bs{k}}^{(t)}$ for any $t \in \mathcal{B}_\ell$ and $\SS \in \mathcal{S}_{\mathcal{G}}$ by assumption, one has
$$
\sum_{s \in \mathcal{B}_{\ell^*}} \SS_{t s} = \sum_{\bs{k} \in \bs{\Phi}_2} \sum_{s \in \mathcal{C}_{\ell^* \bs{k}}^{(t)}} \SS_{t s} = \sum_{\bs{k} \in \bs{\Phi}_2} \sum_{s \in \mathcal{C}_{\ell^* \bs{k}}^{(t)}} \SS^{\bs{\ell} \bs{k}} = \sum_{\bs{k} \in \bs{\Phi}_2} |\mathcal{C}_{\ell^* \bs{k}}^{(t)}| \SS^{\bs{\ell} \bs{k}}.
$$
Using this and Lemma~\ref{lem:ratio}, the right-hand side in Eq.~\eqref{eq:B5} equals
$$
\sum_{t \in \mathcal{B}_\ell} \sum_{\bs{k} \in \bs{\Phi}_2}  \{|\mathcal{C}_{\ell^* \bs{k}}^{(t)}| /{ |\mathcal{B}_{\ell^*}|}\}\SS^{\bs{\ell} \bs{k}} = \sum_{t \in \mathcal{B}_\ell} \sum_{\bs{k} \in \bs{\Phi}_2}  \{|\mathcal{C}_{\ell \bs{k}}^{(r)}| /{ |\mathcal{B}_{\ell}|}\}\SS^{\bs{\ell} \bs{k}} =   \sum_{\bs{k} \in \bs{\Phi}_2}  |\mathcal{C}_{\ell \bs{k}}^{(r)}| \SS^{\bs{\ell} \bs{k}}.
$$
Now use the fact that $\mathcal{B}_{\ell} = \cup_{\bs{k} \in \bs{\Phi}_2} \mathcal{C}_{\ell \bs{k}}^{(r)}$ and that $\SS \in \mathcal{S}_{\mathcal{G}}$ to rewrite the right-hand side as
$$
\sum_{\bs{k} \in \bs{\Phi}_2}  \sum_{t \in \mathcal{C}_{\ell \bs{k}}^{(r)}} \SS^{\bs{\ell} \bs{k}} = \sum_{t \in \mathcal{B}_\ell} \SS_{tr},
$$
which is equal to the right-hand side in Eq.~\eqref{eq:B3}, as claimed.
}
\end{proof}
The next result is an immediate consequence of the properties of the Moore--Penrose pseudo-inverse.
\begin{lemma} \label{lem:idem}
{Let $\mathcal{G}$ be an arbitrary partition of $\{1,\dots, d\}$, $\B$ the block membership matrix corresponding to $\mathcal{B}_{\mathcal{G}}$ in Eq.~\eqref{eq:B-cal}, and $\G = \B\B^+$. Then $\G$ and   $\Id{p} - \G$ are idempotent, i.e., $\G \G = \G$ and $(\Id{p} - \G)(\Id{p} - \G) = \Id{p} - \G$.}
\end{lemma}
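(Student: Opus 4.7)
The plan is to verify both idempotency statements by a direct algebraic computation, relying only on the defining properties of the Moore--Penrose pseudo-inverse. Recall that one of the four Penrose conditions states $\B \B^+ \B = \B$; this is the only fact needed.

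\emph{Idempotency of $\G$.} First, I would write
\begin{align*}
\G \G = (\B \B^+)(\B \B^+) = \B (\B^+ \B \B^+) = \B \B^+ = \G,
\end{align*}
where the third equality uses another Penrose condition, $\B^+ \B \B^+ = \B^+$. (Alternatively one can write $\G\G = (\B\B^+\B)\B^+ = \B\B^+ = \G$ using $\B \B^+ \B = \B$.) This establishes that $\G$ is idempotent.

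\emph{Idempotency of $\Id{p} - \G$.} Expanding,
\begin{align*}
(\Id{p} - \G)(\Id{p} - \G) = \Id{p} - \G - \G + \G\G = \Id{p} - 2\G + \G = \Id{p} - \G,
\end{align*}
using the idempotency of $\G$ established in the previous step. This completes the proof.

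There is no genuine obstacle here; the claim follows immediately from the standard Penrose identities. The lemma is stated separately only because $\G$ and $\Id{p} - \G$ appear as the projection operators underlying Theorem~\ref{thm:maxlike}, Theorem~\ref{thm:reduced-variance}, and Proposition~\ref{prop:stop}, and their idempotency is invoked repeatedly in those proofs.
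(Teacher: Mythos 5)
Your proof is correct and matches the paper's approach: the paper simply asserts the lemma as an immediate consequence of the Moore--Penrose properties, and your computation via $\B\B^+\B = \B$ (or $\B^+\B\B^+ = \B^+$) plus the expansion of $(\Id{p}-\G)(\Id{p}-\G)$ is exactly the routine verification being invoked.
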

\begin{lemma} \label{lem:gamma-sigma-rel}
Let $\mathcal{G}$ be an arbitrary partition of $\{1,\dots, d\}$, $\B$ the block membership matrix corresponding to $\mathcal{B}_{\mathcal{G}}$ in Eq.~\eqref{eq:B-cal}, and $\G = \B\B^+$. Then for any $\SS \in \mathcal{S}_{\mathcal{G}}$, $\G \SS = \G \SS \G = \SS \G$.
\end{lemma}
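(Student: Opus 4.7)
The plan is to deduce this identity directly from Lemma~B4 (the preceding \texttt{maxlike} lemma) together with two auxiliary observations: that $\G$ is symmetric, and that $\SS$ is symmetric (the latter being part of the definition of $\mathcal{S}_{\mathcal{G}}$).

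First I would record that $\G = \B\B^+$ is a symmetric matrix. The columns of $\B$ are indicators of the disjoint blocks $\mathcal{B}_1, \dots, \mathcal{B}_L$, so they are mutually orthogonal and $\B$ has full column rank. Hence $\B^\top\B$ is the diagonal matrix with entries $|\mathcal{B}_\ell|$, and the Moore--Penrose pseudoinverse takes the explicit form $\B^+ = (\B^\top\B)^{-1}\B^\top$. Consequently
\[
\G \;=\; \B(\B^\top\B)^{-1}\B^\top
\]
is manifestly symmetric; this is also visible from the explicit entries $[\G]_{rs}=|\mathcal{B}_\ell|^{-1}\mathbf{1}(r\in\mathcal{B}_\ell)$ for $s\in\mathcal{B}_\ell$ derived in Eq.~(B.4) inside the proof of Lemma~B4.

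Next I would invoke Lemma~B4: since $\SS \in \mathcal{S}_{\mathcal{G}}$, we have $\B^\top \SS = \B^\top \SS \B \B^+ = \B^\top \SS \G$. Transposing both sides and using that $\SS^\top = \SS$ and $\G^\top = \G$ yields $\SS\B = \G\SS\B$. Right-multiplying by $\B^+$ gives $\SS\G = \G\SS\G$, which is one of the two desired equalities. For the other, transpose $\SS\G = \G\SS\G$: the left-hand side becomes $\G\SS$ and the right-hand side stays $\G\SS\G$ (symmetric since $\G$ and $\SS$ both are), yielding $\G\SS = \G\SS\G$. Chaining the two identities gives $\G\SS = \G\SS\G = \SS\G$, as claimed.

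The main obstacle is essentially conceptual rather than computational: one must recognize that Lemma~B4 is already doing the heavy lifting (it is there that the structural assumption $\SS\in\mathcal{S}_{\mathcal{G}}$ is really used, via the combinatorial identities from Lemmas~B1--B3 on slice cardinalities). Once Lemma~B4 is in hand, the present statement reduces to a three-line manipulation exploiting the symmetry of $\G$ and $\SS$, with no further appeal to the internal block structure of $\mathcal{S}_{\mathcal{G}}$.
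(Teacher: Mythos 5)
Your proof is correct, but it takes a genuinely different route from the paper. The paper establishes $\G\SS = \SS\G$ by a direct entry-wise computation: writing $[\G\SS]_{rs}$ as a block average, it uses the cardinality identity of Lemma~\ref{lem:ratio} to show this equals $[\SS\G]_{rs}$, and then obtains the middle equality $\G\SS\G = \G\SS$ from the idempotence of $\G$ (Lemma~\ref{lem:idem}). You instead observe that $\G=\B\B^+$ is symmetric (either from the Penrose conditions or from the explicit form $\B(\B^\top\B)^{-1}\B^\top$, valid since the blocks are disjoint and nonempty), invoke Lemma~\ref{lem:maxlike} to get $\B^\top\SS = \B^\top\SS\G$, and then extract all three equalities by transposition (using the symmetry of $\SS$ built into $\mathcal{S}_{\mathcal{G}}$) and right-multiplication by $\B^+$. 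This is sound and non-circular, since the paper's proof of Lemma~\ref{lem:maxlike} relies only on Lemmas~\ref{lem:slice-cardinality}--\ref{lem:ratio} and not on the present statement. Your argument buys brevity and makes clear that the combinatorial content is concentrated in Lemma~\ref{lem:maxlike}, with the rest pure linear algebra; the paper's computation is more self-contained and keeps the two lemmas logically parallel (each derived directly from the slice-cardinality results) rather than making this one a corollary of the other, and it exhibits explicitly where the block structure of $\mathcal{S}_{\mathcal{G}}$ enters the product $\G\SS$.
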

\begin{proof}
{For arbitrary $\bs{\ell} \in \bs{\Phi}_1$ and $\bs{k} \in \bs{\Phi}_2$, let $\SS^{\bs{\ell}\bs{k}}$ denote the unique value such that $\SS_{rs} = \SS^{\bs{\ell}\bs{k}}$ whenever  $(r,s) \in \mathcal{C}_{\bs{\ell}\bs{k}}$. Now fix arbitrary $r,s \in \{1,\dots, p\}$ and let $\ell_1,\ell_2$ be such that $r \in \mathcal{B}_{\ell_1}$ and $s \in \mathcal{B}_{\ell_2}$. Set $\bs{\ell} = (\ell_1 \wedge \ell_2, \ell_1 \vee \ell_2)$.} From Eq.~\eqref{eq:BBplus} and Lemma~\ref{lem:ratio} we can conclude that
\begin{align*}
	[\G \bss{S}]_{rs} =  \sum_{t \in \mathcal{B}_{\ell_1}} \{\SS_{t s}/|\mathcal{B}_{\ell_1}|\} = \sum_{\bs{k}\in \bs{\Phi}_2} \sum_{t \in \mathcal{C}_{\ell_1\bs{k}}^{(s)}} \{\SS^{\bs{\ell} \bs{k}}/|\mathcal{B}_{\ell_1}|\} = \sum_{\bs{k}\in \bs{\Phi}_2} \{|\mathcal{C}_{\ell_1\bs{k}}^{(s)}|/|\mathcal{B}_{\ell_1}|\}\ \SS^{\bs{\ell} \bs{k}}  =  \sum_{\bs{k}\in \bs{\Phi}_2} \{|\mathcal{C}_{\ell_2\bs\varphi}^{(r)}|/|\mathcal{B}_{\ell_2}|\} \SS^{\bs{\ell} \bs{k}} = [\bss{S} \G]_{rs},
\end{align*}
proving that $\G \SS = \SS \G$. Furthermore,  $\G \bss{S} = \bss{S} \G$ and Lemma~\ref{lem:idem} together imply that $\G \G \bss{S} = \G \bss{S} \G$. Using the idempotence of $\G$ again, this simplifies to $ \G \bss{S} = \G \bss{S} \G$.
\end{proof}
\begin{remark} \label{rem:gamma-sigma-rel}
{Suppose that $\mathcal{G}$ satisfies the \ref{ass:main}. Proposition \ref{prop:equal2} and Corollary \ref{cor:A1} along with Lemma~\ref{lem:idem} imply that $(\Id{p} - \G) \bss{S} = (\Id{p} - \G) \bss{S} (\Id{p} - \G) = \bss{S} (\Id{p} - \G)$ holds for both $\bss{S}=\S$ and $\bss{S}=\S^{-1}$}.
\end{remark}

\subsection{Proofs from Sections \ref{sec:improved} and \ref{sec:learning}}\label{subapp:B.2}
 
\begin{proof}[Proof of Theorem \ref{thm:maxlike}]
Observe that any $\boldsymbol{t} \in \mathcal{T}_{\mathcal{G}}$ can be expressed as $\B\boldsymbol{t}^*$ for some $\boldsymbol{t}^* \in [-1,1]^L$. Solving the equation $\partial \mathcal{L}(\B \boldsymbol{t}^*|\th,\S) /\partial \boldsymbol{t}^* = \mathbf{0}$ for $\boldsymbol{t}^*$ gives as the unique solution
$\tt^* = ( \B^\top \S^{-1} \B)^{-1} \B^\top \S^{-1} \th$.
From Proposition \ref{prop:S-inv} and Lemma~\ref{lem:maxlike}, we have that $\B^\top \S^{-1} = \B^\top \S^{-1} \B \B^{+}$. Consequently, $\tt^*= \B^{+} \th$ and $\tt = \G \th$ given that $\tt = \B \tt^*$. The expression for $\tt(\th|\mathcal{G})_r$ follows immediately from Eq.~\eqref{eq:BBplus} in the proof of Lemma~\ref{lem:maxlike}. 
\end{proof}

\begin{proof}[Proof of Theorem \ref{thm:reduced-variance}] Because $\tt - \t = \G(\th - \t)$, (i) follows from Eq.~\eqref{eq:asstau} and the fact that $\G \S_\infty \G = \G \S_\infty$ by Lemma~\ref{lem:gamma-sigma-rel}. From Lemma~\ref{lem:idem}, $\S_\infty - \G\S_\infty = (\Id{p} - \G) \S_\infty = (\Id{p} - \G)^\top \S_\infty (\Id{p} - \G)$, where $\Id{p}$ denotes the $p\times p$ identity matrix.  Consequently, (ii) follows from the fact that $\S_\infty$ is nonnegative definite. 
\end{proof}

\begin{proof}[Proof of Proposition \ref{prop:uniqueG}]
Suppose that $\mathcal{G}=\{\mathcal{G}_1,\ldots, \mathcal{G}_K\}$ and $\mathcal{G}^*=\{\mathcal{G}_1^*,\ldots, \mathcal{G}_K^*\}$ are distinct partitions with $|\mathcal{G}| = |\mathcal{G}^*|$ that satisfy the \ref{ass:main}.  The claim follows from the fact that there must then necessarily exist a coarser partition for which the \ref{ass:main} also holds. Assume, without loss of generality, that  $A = \mathcal{G}_1 \cap \mathcal{G}_{1}^* \neq \emptyset$ and $B = \mathcal{G}_2 \cap \mathcal{G}_{1}^* \neq \emptyset$.  We will now show that in that case, $\{\mathcal{G}_1 \cup \mathcal{G}_2,\ldots, \mathcal{G}_K\}$ satisfies the \ref{ass:main}. To this end, let $\U \sim C$ and fix two arbitrary indices $i \in \mathcal{G}_1$, $j \in \mathcal{G}_2$. It suffices to prove that for the permutation $\pi$ given by $\pi(i) = j$, $\pi(j) =i$ and $\pi(k)=k$ for all $k \not \in \{i,j\}$, 
\begin{equation}\label{eq:Upi}
(U_1, \ldots U_d) \eqdis (U_{\pi(1)}, \ldots, U_{\pi(d)}).
\end{equation}

\medskip
\noindent
{\it Case I.} If $i \in A$ and $j \in B$, Eq.~\eqref{eq:Upi} follows at once from the fact that $\mathcal{G}^*$ satisfies the \ref{ass:main}. 

\medskip
\noindent
{\it Case II.} If $i \in A$ and $j \not \in B$, pick an arbitrary $j^* \in B$ and define the permutations $\pi_1$ and $\pi_2$ by $\pi_1(j) = j^*$, $\pi_1(j^*) = j$, $\pi_1(k) =k$ for all $k \not \in\{j,j^*\}$, and $\pi_2(i) = j^*$, $\pi_2(j^*) = i$, $\pi_2(k) = k$ for all $k \not \in \{i, j^*\}$. Because $\mathcal{G}$ and $\mathcal{G}^*$ satisfy the \ref{ass:main}, 
\begin{equation}\label{eq:Upii}
(U_1, \ldots U_d) \eqdis (U_{\pi_\ell(1)}, \ldots, U_{\pi_\ell(d)})
\end{equation}
for $\ell \in \{1,2\}$. The validity of Eq.~\eqref{eq:Upi} follows from the fact that $\pi_1 \circ \pi_2 \circ \pi_1 = \pi$.

\medskip
\noindent
{\it Case III.} If $i \not \in A$ and $j \in B$, Eq.~\eqref{eq:Upi}  follows analogously as in the previous case by picking $i^* \in A$. 

\medskip
\noindent
{\it Case IV.} If $i \not \in A$ and $j \not \in B$, pick $i^* \in A$, $j^* \in B$ and define the permutations $\pi_1$, $\pi_2$ and $\pi_3$ as follows. For all $k \not \in \{i, i^*\}$, $\pi_1(k) = k$ while $\pi_1(i) = i^*$ and $\pi_1(i^*) =i$; for all $k \not \in \{j, j^*\}$, $\pi_2(k) =k$ while $\pi_2(j) = j^*$ and $\pi(j^*) = j$; for all $k \not \in \{i^*, j^*\}$, $\pi_3(k) =k$ while $\pi_3(i^*) = j^*$ and $\pi_3(j^*) = i^*$. Then Eq.~\eqref{eq:Upii} holds for $\ell \in \{1,2\}$ and $\ell=3$ because $\mathcal{G}$ and $\mathcal{G}^*$ satisfy the \ref{ass:main}, respectively. The identity \eqref{eq:Upi} follows because $\pi= \pi_1\circ \pi_3 \circ \pi_2 \circ \pi_3 \circ \pi_1$.
\end{proof}

\begin{proof}[Proof of Proposition \ref{prop:qform}]
First, note that as $n\to \infty$, $\S^{-1}/n \to \S^{-1}_\infty$ in probability given that $A \mapsto A^{-1}$ is a continuous map for nonsingular matrices \citep{Stewart:1969}. Now write
$(\th - \tt)^\top (\hat\S^{-1}/n) (\th - \tt)= \th^\top(\Id{p} - \G) (\hat\S^{-1}/n) (\Id{p} - \G)\th$.
Because as $n\to \infty$, $\th \to \t$ in probability by Eq.~\eqref{eq:asstau}, $\th^\top(\Id{p} - \G) (\hat\S^{-1}/n) (\Id{p} - \G)\th$ converges to $\t^\top (\Id{p} - \G) \S_\infty^{-1} (\Id{p} - \G) \t$ in probability, proving the statement (ii). When $\mathcal{G}$ fulfills the \ref{ass:main}, $\G \t = \t$ and hence $(\Id{p} - \G) \t = \boldsymbol{0}_p$, proving (i).
\end{proof}

\begin{proof}[Proof of Corollary \ref{cor:on-path}]
Suppose, without loss of generality, that $\mathcal{G}\neq \mathcal{G}^{(d)}$ and $\mathcal{G} \neq \mathcal{G}^{(1)}$; otherwise, it is included in any path by design. Let $\mathcal{A}$ be the set of all partitions for which the \ref{ass:main} holds and $\mathcal{N}$ be the set of all partitions for which the \ref{ass:main} does not hold. For an arbitrary partition $\mathcal{G}^*$, let $\G^*=\B \B^+$, where $\B$ is the block membership matrix corresponding to $\mathcal{B}_{\mathcal{G}^*}$, and set
\begin{align*}
	M = \min_{\mathcal{G}^* \in \mathcal{N}} \t^\top (\Id{p} - \G^*) \S_\infty^{-1} (\Id{p} - \G^*) \t.
\end{align*}
Observe that by the assumptions of Corollary \ref{cor:on-path}, $M>0$. For an arbitrary partition $\mathcal{G}^*$, let  $\mathcal{A}_{\succ \mathcal{G}^*}$ denote the set of all refinements of $\mathcal{G}^*$ in $\mathcal{A}$ whose cardinality is $|\mathcal{G}^*|+1$; the set $\mathcal{A}_{\succ \mathcal{G}^*}$ may of course be empty if $\mathcal{G}^*$ does not satisfy the \ref{ass:main}. Recall from \ref{subapp:sigma-tilde}, that $n\tilde\S( \Sh\mid \th, \mathcal{G}^\dagger, w ) \to \S_{\infty}$ element-wise in probability for any $G^\dagger \in \mathcal{A}_{\succ \mathcal{G}^*}$.

For arbitrary $\epsilon \in (0,M)$, (i) of Proposition \ref{prop:qform} implies that 
\begin{multline*}
\lim_{n\to \infty} \Pr \left[\bigcup_{\mathcal{G}^* \in \mathcal{A}} \bigcup_{\mathcal{G}^\dagger \in \mathcal{A}_{\succ \mathcal{G}^*}} \left\{ \frac{1}{n} \,\mathcal{L}\{\tt(\th,\mathcal{G}^*)\mid\th,\tilde\S( \Sh\mid \th, \mathcal{G}^\dagger, w )\} > \epsilon \right\} \right] \\
\le  \sum_{\mathcal{G}^* \in\mathcal{A}} \sum_{\mathcal{G}^\dagger \in \mathcal{A}_{\succ \mathcal{G}^*}}  \lim_{n\to \infty}\Pr \left[ \frac{1}{n} \, \mathcal{L}\{\tt(\th,\mathcal{G}^*)\mid\th,\tilde\S( \Sh\mid \th, \mathcal{G}^\dagger, w )\}> \epsilon \right] = 0.
\end{multline*}
Similarly, using (ii) of Proposition \ref{prop:qform}, we get
\begin{multline*}
\lim_{n\to \infty} \Pr \left[ \bigcup_{\mathcal{G}^* \in \mathcal{N}} \bigcup_{\mathcal{G}^\dagger \in \mathcal{A}_{\succ \mathcal{G}^*}} \left\{ \frac{1}{n} \,\mathcal{L} \{ \tt(\th,\mathcal{G}^*)\mid\th,\tilde\S( \Sh\mid \th, \mathcal{G}^\dagger, w )\} < M-\epsilon \right\} \right]\\
\le  \sum_{\mathcal{G}^*\in\mathcal{N}} \sum_{\mathcal{G}^\dagger \in \mathcal{A}_{\succ \mathcal{G}^*}}  \lim_{n\to \infty}\Pr \left[\left| \frac{1}{n} \, \mathcal{L}\{\tt(\th,\mathcal{G}^*)\mid\th,\tilde\S( \Sh\mid \th, \mathcal{G}^\dagger, w )\}  -\t^\top (\Id{p} - \G^*) \S_\infty^{-1} (\Id{p} - \G^*) \t \right| > \epsilon \right] = 0.
\end{multline*}

Now fix an arbitrary $\epsilon \in (0,M/2)$. Then,
\begin{multline*}
	\Pr(\mathcal{G}^* \not\in \mathcal{P}) \le \Pr \left[ \{ \mathcal{G}^* \not\in \mathcal{P}\}  \bigcap_{\mathcal{G}^* \in \mathcal{A}} \bigcap_{\mathcal{G}^\dagger \in \mathcal{A}_{\succ \mathcal{G}^*}} \left\{ \frac{1}{n} \, \mathcal{L}\{\tt(\th,\mathcal{G}^*)\mid\th,\tilde\S( \Sh\mid \th, \mathcal{G}^\dagger, w )\} \le \epsilon \right\} \right. \\
	 \left. \qquad \bigcap_{\mathcal{G}^* \in \mathcal{N}} \bigcap_{\mathcal{G}^\dagger \in \mathcal{A}_{\succ \mathcal{G}^*}} \left\{ \frac{1}{n} \, \mathcal{L}\{\tt(\th,\mathcal{G}^*)\mid\th,\tilde\S( \Sh\mid \th, \mathcal{G}^\dagger, w )\} \ge M-\epsilon \right\} \right]\\
		 \qquad\quad + \Pr\left [ \bigcup_{\mathcal{G}^* \in \mathcal{A}} \bigcup_{\mathcal{G}^\dagger \in \mathcal{A}_{\succ \mathcal{G}^*}} \left\{ \frac{1}{n} \, \mathcal{L}\{\tt(\th,\mathcal{G}^*)\mid\th,\tilde\S( \Sh\mid \th, \mathcal{G}^\dagger, w )\} > \epsilon \right\} \right]\\
	 \qquad\qquad + \Pr \left[ \bigcup_{\mathcal{G}^* \in \mathcal{N}} \bigcup_{\mathcal{G}^\dagger \in \mathcal{A}_{\succ \mathcal{G}^*}} \left \{ \frac{1}{n} \, \mathcal{L}\{\tt(\th,\mathcal{G}^*)\mid\th,\tilde\S( \Sh\mid \th, \mathcal{G}^\dagger, w )\} < M-\epsilon \right\} \right].
\end{multline*}

As $n \to \infty$, the last two terms tend to $0$ as shown above. The first term is however exactly $0$, because $\epsilon < M-\epsilon$ and hence partitions in $\mathcal{A}$ are necessarily selected in any iteration step $i \in \{d-1, \dots, |\mathcal{G}| + 1\}$ of~Algorithm \ref{algo:path}. When $i = |\mathcal{G}|$, the only partition in $\mathcal{A}$ that remains is $\mathcal{G}$, and so again it is selected, because $\mathcal{G}^{(i+1)} \in \mathcal{A}_{\succ \mathcal{G}}$ and  $\mathcal{G}$ is the only partition of cardinality $|\mathcal{G}|$ whose refinement is $\mathcal{G}^{(i+1)}$ and whose loss is at most $\epsilon$.
\end{proof}

\begin{proof}[Proof of Proposition \ref{prop:stop}]
Because $\mathcal{G}$ satisfies the \ref{ass:main}, $\G \t = \t$. Therefore,
\begin{align*}
(\th -\tt)^{\top} \Sh^{-1} (\th -\tt) & = \{\th - \t - \G(\th -\t)\}^{\top} \Sh^{-1} \{\th - \t - \G(\th -\t)\} = (\th - \t)^\top (\Id{p} - \G) \Sh^{-1} (\Id{p} - \G) (\th - \t).
\end{align*}
Because $\bss{A} \mapsto \bss{A}^{-1}$ is a continuous mapping on the space of nonsingular matrices \citep{Stewart:1969}, then as $n \to \infty$, $\Sh^{-1}/n$ converges element-wise to $\S_{\infty}^{-1}$ in probability. The asymptotic normality \eqref{eq:asstau} implies, together with Slutsky's lemma, that
$$
(\th -\tt)^{\top} \Sh^{-1} (\th -\tt) \rightsquigarrow \bss{V}^{\top} \S_{\infty}^{-1} \bss{V},
$$
where $\bss{V} \sim \mathcal{N}[\bss{0}_p,  (\Id{p} - \G) \S_{\infty}  (\Id{p} - \G)]$. Now set $\bss{M} = (\Id{p}-\G) \S_\infty (\Id{p}-\G)$ and $\bss{A} = \S^{-1}_\infty$. Following Lemma~\ref{lem:idem} and Remark \ref{rem:gamma-sigma-rel}, we easily get that
$\bss{M} \bss{A} = (\Id{p}-\G) \S_\infty (\Id{p}-\G) \S^{-1}_\infty = (\Id{p} - \G)$
is idempotent and has trace $\mathrm{tr}(\Id{p} - \G) = p - \mathrm{tr}(\G)$. An application of Theorem~8.6 in \cite{Severini:2005} thus yields that $\bss{V}^{\top} \S_{\infty}^{-1} \bss{V}$ is $ \chi_{p -\mathrm{tr}(\G)}^2$. Finally,
\begin{align*}
\mathrm{tr}(\G) = \sum_{r=1}^p \G_{rr} = \sum_{\ell = 1}^{L} \sum_{r \in \mathcal{B}_\ell} \frac{1}{|\mathcal{B}_{\ell}|} = L,
\end{align*}
and hence $(\th -\tt)^{\top} \Sh^{-1} (\th -\tt) \rightsquigarrow \chi_{p -L}^2$, as claimed.
\end{proof}

\subsection{Additional results for Section~\ref{sec:bijective}} \label{subapp:proof-T-inv}

In this section, we consider inverses of matrices in $\mathcal{T}_{\mathcal{G}}$. To this end, consider an arbitrary partition $\mathcal{G}$ of $\{1,\dots, d\}$ and introduce constraints on the diagonal entries of the matrices in $\mathcal{T}_{\mathcal{G}}$ through the set
\begin{align*}
	\mathcal{T}_{\mathcal{G}}^\dag = \{\R \in \mathcal{T}_{\mathcal{G}} : \text{for any $i,j$ such that } \D_{ij} = 1, \R_{ii} = \R_{jj}\}.
\end{align*}
A direct consequence of this definition is that $\mathcal{T}_{\mathcal{G}}^\dag \subset \mathcal{T}_{\mathcal{G}}$. Furthermore, $\T \in \mathcal{T}_{\mathcal{G}}^\dag$ since $\T_{ii} = 1$ for all $i \in \{1,\dots, d\}$.

\begin{lemma}\label{lem:T-inv}
If $\R \in \mathcal{T}_{\mathcal{G}}^\dag$ is invertible, then $\R^{-1} \in \mathcal{T}_{\mathcal{G}}^\dag$.
\end{lemma}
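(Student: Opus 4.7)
My plan is to recognize $\mathcal{T}_{\mathcal{G}}^\dag$ as the commutant (inside the space of symmetric matrices) of the group of permutations preserving the partition $\mathcal{G}$, and then exploit the fact that commutants are automatically closed under matrix inversion. To this end, let $\Pi_\mathcal{G}$ denote the group of permutations $\pi$ of $\{1,\dots,d\}$ such that, for every $j \in \{1,\dots,d\}$ and $k \in \{1,\dots,K\}$, $j \in \mathcal{G}_k$ if and only if $\pi(j) \in \mathcal{G}_k$, and let $\bss{P}_\pi$ be the corresponding $d \times d$ permutation matrix.

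The main step is to establish the characterization
\[
\mathcal{T}_{\mathcal{G}}^\dag = \{ \R \in \mathbb{R}^{d \times d} : \R = \R^\top \text{ and } \bss{P}_\pi \R = \R \bss{P}_\pi \text{ for every } \pi \in \Pi_\mathcal{G} \}.
\]
For the forward inclusion, given $\R \in \mathcal{T}_{\mathcal{G}}^\dag$ and $\pi \in \Pi_\mathcal{G}$, I would compare $\R_{ij}$ with $(\bss{P}_\pi \R \bss{P}_\pi^\top)_{ij} = \R_{\pi^{-1}(i)\pi^{-1}(j)}$: when $i = j$, $\pi^{-1}(i)$ lies in the same cluster as $i$, so the diagonal constraint built into $\mathcal{T}_{\mathcal{G}}^\dag$ gives $\R_{ii} = \R_{\pi^{-1}(i)\pi^{-1}(i)}$; when $i \neq j$, the unordered pairs $\{i,j\}$ and $\{\pi^{-1}(i), \pi^{-1}(j)\}$ index two elements of the same block $\mathcal{B}_\ell$ from Eq.~\eqref{eq:Bk}, so the off-diagonal constraint together with the symmetry of $\R$ forces the two entries to coincide. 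For the reverse inclusion, given $i\neq j$ in a common cluster, the transposition $(i\;j)$ lies in $\Pi_\mathcal{G}$, and commutation with $\bss{P}_{(i\;j)}$ yields $\R_{ii} = \R_{jj}$; given $r,s \in \mathcal{B}_\ell$, one can exhibit (exactly as in the constructions of permutations used in the proofs of Lemmas~\ref{lem:slice-cardinality} and~\ref{lem:inter}) a $\pi \in \Pi_\mathcal{G}$ sending the pair $\{i_r,j_r\}$ onto $\{i_s,j_s\}$, and commutation with $\bss{P}_\pi$ then gives $\R_{i_r j_r} = \R_{i_s j_s}$.

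With this characterization in hand, the conclusion follows automatically: if $\R$ is invertible and $\bss{P}_\pi \R = \R \bss{P}_\pi$ for every $\pi \in \Pi_\mathcal{G}$, then left- and right-multiplying by $\R^{-1}$ yields $\bss{P}_\pi \R^{-1} = \R^{-1} \bss{P}_\pi$, and $\R^{-1}$ inherits symmetry from $\R$, so $\R^{-1} \in \mathcal{T}_{\mathcal{G}}^\dag$. I expect the main obstacle to be the bookkeeping in the characterization step: matching the block definition via ordered pairs $(i_r, j_r)$ with $i_r < j_r$ to the permutation-invariance condition (which is stated for unordered pairs and requires the symmetry of $\R$ to handle both orientations within a block $\mathcal{B}_{k_1 k_2}$), and verifying that $\Pi_\mathcal{G}$ is rich enough to realize every intra-block identification. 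An alternative route, in the spirit of Proposition~\ref{prop:S-inv}, would invoke the Cayley--Hamilton theorem and show directly that $\mathcal{T}_\mathcal{G}^\dag$ is closed under forming symmetric products of its elements, but the commutant viewpoint makes closure under inversion essentially free once the group action is set up.
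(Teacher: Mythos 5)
Your argument is correct, but it follows a genuinely different route from the paper's. The paper proceeds as in Proposition~\ref{prop:S-inv}: by the Cayley--Hamilton theorem, $\R^{-1}$ is a polynomial in $\R$, so it suffices to show that $\mathcal{T}_{\mathcal{G}}^\dag$ is closed under (symmetric) products of its elements, which is verified entrywise by writing $[\R\bss{Q}]_{i_1j_1}=\sum_{k=1}^K\sum_{s\in\mathcal{G}_k}\R_{i_1 s}\bss{Q}_{s j_1}$ and matching the within-cluster sums for two index pairs drawn from the same clusters. You instead characterize $\mathcal{T}_{\mathcal{G}}^\dag$ as the symmetric part of the commutant of the permutation representation $\pi\mapsto\bss{P}_\pi$ of the group $\Pi_{\mathcal{G}}$ of cluster-preserving permutations; your handling of the diagonal constraint, of the orientation issue (ordered pairs $(i_r,j_r)$ versus unordered invariance, resolved by symmetry of $\R$), and of the existence of a $\pi\in\Pi_{\mathcal{G}}$ carrying $\{i_r,j_r\}$ onto $\{i_s,j_s\}$ covers exactly the bookkeeping that is needed, and at the $d\times d$ level these permutation constructions are elementary (much simpler than their $p\times p$ analogues in Lemmas~\ref{lem:slice-cardinality} and~\ref{lem:inter}). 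Once the characterization is in place, closure under inversion is immediate from $\bss{P}_\pi\R=\R\bss{P}_\pi\Rightarrow\bss{P}_\pi\R^{-1}=\R^{-1}\bss{P}_\pi$ together with $(\R^{-1})^\top=(\R^\top)^{-1}$. What your viewpoint buys is generality and economy: inversion, powers, Moore--Penrose inverses and any matrix function of $\R$ are handled at once, with the combinatorics isolated in a one-time invariance statement. What the paper's route buys is uniformity: it reuses verbatim the Cayley--Hamilton-plus-entrywise-computation template already developed for the harder Proposition~\ref{prop:S-inv}, with no group-theoretic setup. Your closing remark correctly identifies that alternative as essentially the paper's own proof.
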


\begin{proof}
Invoking once again the Cayley--Hamilton Theorem as in the proof of Proposition \ref{prop:S-inv}, we need only show that if $\R , \bss{Q} \in \mathcal{T}_{\mathcal{G}}^\dag$, then $\R \bss{Q} \in \mathcal{T}_{\mathcal{G}}^\dag$. To this end, fix arbitrary $k_1$, $k_2 \in \{1,\dots, K\}$ and let $(i_1,j_1),(i_2,j_2) \in \mathcal{G}_{k_1} \times \mathcal{G}_{k_2}$ be such that $i_1=j_1$ if and only if $i_2 = j_2$. Then for any $k \in\{1,\dots, K\}$,
\begin{align*}
	\sum_{s \in \mathcal{G}_k} \R_{i_1 s} \bss{Q}_{s j_1} = \sum_{s \in \mathcal{G}_k} \R_{i_2 s} \bss{Q}_{s j_2}.
\end{align*}
Therefore,
\begin{align*}
	[\R \bss{Q}]_{i_1j_1} &= \sum_{s = 1}^{d} \R_{i_1 s} \bss{Q}_{s j_1} = \sum_{k = 1}^{K} \sum_{s \in \mathcal{G}_k} \R_{i_1 s} \bss{Q}_{s j_1} = \sum_{k = 1}^{K} \sum_{s \in \mathcal{G}_k} \R_{i_2 s} \bss{Q}_{s j_2} = [\R \bss{Q}]_{i_2j_2},
\end{align*}
which proves the claim.
\end{proof}



\end{document}